\providecommand{\U}[1]{\protect\rule{.1in}{.1in}}
\newtheorem{theorem}{Theorem}[section]
\newtheorem{example}[theorem]{Example}
\newtheorem{lemma}[theorem]{Lemma}
\newtheorem{remark}[theorem]{Remark}
\numberwithin{equation}{section}
\newenvironment{Proof}[1][Proof]{\noindent\textbf{#1.} }{\ \rule{0.5em}{0.5em}}
\begin{document}

\title{Realization of the inverse scattering transform method for the Korteweg--de
Vries equation}
\author{Sergei M. Grudsky$^{1}$, Vladislav V. Kravchenko$^{2}$, Sergii M. Torba$^{2}$\\{\small $^{1}$ Departamento de Matem\'{a}ticas, Cinvestav, Cd. de M\'{e}xico,
07000 MEXICO}, \\{\small $^{2}$ Departamento de Matem\'{a}ticas, Cinvestav, Unidad
Quer\'{e}taro, }\\{\small Libramiento Norponiente \#2000, Fracc. Real de Juriquilla,
Quer\'{e}taro, Qro., 76230 MEXICO.}\\{\small e-mail: grudsky@math.cinvestav.mx, vkravchenko@math.cinvestav.edu.mx,
storba@math.cinvestav.edu.mx}}
\maketitle

\begin{abstract}
A method for practical realization of the inverse scattering transform method
for the Korteweg--de Vries equation is proposed. It is based on analytical
representations for Jost solutions and for integral kernels of transformation
operators obtained recently (\cite{Kr2019MMAS InverseAxis}, \cite{KrBook2020}%
). The representations have the form of functional series in which the first
coefficient plays a crucial role both in solving the direct scattering and the
inverse scattering problems. The direct scattering problem reduces to
computation of a number of the coefficients following a simple recurrent
integration procedure with a posterior calculation of scattering data \ by
well known formulas. The inverse scattering problem reduces to a system of
linear algebraic equations from which the first component of the solution
vector leads to the recovery of the potential. We prove the applicability of
the finite section method to the system of linear algebraic equations and
discuss numerical aspects of the proposed method. Numerical examples are
given, which reveal the accuracy and speed of the method.

\end{abstract}

\section{Introduction}

The inverse scattering transform method (ISTM) for solving nonlinear partial
differential equations was discovered in 1967 in the paper \cite{Gardner et al
1967} in application to the Korteweg--de Vries (KdV) equation, which can be
written in the form%
\[
u_{t}-6uu_{x}+u_{xxx}=0.
\]
The equation models shallow water waves and admits solitary wave solutions.

The method was further developed in 1972 in \cite{Zakharov et al 1972} in
application to another important equation of mathematical physics, the
nonlinear Schr\"{o}dinger equation, whose principal applications are to the
propagation of light in nonlinear optical fibers (see, e.g., \cite{Shaw}).

After those first works the ISTM encountered many other applications related
to a large number of nonlinear evolution partial differential equations of
mathematical physics. We refer to the books \cite{Ablowitz 2011},
\cite{Ablowitz Clarkson}, \cite{Ablowitz Segur} covering parts of this wide topic.

In theory the ISTM offers a beautiful way for solving the initial value
problem for the nonlinear equation. In order to obtain the solution at a given
time one has to solve successively a couple of spectral problems for a linear
ordinary differential equation (or system): a direct and an inverse scattering
problems. If one is interested in visualizing the evolution of the solution in
time, then not just one but a number of inverse scattering problems has to be solved.

The most important practical result of the ISTM until now is the possibility
to obtain solitonic solutions (see, e.g., \cite{Ablowitz 2011}, \cite{Ablowitz
Clarkson}, \cite{Ablowitz Segur}, \cite{Chadan}, \cite{Lamb}, \cite{Marchenko}%
). However, the ISTM is not widely used for numerical solution of the initial
value problems with general initial profiles (not necessarily leading to
solitonic solutions). The reason is the difficulty of solving the direct and
inverse scattering problems on the line. Instead of the ISTM many purely
numerical approaches have been developed for solving nonlinear evolutionary
equations, in particular, the KdV equation (see, e.g., \cite{Aksan et al},
\cite{Baker et al}, \cite{Courtes et al}, \cite{Dehghan et al}, \cite{Rashid}%
). They have important limitations. (a) Instead of considering problems on the
whole line with respect to the space variable, they solve problems on finite
intervals with some fictitious boundary conditions, and thus, in general, do
not serve for solving the initial value problem on the whole line. (b) In
order to obtain a picture of the solution at a given time the solution needs
to be computed for preceding times on a sufficiently fine mesh.

Recently, with the aid of several new ideas developed and implemented, there
appeared a feasible way for practical realization of the ISTM in all its might
and with all attractive features. This is the main subject of the present work.

It is well known that in principle solution of direct and inverse scattering
problems on the line for the one-dimensional Schr\"{o}dinger equation
\[
-y^{\prime\prime}+q(x)y=\rho^{2}y,\quad-\infty<x<\infty
\]
with $(1+\left\vert x\right\vert )q(x)\in L_{1}(-\infty,\infty)$, and $\rho
\in\mathbb{C}$, reduces to construction of the kernel of an appropriate
transmutation (transformation) operator, which maps the exponential function
into the Jost solution. When solving the direct problem, the knowledge of the
transmutation kernel implies the knowledge of the Jost solution for all values
of the spectral parameter, which in turn leads to the possibility of computing
scattering data. And solution of the inverse problem reduces to the
Gelfand-Levitan-Marchenko integral equation serving for calculation of the
transmutation kernel from given scattering data. The potential is recovered
from the transmutation kernel by one differentiation.

However, until recently the approach based on the transmutation kernel for
solving both direct and inverse scattering problems in general has not
resulted in an efficient numerical method. Construction of the transmutation
kernel by a known potential is quite a challenge as well as the recovery of
the transmutation kernel from the Gelfand-Levitan-Marchenko equation.

In the recent work \cite{Kr2019MMAS InverseAxis}, \cite{KrBook2020}, \cite{DKK
Jost} it was shown that solution of both direct and inverse scattering
problems can be reduced to calculation of the first coefficient of a
Fourier-Laguerre series expansion of the transmutation kernel. The kernel
itself is not needed. To the difference from the transmutation kernel which is
a function of two independent variables, the first coefficient is a function
of one variable. Its knowledge allows one to compute the Jost solution for all
values of the spectral parameter (which serves for solving the direct
problem), as well as to recover the potential (when solving the inverse
problem). More precisely, it is convenient to consider simultaneously two Jost
solutions, which we denote by $e(\rho,x)$ and $g(\rho,x)$, satisfying
corresponding asymptotic conditions at the \textquotedblleft$+$%
\textquotedblright\ and \textquotedblleft$-$\textquotedblright\ infinities,
respectively, as well as the first coefficients of Fourier-Laguerre series
expansions of two corresponding transmutation kernels. The first coefficients
we denote by $a_{0}(x)$ and $b_{0}(x)$, respectively. They can be written in
terms of the Jost solutions as follows%
\begin{equation}
a_{0}(x)=e(\frac{i}{2},x)e^{\frac{x}{2}}-1 \label{a0 Intro}%
\end{equation}
and
\begin{equation}
b_{0}(x)=g(\frac{i}{2},x)e^{-\frac{x}{2}}-1. \label{b0 Intro}%
\end{equation}
Thus, to know $a_{0}(x)$ is equivalent to know the Jost solution $e(\rho,x)$
for $\rho=\frac{i}{2}$ and similarly, to know $b_{0}(x)$ is equivalent to know
the Jost solution $g(\rho,x)$ for $\rho=\frac{i}{2}$. It is obvious that the
knowledge of $a_{0}(x)$ or $b_{0}(x)$ is sufficient for recovering $q$.
Indeed, from (\ref{a0 Intro}) and (\ref{b0 Intro}) it follows that%
\begin{equation}
q=\frac{a_{0}^{\prime\prime}-a_{0}^{\prime}}{a_{0}+1} \label{q=a0}%
\end{equation}
and
\begin{equation}
q=\frac{b_{0}^{\prime\prime}+b_{0}^{\prime}}{b_{0}+1}. \label{q=b0}%
\end{equation}
Moreover, in \cite{Kr2019MMAS InverseAxis}, \cite{KrBook2020} a system of
linear algebraic equations was derived for the coefficients of the
Fourier-Laguerre series of the transmutation kernel and thus $a_{0}(x)$ and
$b_{0}(x)$ can be obtained by solving the corresponding system of linear
algebraic equations.

Furthermore, as it was shown in \cite{DKK Jost} (see also \cite[Chapter
10]{KrBook2020}) the Jost solution $e(\rho,x)$ for all values of $\rho$,
$\operatorname{Im}\rho\geq0$ can be quite easily obtained from $a_{0}(x)$, and
similarly $g(\rho,x)$ from $b_{0}(x)$. Calculation of the Jost solutions from
$a_{0}(x)$ and $b_{0}(x)$ reduces to a recurrent integration procedure
\cite{DKK Jost}, \cite[Chapter 10]{KrBook2020}, which eventually leads to a
very convenient procedure for computing both the discrete scattering data and
the reflection coefficients by evaluating power series in a unitary disk of
the complex variable $z:=\left(  \frac{1}{2}+i\rho\right)  /\left(  \frac
{1}{2}-i\rho\right)  $. The overall approach based on the computation of the
functions $a_{0}(x)$ and $b_{0}(x)$ for solving both the direct and inverse
scattering problems leads to a direct, quite simple and efficient numerical
method for solving the Cauchy problem for the KdV equation. In the present
work we discuss this method in detail.

We provide a rigorous justification of the method which includes the
applicability of the finite section method, the analysis of the convergence
rate in dependence on the smoothness of the potential, the existence of the
derivatives $a_{0}^{\prime}$, $a_{0}^{\prime\prime}$ and $b_{0}^{\prime}$,
$b_{0}^{\prime\prime}$ (when $a_{0}$ and $b_{0}$ are computed as solutions of
the linear algebraic systems), which is required for recovering the potential
$q$ from (\ref{q=a0}) or (\ref{q=b0}), respectively, and discuss details of
its numerical implementation. Numerical examples are given, which reveal the
accuracy and speed of the method proposed.

\section{Representations for Jost solutions and their derivatives}

We consider the classical one-dimensional scattering problem. Given a real
valued function $q(x)$, $-\infty<x<\infty$ satisfying the condition
\begin{equation}
\int_{-\infty}^{\infty}\left(  1+\left\vert x\right\vert \right)  \left\vert
q(x)\right\vert dx<\infty, \label{cond q standard}%
\end{equation}
compute the corresponding scattering data which include a finite set of
eigenvalues and norming constants and a reflection coefficient. All the
scattering data are defined in terms of so-called Jost solutions of the
Schr\"{o}dinger equation%
\begin{equation}
-y^{\prime\prime}+q(x)y=\lambda y,\quad-\infty<x<\infty\label{Schr}%
\end{equation}
where $\lambda\in\mathbb{C}$ is a spectral parameter, $\lambda=\rho^{2}$,
$\rho\in\overline{\Omega}_{+}:=\left\{  \rho\in\mathbb{C}\mid\operatorname{Im}%
\rho\geq0\right\}  $. They are the unique solutions $e(\rho,x)$ and
$g(\rho,x)$ of (\ref{Schr}) satisfying the asymptotic relations
\begin{equation}
e(\rho,x)=e^{i\rho x}\left(  1+o(1)\right)  ,\quad e^{\prime}(\rho,x)=i\rho
e^{i\rho x}\left(  1+o(1)\right)  ,\quad x\rightarrow\infty, \label{asympt e}%
\end{equation}%
\[
g(\rho,x)=e^{-i\rho x}\left(  1+o(1)\right)  ,\quad g^{\prime}(\rho,x)=-i\rho
e^{-i\rho x}\left(  1+o(1)\right)  ,\quad x\rightarrow-\infty
\]
uniformly in $\overline{\Omega}_{+}$. When $\rho\in\mathbb{R}$ we have
\begin{equation}
e(-\rho,x)=\overline{e(\rho,x)},\quad g(-\rho,x)=\overline{g(\rho,x)}.
\label{Jost conjugation}%
\end{equation}

Under the condition (\ref{cond q standard}) the Jost solutions admit the
following integral representations%
\begin{equation}
e(\rho,x)=e^{i\rho x}+\int_{x}^{\infty}A(x,t)e^{i\rho t}dt \label{e}%
\end{equation}
and
\begin{equation}
g(\rho,x)=e^{-i\rho x}+\int_{-\infty}^{x}B(x,t)e^{-i\rho t}dt \label{g}%
\end{equation}
where $A$ and $B$ are real valued functions such that
\begin{equation}
A(x,x)=\frac{1}{2}\int_{x}^{\infty}q(t)dt, \label{A(x,x)}%
\end{equation}%
\[
B(x,x)=\frac{1}{2}\int_{-\infty}^{x}q(t)dt,
\]
$A(x,\cdot)\in L_{2}\left(  x,\infty\right)  $ and $B(x,\cdot)\in L_{2}\left(
-\infty,x\right)  $. More on the properties of the kernels $A$ and $B$ can be
found in \cite{Levitan}.

\begin{theorem}
\cite{Kr2019MMAS InverseAxis}, \cite{KrBook2020} The functions $A$ and $B$
admit the following series representations%
\begin{equation}
A(x,t)=\sum_{n=0}^{\infty}a_{n}(x)L_{n}(t-x)e^{\frac{x-t}{2}} \label{A series}%
\end{equation}
and
\begin{equation}
B(x,t)=\sum_{n=0}^{\infty}b_{n}(x)L_{n}(x-t)e^{-\frac{x-t}{2}}
\label{B series}%
\end{equation}
where $L_{n}$ stands for the Laguerre polynomial of order $n$.

For any $x\in\mathbb{R}$ fixed, the series converge in the norm of
$L_{2}\left(  x,\infty\right)  $ and $L_{2}\left(  -\infty,x\right)  $, respectively.

For the coefficients $a_{0}(x)$ and $b_{0}(x)$ the equalities are valid%
\begin{equation}
a_{0}(x)=e(\frac{i}{2},x)e^{\frac{x}{2}}-1 \label{a0}%
\end{equation}
and
\begin{equation}
b_{0}(x)=g(\frac{i}{2},x)e^{-\frac{x}{2}}-1. \label{b0}%
\end{equation}

The coefficients $a_{n}(x)$, $b_{n}(x)$, $n=0,1,\ldots$ are the unique
solutions of the equations%
\begin{equation}
La_{0}-a_{0}^{\prime}=q, \label{a0 eq}%
\end{equation}%
\begin{equation}
Lb_{0}+b_{0}^{\prime}=q, \label{b0 eq}%
\end{equation}%
\[
La_{n}-a_{n}^{\prime}=La_{n-1}+a_{n-1}^{\prime},\quad n=1,2,\ldots,
\]%
\[
Lb_{n}+b_{n}^{\prime}=Lb_{n-1}-b_{n-1}^{\prime},\quad n=1,2,\ldots,
\]
with $L:=\frac{d^{2}}{dx^{2}}-q(x)$, satisfying the boundary conditions
$a_{n}(x)=o(1)$, when $x\rightarrow+\infty$ and $b_{n}(x)=o(1)$, when
$x\rightarrow-\infty$.
\end{theorem}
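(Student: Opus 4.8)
The plan is to prove the three assertions in turn, using the completeness of the Laguerre functions, the integral representations (\ref{e})--(\ref{g}), the Schr\"{o}dinger equation (\ref{Schr}) itself, and the change of spectral variable $z=\bigl(\tfrac12+i\rho\bigr)/\bigl(\tfrac12-i\rho\bigr)$. For the series representation, fix $x$ and set $\widetilde A(x,s):=A(x,x+s)$; the substitution $s=t-x$ identifies $A(x,\cdot)\in L_{2}(x,\infty)$ with $\widetilde A(x,\cdot)\in L_{2}(0,\infty)$. Since the Laguerre functions $\ell_{n}(s):=L_{n}(s)e^{-s/2}$ form an orthonormal basis of $L_{2}(0,\infty)$, one has $\widetilde A(x,\cdot)=\sum_{n\ge0}a_{n}(x)\ell_{n}$ with $a_{n}(x)=\int_{0}^{\infty}\widetilde A(x,s)L_{n}(s)e^{-s/2}\,ds$, and undoing the substitution gives precisely (\ref{A series}), the series converging in $L_{2}(x,\infty)$ for each fixed $x$; (\ref{B series}) is obtained the same way after $s=x-t$. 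Because $L_{0}\equiv1$ we get $a_{0}(x)=\int_{x}^{\infty}A(x,t)e^{(x-t)/2}\,dt$, so putting $\rho=i/2$ in (\ref{e}) yields $e(\frac{i}{2},x)=e^{-x/2}\bigl(1+a_{0}(x)\bigr)$, which is (\ref{a0}); (\ref{b0}) follows analogously from (\ref{g}).

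For the equations and the recurrence I would pass to a generating function. The elementary identity $\int_{0}^{\infty}L_{n}(s)e^{-ps}\,ds=(p-1)^{n}/p^{n+1}$ with $p=\tfrac12-i\rho$, together with $1/(\tfrac12-i\rho)=1+z$ and $2i\rho=(z-1)/(z+1)$, turns the normalized Jost function $\Phi(\rho,x):=e(\rho,x)e^{-i\rho x}=1+\int_{0}^{\infty}\widetilde A(x,s)e^{i\rho s}\,ds$ into
\[
\Phi(\rho,x)=1+(1+z)\sum_{n\ge0}a_{n}(x)(-z)^{n},
\]
a power series convergent on $|z|<1$, the image of the open half-plane $\operatorname{Im}\rho>0$ on which $e(\rho,x)$ is analytic in $\rho$ (the interchange of sum and integral being legitimate there since $e^{i\rho\cdot}\in L_{2}(0,\infty)$). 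On the other hand $y=e^{i\rho x}\Phi$ solves (\ref{Schr}) if and only if $\Phi''+2i\rho\,\Phi'-q\Phi=0$, i.e. $(z+1)\Phi''+(z-1)\Phi'-(z+1)q\Phi=0$. Writing $w=-z$ and $\Phi=\sum_{n\ge0}c_{n}(x)w^{n}$ with $c_{0}=1+a_{0}$ and $c_{n}=a_{n}-a_{n-1}$ ($n\ge1$), and collecting the coefficient of $w^{n}$, one obtains $Lc_{0}-c_{0}'=0$ and $Lc_{n}-c_{n}'=Lc_{n-1}+c_{n-1}'$ for $n\ge1$. The first identity is exactly (\ref{a0 eq}); substituting $c_{n}=a_{n}-a_{n-1}$ into the remaining ones and carrying out a one-line induction starting from $La_{0}-a_{0}'=q$ converts them into the stated recurrence $La_{n}-a_{n}'=La_{n-1}+a_{n-1}'$. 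The equation (\ref{b0 eq}) and the $b_{n}$-recurrence come from the same computation applied to (\ref{g}), or from the $a$-case applied to the reflected potential $x\mapsto q(-x)$.

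The decay $a_{n}(x)=o(1)$ as $x\to+\infty$ follows from $|a_{n}(x)|\le\|\widetilde A(x,\cdot)\|_{L_{2}(0,\infty)}=\|A(x,\cdot)\|_{L_{2}(x,\infty)}$ and the standard tail estimate for $A$ valid under (\ref{cond q standard}) (or is read off $\Phi(\frac{i}{2},x)$); analogously $b_{n}(x)=o(1)$ as $x\to-\infty$. For uniqueness, two solutions of the $n$-th equation satisfying the boundary condition differ by a function $u$ with $u''-u'-qu=0$ and $u(+\infty)=0$; the substitution $u=e^{x/2}v$ turns this into $-v''+qv=-\tfrac14v$, the Schr\"{o}dinger equation at $\rho=i/2$, whose minimal-growth solution at $+\infty$ is the Jost solution $e(\frac{i}{2},\cdot)\sim e^{-x/2}$ while every linearly independent solution grows like $e^{x/2}$; hence $u=e^{x/2}v$ tends to $0$ only if $v\equiv0$, so $u\equiv0$, and the $b$-case is symmetric. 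Existence of a solution with the prescribed asymptotics is then automatic, since the Laguerre coefficients are such solutions.

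The step I expect to be the main obstacle is the rigorous term-by-term $x$-differentiation underlying the generating-function argument: identifying $\Phi'$ and $\Phi''$ (the latter only almost everywhere when $q$ merely satisfies (\ref{cond q standard})) with $\sum c_{n}'(x)w^{n}$ and $\sum c_{n}''(x)w^{n}$, so that powers of $w$ may legitimately be matched. This requires control of the $a_{n}(x)$ and of their $x$-derivatives, i.e. enough regularity of the kernel $A$ in the variable $x$, which can be obtained by differentiating the Volterra integral equation satisfied by $A$ and using the Cauchy-type representation $c_{n}(x)=\frac{1}{2\pi i}\oint\Phi(w,x)w^{-n-1}\,dw$; alternatively one can sidestep it by constructing smooth coefficients $\widehat a_{n}$ directly from the recurrence (inverting the operator $u\mapsto Lu-u'$ by means of the decaying Jost mode), forming the series $\widehat A$, verifying that $\widehat A$ satisfies the same Goursat problem $\widehat A_{xx}-\widehat A_{tt}=q\widehat A$, $\widehat A(x,x)=\tfrac12\int_{x}^{\infty}q$, $\widehat A(x,\cdot)\to0$ as the kernel in (\ref{e}), and invoking its unique solvability to conclude $\widehat A=A$ and $\widehat a_{n}=a_{n}$.
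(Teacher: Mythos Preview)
The paper does not contain its own proof of this theorem: the statement is attributed to \cite{Kr2019MMAS InverseAxis} and \cite{KrBook2020} and is quoted without argument, so there is no proof in the paper to compare yours against.

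On its own merits your outline is sound and the algebra checks out. The Laguerre expansion of $A(x,x+s)$ in $L_{2}(0,\infty)$ is the natural way to obtain (\ref{A series}); the identification of $a_{0}$ via $\rho=i/2$ in (\ref{e}) is immediate. Your generating-function derivation of the recurrence is correct: with $w=-z$ the equation $(z+1)\Phi''+(z-1)\Phi'-(z+1)q\Phi=0$ becomes $(1-w)\Phi''-(1+w)\Phi'-(1-w)q\Phi=0$, and matching powers of $w$ gives $Lc_{0}-c_{0}'=0$ and $Lc_{n}-c_{n}'=Lc_{n-1}+c_{n-1}'$; the induction converting this to $La_{n}-a_{n}'=La_{n-1}+a_{n-1}'$ works exactly as you indicate (at $n=1$ both sides equal $q+2a_{0}'$, and the inductive step telescopes). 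The uniqueness argument via $u=e^{x/2}v$ is also correct, since $e^{x/2}e(\tfrac{i}{2},x)\to1$ and the second fundamental solution contributes a term $\sim e^{x}$.

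The only point where your sketch is genuinely incomplete is the one you flag yourself: under the bare hypothesis (\ref{cond q standard}) you need to justify that the Laguerre coefficients $a_{n}(x)$ are twice differentiable in $x$ and that the series for $\Phi$, $\Phi'$, $\Phi''$ may be differentiated term by term in $x$ for $|z|<1$. Your second proposed route---build $\widehat a_{n}$ from the recurrence via the decaying Green kernel for $L-\tfrac{d}{dx}$, assemble $\widehat A$, and invoke uniqueness of the Goursat problem for the wave equation $A_{xx}-A_{tt}=qA$---is the cleaner way around this and is essentially what the cited references do; your first route (Cauchy integral plus Volterra estimates on $\partial_{x}A$) also works but requires more bookkeeping. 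Either way this is a technical, not conceptual, gap.
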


In \cite{DKK Jost} (see also \cite{KrBook2020}) a recurrent integration
procedure for efficient computation of the coefficients of the series
(\ref{A series}) and (\ref{B series}) was derived. We give it in Appendix.

Note that due to the identity $L_{n}(0)=1$, for all $n=0,1,\ldots$, from
(\ref{A series}) and (\ref{B series}) two useful relations follow%
\[
\sum_{n=0}^{\infty}a_{n}(x)=A(x,x)=\frac{1}{2}\int_{x}^{\infty}q(t)dt,\quad
\sum_{n=0}^{\infty}b_{n}(x)=B(x,x)=\frac{1}{2}\int_{-\infty}^{x}q(t)dt.
\]

Denote
\begin{equation}
z:=\frac{\frac{1}{2}+i\rho}{\frac{1}{2}-i\rho}. \label{z}%
\end{equation}
Notice that this is a M\"{o}bius transformation of the upper halfplane of the
complex variable $\rho$ onto the unit disc $D=\left\{  z\in\mathbb{C}%
:\,\left\vert z\right\vert \leq1\right\}  $. In terms of the parameter $z$ the
following representations for the Jost solutions were obtained in
\cite{Kr2019MMAS InverseAxis} (see also \cite{DelgadoKhmelnytskayaKrHalfline}
and \cite{KrBook2020})%
\begin{equation}
e(\rho,x)=e^{i\rho x}\left(  1+\left(  z+1\right)  \sum_{n=0}^{\infty}\left(
-1\right)  ^{n}z^{n}a_{n}(x)\right)  \label{e via z}%
\end{equation}
and
\begin{equation}
g(\rho,x)=e^{-i\rho x}\left(  1+\left(  z+1\right)  \sum_{n=0}^{\infty}\left(
-1\right)  ^{n}z^{n}b_{n}(x)\right)  \label{g via z}%
\end{equation}
where the coefficients $a_{n}$ and $b_{n}$ can be constructed following the
recurrent integration procedure described in Appendix.

The coefficients $a_{n}$ and $b_{n}$ are real valued functions, $a_{n}%
(x)\rightarrow0$ when $x\rightarrow+\infty$ while $b_{n}(x)\rightarrow0$ when
$x\rightarrow-\infty$. For any $x\in\mathbb{R}$ the series $\sum_{n=0}%
^{\infty}a_{n}^{2}(x)$ and $\sum_{n=0}^{\infty}b_{n}^{2}(x)$ converge, which
is a consequence of the fact that they are Fourier coefficients with respect
to the system of Laguerre polynomials of corresponding functions from
$L_{2}\left(  0,\infty;e^{-t}\right)  $ (see
\cite{DelgadoKhmelnytskayaKrHalfline}). Hence for any $x\in\mathbb{R}$ the
functions $e(\rho,x)e^{-i\rho x}-1$ and $g(\rho,x)e^{i\rho x}-1$ belong to the
Hardy space $H^{2}(D)$ as functions of $z$ (this is due to the well known
result from complex analysis, see, e.g., \cite[Theorem 17.12]{Rudin}).

Analogous representations were obtained for the derivatives with respect to
$x$ of the Jost solutions under an additional assumption of the absolute
continuity of the potential $q$ (see \cite{DelgadoKhmelnytskayaKrHalfline}),
\begin{equation}
e^{\prime}(\rho,x)=e^{i\rho x}\left(  \frac{z-1}{2(z+1)}-\frac{1}{2}\int
_{x}^{\infty}q(t)dt+(z+1)\sum_{n=0}^{\infty}\left(  -1\right)  ^{n}z^{n}%
d_{n}(x)\right)  \label{e prime d}%
\end{equation}

\begin{equation}
g^{\prime}(\rho,x)=e^{-i\rho x}\left(  -\frac{z-1}{2(z+1)}+\frac{1}{2}%
\int_{-\infty}^{x}q(t)dt+(z+1)\sum_{n=0}^{\infty}\left(  -1\right)  ^{n}%
z^{n}c_{n}(x)\right)  \label{g prime c}%
\end{equation}
where the coefficients $\left\{  d_{n}\right\}  $ and $\left\{  c_{n}\right\}
$ are obtained from the coefficients $\left\{  a_{n}\right\}  $ and $\left\{
b_{n}\right\}  $, respectively, with the aid of the relations given in
Appendix. Here we emphasize that the whole procedure of computation of the
four sets of coefficients requires the computation of the Jost solutions
$e(\frac{i}{2},x)$, $g(\frac{i}{2},x)$ and their first derivatives. All
subsequent operations besides arithmetic operations involve the integration
only, which from a practical viewpoint makes the procedure convenient for
efficient computation. For the representations (\ref{e via z}%
)-(\ref{g prime c}) truncation error estimates were obtained in
\cite{DelgadoKhmelnytskayaKrHalfline}.

\begin{remark}
From (\ref{a0}) and (\ref{b0}) it follows that the knowledge of $a_{0}$ or
$b_{0}$ is sufficient for recovering $q$. Namely,
\begin{equation}
q=\frac{a_{0}^{\prime\prime}-a_{0}^{\prime}}{a_{0}+1} \label{q=a}%
\end{equation}
and
\begin{equation}
q=\frac{b_{0}^{\prime\prime}+b_{0}^{\prime}}{b_{0}+1}. \label{q=b}%
\end{equation}
Indeed, from (\ref{a0}) we have that $e(\frac{i}{2},x)=e^{-\frac{x}{2}}\left(
a_{0}(x)+1\right)  $. Differentiating twice this equality and recalling that
$e(\frac{i}{2},x)$ is a solution of the equation $-y^{\prime\prime
}+q(x)y=-\frac{1}{4}y$ gives us (\ref{q=a}). The equality (\ref{q=b}) is
obtained analogously.
\end{remark}

\section{Scattering data \label{section-Background}}

We refer to the books \cite{Chadan, Levitan, Marchenko, Yurko2007} for the
theory of the scattering problem. Here we introduce only the definitions
indispensable for the present work.

Consider the following scattering amplitudes (elements of the scattering
matrix) $a(\rho)=-\frac{1}{2i\rho}W\left[  e(\rho,x),g(\rho,x)\right]  $ and
$b(\rho)=\frac{1}{2i\rho}W\left[  e(\rho,x),g(-\rho,x)\right]  $, $\rho
\in\mathbb{R}$ where $W$ denotes the Wronskian. Notice that since the
Wronskian of any pair of solutions of (\ref{Schr}) is constant, we may
consider it at $x=0$. Thus,
\[
a(\rho)=-\frac{1}{2i\rho}W\left[  e(\rho,0),g(\rho,0)\right]  \quad\rho
\in\overline{\Omega_{+}},
\]
and
\[
b(\rho)=\frac{1}{2i\rho}W\left[  e(\rho,0),g(-\rho,0)\right]  ,\quad\rho
\in\mathbb{R}.
\]
Notice that the second expression is well defined for real values of $\rho$
only, because when $\operatorname{Im}\rho>0$, a solution of (\ref{Schr})
behaving like $e^{-i\rho x}$ when $x\rightarrow-\infty$ is not unique.

The reflection coefficients (right (+) and left (-)) have the form \cite[p.
210]{Yurko2007}
\begin{equation}
s^{\pm}(\rho):=\mp\frac{b(\mp\rho)}{a(\rho)},\quad\rho\in\mathbb{R}.
\label{reflection}%
\end{equation}

We recall that the eigenvalues of (\ref{Schr}), if they exist, form a finite
set of negative numbers $\lambda_{k}=\rho_{k}^{2}=\left(  i\tau_{k}\right)
^{2}$, $0<\tau_{1}<\ldots<\tau_{N}$. The corresponding norming constants are
introduced as follows%
\[
\alpha_{k}^{+}:=\left(  \int_{-\infty}^{\infty}e^{2}(\rho_{k},x)dx\right)
^{-1}\quad\text{and\quad}\alpha_{k}^{-}:=\left(  \int_{-\infty}^{\infty}%
g^{2}(\rho_{k},x)dx\right)  ^{-1}.
\]
For their computation another formula can be used \cite[p. 215]{Yurko2007}
\begin{equation}
\alpha_{k}^{\pm}=\frac{\left(  d_{k}\right)  ^{\pm1}}{ia^{\prime}(\rho_{k})}
\label{akpha_n}%
\end{equation}
where the constants $d_{k}$ are defined from the relation $d_{k}=g(\rho
_{k},x)/e(\rho_{k},x)=g(\rho_{k},0)/e(\rho_{k},0)$ (notice that when
$\lambda_{k}=\rho_{k}^{2}$ is an eigenvalue, the Jost solutions $e(\rho
_{k},x)$ and $g(\rho_{k},x)$ are necessarily linearly dependent).

The sets
\[
J^{\pm}=\left\{  s^{\pm}(\rho),\,\rho\in\mathbf{R};\,\lambda_{k},\,\alpha
_{k}^{\pm},\,k=\overline{1,N}\right\}
\]
are called the right and left scattering data, respectively.

To solve the direct scattering problem means to find $J^{+}$ or $J^{-}$ by a
given potential $q$ satisfying (\ref{cond q standard}). The inverse scattering
problem consists in recovering the potential $q$ from a given set of
scattering data $J^{+}$ or $J^{-}$.

\section{Inverse scattering transform method}

Here we briefly recall the inverse scattering transform method for solving the
Cauchy problem for the KdV equation%
\begin{equation}
u_{t}-6uu_{x}+u_{xxx}=0. \label{KdV}%
\end{equation}
Consider (\ref{KdV}) subject to the initial condition
\begin{equation}
u(x,0)=q(x),\quad x\in\left(  -\infty,\infty\right)  , \label{init cond}%
\end{equation}
where $q$ is assumed to be a given real valued function satisfying
(\ref{cond q standard}). This Cauchy problem is uniquely solvable. Then in
order to obtain the solution $u(x,t)$ at any prescribed instant $t$, according
to ISTM, the following steps should be performed.

1) Compute the set $J^{+}$ or $J^{-}$.

2) Apply the evolution law to the scattering data as follows%
\[
J^{\pm}(t)=\left\{  s^{\pm}\left(  \rho,t\right)  =s^{\pm}\left(  \rho\right)
e^{\pm8i\rho^{3}t},\,\rho\in\mathbb{R};\,\lambda_{k}(t)=\lambda_{k}=-\tau
_{k}^{2},\,\alpha_{k}^{\pm}(t)=\alpha_{k}^{\pm}e^{\pm8\tau_{k}^{3}t}\right\}
.
\]
That is, the eigenvalues $\lambda_{k}$ do not change for $t\geq0$ while to
compute the norming constants and the reflection coefficients for $t>0$ one
needs simply multiply their values corresponding to $t=0$ (or to $q(x)$) by a
corresponding exponential factor.

3) Solve the inverse scattering problem for $J^{+}(t)$ or $J^{-}(t)$. The
recovered potential is precisely $u(x,t)$.

\section{Representations for scattering
data\label{Sect Representations scattering data}}

Computation of the discrete spectral data (eigenvalues and norming constants)
requires considering purely imaginary values of $\rho$, such that $\rho=i\tau
$, $\tau>0$, while for computing reflection coefficients $s^{\pm}(\rho)$ one
needs to consider all real values of $\rho$. In terms of the parameter $z$
defined by (\ref{z}) this means that computation of discrete spectral data
should be performed on the interval $\left(  -1,1\right)  $ (when
$\lambda\rightarrow-\infty$ one has that $z\rightarrow-1$, and $\lambda=0$
corresponds to $z=1$), while computation of the reflection coefficients is
done on the unitary circle $z=e^{i\theta}$ with $\theta\in\left(  -\pi
,\pi\right)  $ ($\theta=\pm\pi$ corresponds to $\rho=\pm\infty$).

\subsection{Discrete spectral data}

Having obtained the coefficients $\left\{  a_{n}\right\}  $, $\left\{
b_{n}\right\}  $, $\left\{  c_{n}\right\}  $ and $\left\{  d_{n}\right\}  $ as
explained in Appendix, with the aid of the series representations
(\ref{e via z})-(\ref{g prime c}) it is easy to compute the square roots
$\rho_{k}$ of the eigenvalues as zeros of the function $a(\rho)$. We have
\[
-2i\rho a(\rho)=W\left[  e(\rho,0),g(\rho,0)\right]  =e(z)G(z)-E(z)g(z),
\]
where
\begin{equation}
e(z)=1+\left(  z+1\right)  \sum_{n=0}^{\infty}\left(  -1\right)  ^{n}%
z^{n}a_{n}(0), \label{e(z)}%
\end{equation}%
\begin{equation}
g(z)=1+\left(  z+1\right)  \sum_{n=0}^{\infty}\left(  -1\right)  ^{n}%
z^{n}b_{n}(0), \label{g(z)}%
\end{equation}%
\begin{equation}
E(z)=\frac{z-1}{2(z+1)}-\frac{1}{2}\int_{0}^{\infty}q(t)dt+(z+1)\sum
_{n=0}^{\infty}\left(  -1\right)  ^{n}z^{n}d_{n}(0) \label{E(z)}%
\end{equation}
and
\begin{equation}
G(z)=-\frac{z-1}{2(z+1)}+\frac{1}{2}\int_{-\infty}^{0}q(t)dt+(z+1)\sum
_{n=0}^{\infty}\left(  -1\right)  ^{n}z^{n}c_{n}(0). \label{G(z)}%
\end{equation}
Thus, computation of the eigenvalues reduces to computation of zeros $z_{k}$
of the function
\[
\Phi(z)=e(z)G(z)-E(z)g(z)
\]
on the interval $(-1,1)$, and
\[
\lambda_{k}=-\left(  \frac{z_{k}-1}{2\left(  z_{k}+1\right)  }\right)  ^{2}.
\]

For computing the norming constants $\alpha_{k}^{\pm}$ it is convenient to use
(\ref{akpha_n}). Here again, to find $d_{k}$ one may compute $e\left(
\rho_{k},0\right)  $ and $g\left(  \rho_{k},0\right)  $ by (\ref{e via z}) and
(\ref{g via z}), and then $d_{k}=g\left(  z_{k}\right)  /e\left(
z_{k}\right)  $.

To obtain $a^{\prime}(\rho_{k})$ which is required by (\ref{akpha_n}), we
notice that
\begin{equation*}
a^{\prime}(\rho)    =-\frac{a(\rho)}{\rho}-\frac{1}{2i\rho}W^{\prime}\left[
e(\rho,0),g(\rho,0)\right]  =2i\left(  \frac{z+1}{z-1}\right)  ^{2}\Phi(z)-\frac{z+1}{z-1}\Phi^{\prime
}(z)\frac{dz}{d\rho}.
\end{equation*}
Since $\frac{dz}{d\rho}=i\left(  z+1\right)  ^{2}$, we obtain
\[
a^{\prime}(\rho)=2i\left(  \frac{z+1}{z-1}\right)  ^{2}\Phi(z)-i\frac{\left(
z+1\right)  ^{3}}{z-1}\Phi^{\prime}(z)
\]
where $\Phi^{\prime}(z)$ is easily calculated by using the representations
(\ref{e(z)})-(\ref{G(z)}).

\subsection{Reflection coefficients}

Computation of the reflection coefficients is performed according to formula
(\ref{reflection}), from which by taking into account (\ref{Jost conjugation})
we obtain%
\[
s^{+}(\rho)=\frac{W\left[  \overline{e(\rho,0)},g(\rho,0)\right]  }{W\left[
e(\rho,0),g(\rho,0)\right]  }%
\]
and
\[
s^{-}(\rho)=-\frac{W\left[  e(\rho,0),\overline{g(\rho,0)}\right]  }{W\left[
e(\rho,0),g(\rho,0)\right]  }.
\]
In terms of the functions (\ref{e(z)})-(\ref{G(z)}) we have
\[
s^{+}(\rho)=\frac{e(\overline{z})G(z)-E(\overline{z})g(z)}{e(z)G(z)-E(z)g(z)},\qquad \text{where $z$ is given by \eqref{z},}
\]
and
\[
s^{-}(\rho)=-\frac{e(z)G(\overline{z})-E(z)g(\overline{z})}{e(z)G(z)-E(z)g(z)}%
.
\]
These expressions have to be computed for $z$ running along the unitary
circle, namely for $z=e^{i\theta}$, $\theta\in(-\pi,\pi)$ which corresponds to
$\rho$ running along the real axis from $-\infty$ to $+\infty$.

\section{Inverse scattering}

\subsection{System of linear algebraic equations for the Fourier-Laguerre
coefficients}

The transmutation operator kernels $A$ and $B$ satisfy the corresponding
Marchenko equations, often called Gelfand-Levitan-Marchenko equations (see,
e.g., \cite{Chadan}, \cite{Levitan}, \cite{Yurko2007})%
\begin{equation}
F^{+}(x+y)+A(x,y)+\int_{x}^{\infty}A(x,t)F^{+}(t+y)dt=0,\quad y>x \label{GLMA}%
\end{equation}
and
\begin{equation}
F^{-}(x+y)+B(x,y)+\int_{-\infty}^{x}B(x,t)F^{-}(t+y)dt=0,\quad y<x
\label{GLMB}%
\end{equation}
where
\begin{equation}
F^{\pm}(x)=\sum_{k=1}^{N}\alpha_{k}^{\pm}e^{\mp\tau_{k}x}+\frac{1}{2\pi}%
\int_{-\infty}^{\infty}s^{\pm}\left(  \rho\right)  e^{\pm i\rho x}d\rho.
\label{F}%
\end{equation}
Equations (\ref{GLMA}) and (\ref{GLMB}) are uniquely solvable. However, the
method proposed in the present work does not involve their solution. In fact,
we use equations (\ref{GLMA}) and (\ref{GLMB}) in order to obtain a system of
linear algebraic equations for the coefficients $\left\{  a_{n}\right\}
_{n=0}^{\infty}$ and $\left\{  b_{n}\right\}  _{n=0}^{\infty}$, respectively,
which allows us to compute $a_{0}$ and $b_{0}$ and hence to recover $q$ by
(\ref{q=a}) and (\ref{q=b}).

Let us introduce the following notation%
\begin{equation}
A_{mn}(x):=(-1)^{m+n}\left(  \sum_{k=1}^{N}\alpha_{k}^{+}e^{-2\tau_{k}x}%
\frac{\left(  \frac{1}{2}-\tau_{k}\right)  ^{m+n}}{\left(  \frac{1}{2}%
+\tau_{k}\right)  ^{m+n+2}}+\frac{1}{2\pi}\int_{-\infty}^{\infty}s^{+}\left(
\rho\right)  e^{2i\rho x}\frac{\left(  \frac{1}{2}+i\rho\right)  ^{m+n}%
}{\left(  \frac{1}{2}-i\rho\right)  ^{m+n+2}}d\rho\right)  , \label{alpha mn}%
\end{equation}%
\begin{equation}
r_{m}(x):=(-1)^{m+1}\left(  \sum_{k=1}^{N}\alpha_{k}^{+}e^{-2\tau_{k}x}%
\frac{\left(  \frac{1}{2}-\tau_{k}\right)  ^{m}}{\left(  \frac{1}{2}+\tau
_{k}\right)  ^{m+1}}+\frac{1}{2\pi}\int_{-\infty}^{\infty}s^{+}\left(
\rho\right)  e^{2i\rho x}\frac{\left(  \frac{1}{2}+i\rho\right)  ^{m}}{\left(
\frac{1}{2}-i\rho\right)  ^{m+1}}d\rho\right)  , \label{r m}%
\end{equation}%
\begin{equation}
B_{mn}(x):=(-1)^{m+n}\left(  \sum_{k=1}^{N}\alpha_{k}^{-}e^{2\tau_{k}x}%
\frac{\left(  \frac{1}{2}-\tau_{k}\right)  ^{m+n}}{\left(  \frac{1}{2}%
+\tau_{k}\right)  ^{m+n+2}}+\frac{1}{2\pi}\int_{-\infty}^{\infty}s^{-}\left(
\rho\right)  e^{-2i\rho x}\frac{\left(  \frac{1}{2}+i\rho\right)  ^{m+n}%
}{\left(  \frac{1}{2}-i\rho\right)  ^{m+n+2}}d\rho\right)  , \label{beta mn}%
\end{equation}%
\begin{equation}
s_{m}(x):=(-1)^{m+1}\left(  \sum_{k=1}^{N}\alpha_{k}^{-}e^{2\tau_{k}x}%
\frac{\left(  \frac{1}{2}-\tau_{k}\right)  ^{m}}{\left(  \frac{1}{2}+\tau
_{k}\right)  ^{m+1}}+\frac{1}{2\pi}\int_{-\infty}^{\infty}s^{-}\left(
\rho\right)  e^{-2i\rho x}\frac{\left(  \frac{1}{2}+i\rho\right)  ^{m}%
}{\left(  \frac{1}{2}-i\rho\right)  ^{m+1}}d\rho\right)  . \label{s m}%
\end{equation}

\begin{theorem}
\cite{Kr2019MMAS InverseAxis}, \cite{KrBook2020} The coefficients $\left\{
a_{n}\right\}  _{n=0}^{\infty}$ and $\left\{  b_{n}\right\}  _{n=0}^{\infty}$
from (\ref{A series}) and (\ref{B series}), respectively, satisfy the
following systems of linear algebraic equations
\begin{equation}
a_{m}(x)+\sum_{n=0}^{\infty}a_{n}(x)A_{mn}(x)=r_{m}(x),\quad m=0,1,\ldots
\label{syst A}%
\end{equation}
and
\begin{equation}
b_{m}(x)+\sum_{n=0}^{\infty}b_{n}(x)B_{mn}(x)=s_{m}(x),\quad m=0,1,\ldots.
\label{syst B}%
\end{equation}

For every $m$ the series in (\ref{syst A}) and (\ref{syst B}) converge pointwise.
\end{theorem}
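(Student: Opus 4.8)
The plan is to derive the linear system directly from the Marchenko equation \eqref{GLMA} by substituting the Fourier--Laguerre expansion \eqref{A series} for $A(x,t)$ and then projecting onto the Laguerre basis. Fix $x\in\mathbb{R}$. First I would change the variable in \eqref{GLMA} from $y$ to the shifted variable $\xi=y-x>0$, and insert $A(x,t)=\sum_{n\ge 0}a_n(x)L_n(t-x)e^{(x-t)/2}$ into both the free term $A(x,y)$ and the integral $\int_x^\infty A(x,t)F^+(t+y)\,dt$. Since for fixed $x$ the series converges in $L_2(x,\infty)$ and $F^+(t+y)\in L_2(x,\infty)$ in $t$ (by the standard decay properties of $F^+$ coming from \eqref{cond q standard}; see \cite{Levitan,Marchenko}), the integral may be interchanged with the sum, yielding $F^+(x+y+\xi\!-\!\xi)+\sum_n a_n(x)\big(L_n(\xi)e^{-\xi/2}+\int_0^\infty L_n(s)e^{-s/2}F^+(2x+s+\xi)\,ds\big)=0$ for all $\xi>0$, where I have substituted $s=t-x$.

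Next I would expand this identity in the orthonormal Laguerre system $\{L_m(\xi)e^{-\xi/2}\}_{m\ge0}$ on $L_2(0,\infty)$: multiply by $L_m(\xi)e^{-\xi/2}$ and integrate over $\xi\in(0,\infty)$. The term $a_n(x)L_n(\xi)e^{-\xi/2}$ contributes $a_n(x)\delta_{mn}$, which produces the $a_m(x)$ on the left of \eqref{syst A}. The free term $F^+(x+y)=F^+(2x+\xi)$ contributes $-r_m(x)$ after one checks, using the explicit form \eqref{F} of $F^+$, that $\int_0^\infty F^+(2x+\xi)L_m(\xi)e^{-\xi/2}\,d\xi$ equals the bracketed expression in \eqref{r m}; this reduces to the classical Laplace-type integrals $\int_0^\infty e^{-\tau\xi}L_m(\xi)e^{-\xi/2}\,d\xi=\frac{(\tau-1/2)^m}{(\tau+1/2)^{m+1}}$ (valid for $\operatorname{Re}\tau>-1/2$, hence for $\tau=\tau_k>0$ and, after a contour/Fourier argument, for $\tau=-i\rho$, $\rho\in\mathbb{R}$, giving the reflection-coefficient integral), together with the prefactor $e^{\mp\tau_k\cdot 2x}$ resp. $e^{\pm 2i\rho x}$ coming from evaluating $F^+$ at the argument $2x+\xi$, and the sign $(-1)^{m+1}$. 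The double integral $\int_0^\infty\!\int_0^\infty L_n(s)e^{-s/2}F^+(2x+s+\xi)L_m(\xi)e^{-\xi/2}\,ds\,d\xi$ contributes $A_{mn}(x)$ by the same two Laplace integrals applied once in $s$ and once in $\xi$, which is exactly why the exponents $m+n$ and $m+n+2$ appear in \eqref{alpha mn}; Fubini is justified since $F^+$ is bounded on $[2x,\infty)$ and decays, making the iterated integral absolutely convergent. Collecting the three contributions gives \eqref{syst A}. The system \eqref{syst B} for $\{b_n\}$ follows by the identical argument applied to \eqref{GLMB}, with the roles of $+\infty$ and $-\infty$ interchanged and the sign in the exponent of $F^-$ flipped.

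It remains to prove the pointwise convergence of the series $\sum_{n\ge0}a_n(x)A_{mn}(x)$ for each fixed $m$ (and likewise for \eqref{syst B}). Here I would argue by Cauchy--Schwarz: we already know $\sum_n a_n^2(x)<\infty$ from the remark after the theorem (the $a_n(x)$ are Fourier--Laguerre coefficients of an $L_2(0,\infty;e^{-t})$ function), so it suffices to show $\sum_n A_{mn}^2(x)<\infty$ for fixed $m$, i.e. that $(A_{mn}(x))_{n\ge0}$ is itself $\ell^2$. But by the construction above, $A_{mn}(x)$ is the $n$-th Fourier--Laguerre coefficient of the function $\xi\mapsto\int_0^\infty F^+(2x+\xi+s)L_m(\xi)e^{-\xi/2}\,d\xi$ viewed as a function of $s$ on $(0,\infty)$ with weight $e^{-s}$ — more precisely, $A_{mn}(x)=\int_0^\infty h_{m,x}(s)\,L_n(s)e^{-s}\,ds$ where $h_{m,x}(s)=e^{s/2}\int_0^\infty F^+(2x+\xi+s)L_m(\xi)e^{-\xi/2}\,d\xi$, and one checks $h_{m,x}\in L_2(0,\infty;e^{-s})$ using the decay of $F^+$; Bessel's inequality then gives $\sum_n A_{mn}^2(x)=\sum_n\big(\int h_{m,x}L_n e^{-s}\big)^2\le\|h_{m,x}\|^2_{L_2(e^{-s})}<\infty$. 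This is the step I expect to be the main technical obstacle, because it requires quantitative control of $F^+$ (and its translates) beyond mere membership in some $L^p$ — the soliton sum contributes exponentially decaying terms which are harmless, but the oscillatory integral part $\frac1{2\pi}\int s^+(\rho)e^{\pm i\rho x}d\rho$ must be shown to produce a kernel whose Laguerre moments are square-summable, which is where the hypothesis \eqref{cond q standard} (equivalently, the known regularity/decay of $F^\pm$, cf. \cite{Marchenko}) enters decisively. Once $\sum_n A_{mn}^2(x)<\infty$ is established, pointwise convergence of \eqref{syst A} and \eqref{syst B} is immediate, completing the proof.
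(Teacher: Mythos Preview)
Your derivation is correct and is in fact the route taken in the cited sources \cite{Kr2019MMAS InverseAxis}, \cite{KrBook2020}: substitute the Fourier--Laguerre expansion of $A(x,\cdot)$ into the Gelfand--Levitan--Marchenko equation \eqref{GLMA}, project onto the Laguerre basis, and reduce the resulting integrals to the classical Laplace transform $\int_0^\infty e^{-\tau\xi}L_m(\xi)e^{-\xi/2}\,d\xi=(\tau-\tfrac12)^m/(\tau+\tfrac12)^{m+1}$, which immediately produces the exact form of $r_m$ and $A_{mn}$. Your Cauchy--Schwarz argument for pointwise convergence (via $\{a_n\}\in\ell^2$ and $\{A_{mn}\}_n\in\ell^2$ for fixed $m$) is also the natural one, and the decay of $F^+$ guaranteed by \eqref{cond q standard} is indeed sufficient for the $L_2$ membership you need.

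The present paper, however, does not reproduce that argument. Instead, Section~6.2 gives a genuinely different derivation starting from the Hankel integral equation $(I+\mathbf{H}(\varphi))y=-\mathbf{H}(\varphi)(1)$ for $y(\rho)=e^{-i\rho x}e(\rho,x)-1$: the series representation \eqref{Jost representation} is inserted, the identity is transported to the unit circle via the M\"obius map $z(\rho)=(\tfrac12+i\rho)/(\tfrac12-i\rho)$, and orthogonality of $\{z^n\}$ on $\mathbb{T}$ together with a residue computation yields \eqref{Equation Hankel  3}, which is exactly \eqref{syst A}. Your approach is more elementary and closer to the physical-space Marchenko theory; the paper's Hankel-operator derivation is less direct but pays dividends later, since it identifies the coefficient matrix as a Hankel matrix generated by a symbol $\varphi$, giving immediate access to compactness, invertibility of $I+\mathbf{H}(\varphi)$, and the fine estimates on the entries $h_p$ (Theorems~\ref{Th estimate for hp} and~\ref{Th asymptotics hp}) that drive the convergence analysis of the finite section method in Section~\ref{Sect Convergence of method of reduction}.
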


The system \eqref{syst A}--\eqref{syst B} is well suited for approximate numerical solution by the finite section method, i.e., considering $m,n\le N$. Similarly to \cite{KShT2020}, \cite{KT2020} one can check that the truncated system coincides with that obtained by applying the Bubnov-Galerkin process to the integral equation \eqref{GLMA} with respect to the orthonormal system of Laguerre functions. Hence one obtains that the truncated system is uniquely solvable for all sufficiently large $N$, the approximate solution converges to the exact one as $N\to\infty$, the condition numbers of the coefficient matrices are uniformly bounded with respect to $N$, and the solution is stable with respect to small errors in the coefficients, see \cite{Mihlin}. We leave the details to the reader. However, we would like to point out that we do not need the whole solution of the system \eqref{syst A}--\eqref{syst B}, only the first component $a_0$ is sufficient to recover the potential.

In the following sections we present a different approach to the study of the system \eqref{syst A}--\eqref{syst B} based on the Hankel equation and Hankel operators.

\subsection{Derivation of systems (\ref{syst A}) and (\ref{syst B}) from
Hankel equation}

Systems (\ref{syst A}) and (\ref{syst B}) are derived from the
Gelfand-Levitan-Marchenko equations. On the other hand the inverse scattering
problem can be reduced to another integral equation (see \cite{GR2015SIAM})
which has the form
\begin{equation}
\left(  \left(  I+\mathbf{H}(\varphi)\right)  y\right)  \left(  \rho\right)
=-\mathbf{H}(\varphi)\left(  1\right)  \left(  \rho\right)
\label{Equation Hankel}%
\end{equation}
where
\begin{gather*}
y\left(  \rho\right)  =e^{-i\rho x}e\left(  \rho,x\right)  -1,\\
\left(  \mathbf{H}(\varphi)y\right)  \left(  \rho\right)  =\frac{1}{2\pi
i}\int_{-\infty}^{\infty}\frac{\varphi\left(  \tau\right)  y\left(
\tau\right)  }{\tau+\rho}d\tau,\quad\rho\in\mathbb{R}%
\end{gather*}
is a Hankel operator. Here the integral is understood in the sense of the
limit value%
\[
\int_{-\infty}^{\infty}\frac{\varphi\left(  \tau\right)  y\left(  \tau\right)
}{\tau+\rho}d\tau=\lim_{\varepsilon\rightarrow+0}\int_{-\infty}^{\infty}%
\frac{\varphi\left(  \tau\right)  y\left(  \tau\right)  }{\tau+\left(
\rho+i\varepsilon\right)  }d\tau.
\]
The symbol of the Hankel operator has the form%
\[
\varphi\left(  \tau\right)  =R(\tau)e^{8it\tau^{3}+2ix\tau}%
\]
where $R(\tau)$ is the reflection coefficient ($R(\tau)\equiv s^{-}(\tau)$).

Let us derive system (\ref{syst A}) from (\ref{Equation Hankel}). We have
\begin{equation}
y(\tau)=\left(  z(\tau)+1\right)  \sum_{n=0}^{\infty}\left(  -1\right)
^{n}z^{n}(\tau)a_{n}(x)=\frac{1}{\frac{1}{2}-i\tau}\sum_{n=0}^{\infty}\left(
-1\right)  ^{n}z^{n}(\tau)a_{n}(x). \label{Jost representation}%
\end{equation}
Substitution of (\ref{Jost representation}) into (\ref{Equation Hankel}) gives%
\begin{equation}
\sum_{n=0}^{\infty}\left(  -1\right)  ^{n}\frac{z^{n}(\rho)a_{n}(x)}%
{\frac{1}{2}-i\rho}+\sum_{n=0}^{\infty}\left(  -1\right)  ^{n}\frac{a_{n}%
(x)}{2\pi i}\int_{-\infty}^{\infty}\frac{\varphi\left(  \tau\right)
z^{n}(\tau)}{\left(  \frac{1}{2}-i\tau\right)  \left(  \tau+\rho\right)
}d\tau  =-\frac{1}{2\pi i}\int_{-\infty}^{\infty}\frac{\varphi\left(  \tau\right)
}{\left(  \tau+\rho\right)  }d\tau. \label{Equation Hankel  1}%
\end{equation}
Let us make use of the orthogonality of the system $\left\{  z^{n}\right\}
_{-\infty}^{\infty}$ on the unitary circle%
\[
\delta_{n,m}=\frac{1}{2\pi i}\int_{\mathbb{T}}z^{n}\overline{z}^{m}\frac
{dz}{z}=\left\{
\begin{array}
[c]{c}%
1,\quad n=m,\\
0,\quad n\neq m.
\end{array}
\right.
\]

Changing the variable
\[
z=z(\tau)=\frac{\frac{1}{2}+i\tau}{\frac{1}{2}-i\tau}:\quad\mathbb{R}%
\rightarrow\mathbb{T},
\]%
\[
dz=\frac{id\tau}{\left(  \frac{1}{2}-i\tau\right)  ^{2}}%
\]
we obtain%
\begin{equation*}
\delta_{n,m}    =\frac{1}{2\pi}\int_{-\infty}^{\infty}\frac{z^{n}%
(\tau)\overline{z}^{m}(\tau)}{\left(  \frac{1}{2}-i\tau\right)  ^{2}}%
\frac{d\tau}{z(\tau)}  =\frac{1}{2\pi}\int_{-\infty}^{\infty}\frac{z^{n-(m+1)}(\tau)}{\left(
\frac{1}{2}-i\tau\right)  ^{2}}d\tau.
\end{equation*}

Multiplying (\ref{Equation Hankel 1}) by
\[
\frac{(-1)^{m}}{2\pi}\frac{z^{-(m+1)}(\rho)}{\frac{1}{2}-i\rho}%
\]
and integrating over $\mathbb{R}$ leads to the equality%
\begin{align}
&  a_{m}(x)+\frac{(-1)^{m}}{2\pi i}\sum_{n=0}^{\infty}\left(  -1\right)
^{n}a_{n}(x)\int_{-\infty}^{\infty}\frac{\varphi\left(  \tau\right)
z^{n}(\tau)d\tau}{\left(  \frac{1}{2}-i\tau\right)  }\left(  \frac{1}{2\pi
}\int_{-\infty}^{\infty}\frac{z^{-(m+1)}(\rho)}{\left(  \frac{1}{2}%
-i\rho\right)  \left(  \tau+\rho\right)  }d\rho\right) \nonumber\\
&  =-\frac{(-1)^{m}}{2\pi i}\int_{-\infty}^{\infty}\varphi\left(  \tau\right)
\frac{d\tau}{2\pi}\left(  \int_{-\infty}^{\infty}\frac{z^{-(m+1)}(\rho
)}{\left(  \frac{1}{2}-i\rho\right)  \left(  \tau+\rho\right)  }d\rho\right)
,\quad m=0,1,\ldots. \label{Equation Hankel  2}%
\end{align}

The interior integral can be calculated with the aid of the residue theory by
considering $\mathbb{R}$ as a closed contour enclosing the lower half-plane.
The function
\[
\frac{z^{-(m+1)}(\rho)}{\frac{1}{2}-i\rho}%
\]
has no pole in the lower half-plane. Hence
\begin{equation*}
\lim_{\varepsilon\rightarrow0+}\frac{1}{2\pi}\int_{-\infty}^{\infty}%
\frac{z^{-(m+1)}(\rho)}{\left(  \frac{1}{2}-i\rho\right)  \left(  \tau
+\rho+i\varepsilon\right)  }d\rho   =\lim_{\varepsilon\rightarrow0+}\left(
\frac{-iz^{-(m+1)}\left(  -\tau-i\varepsilon\right)  }{\frac{1}{2}-i\left(
-\tau-i\varepsilon\right)  }\right)  =-i\frac{z^{-(m+1)}\left(  -\tau\right)  }{\frac{1}{2}+i\tau}.
\end{equation*}
Thus (\ref{Equation Hankel 2}) takes the form%
\begin{align*}
&  a_{m}(x)+\sum_{n=0}^{\infty}\frac{(-1)^{n+m+1}}{2\pi}a_{n}(x)\int_{-\infty
}^{\infty}\frac{\varphi\left(  \tau\right)  z^{n}(\tau)z^{-(m+1)}\left(
-\tau\right)  d\tau}{\left(  \frac{1}{2}-i\tau\right)  \left(  \frac{1}%
{2}+i\tau\right)  }\\
&  =\frac{(-1)^{m}}{2\pi}\int_{-\infty}^{\infty}\frac{\varphi\left(
\tau\right)  z^{-(m+1)}(-\tau)d\tau}{\frac{1}{2}+i\tau},\quad m=0,1,\ldots.
\end{align*}
Since $z(-\tau)=z^{-1}(\tau)$, we obtain%
\begin{align}
&  a_{m}(x)+\sum_{n=0}^{\infty}\frac{(-1)^{n+m+1}}{2\pi}a_{n}(x)\int_{-\infty
}^{\infty}\frac{\varphi\left(  \tau\right)  z^{n+m+1}(\tau)d\tau}{\left(
\frac{1}{2}-i\tau\right)  \left(  \frac{1}{2}+i\tau\right)  }\nonumber\\
&  =\frac{(-1)^{m}}{2\pi}\int_{-\infty}^{\infty}\frac{\varphi\left(
\tau\right)  z^{m+1}(\tau)d\tau}{\frac{1}{2}+i\tau},\quad m=0,1,\ldots.
\label{Equation Hankel  3}%
\end{align}

This system coincides with (\ref{syst A}). Clearly, instead of
(\ref{Equation Hankel 1}) we can consider an equation corresponding to the
function $g(\rho,x)$ and obtain (\ref{syst B}).

\begin{remark}
System (\ref{Equation Hankel 3}) represents an infinite system of linear
algebraic equations generated by a Hankel matrix, entries of which depend on
the sum of indices $(m+n)$. This is not accidental. In the work
\cite{GR2015SIAM} and in some other it is proved that the operator on the left
hand side of (\ref{Equation Hankel}) is invertible in the space $L_{2}%
^{+}\left(  \mathbb{R}\right)  $. Hence $y\left(  \rho\right)  \in L_{2}%
^{+}\left(  \mathbb{R}\right)  $. This implies that $y\left(  \rho\right)  $
admits a series representation of the form
\begin{equation}
y\left(  \rho\right)  :=y\left(  \rho,x\right)  =\sum_{n=0}^{\infty}%
\frac{a_{n}(x)z^{n}\left(  \rho\right)  }{\frac{1}{2}-i\rho} \label{repr y}%
\end{equation}
where
\[
\sum_{n=0}^{\infty}\left\vert a_{n}(x)\right\vert ^{2}<\infty.
\]
Indeed, $f(z)\in L_{2}^{+}\left(  \mathbb{T}\right)  $ is equivalent to the
equality
\[
f(z)=\sum_{n=0}^{\infty}f_{n}z^{n}%
\]
where $\sum_{n=0}^{\infty}\left\vert f_{n}\right\vert ^{2}<\infty$. The
operator of the change of the variable
\[
\left(  Uf\right)  \left(  \rho\right)  =\frac{1}{\frac{1}{2}-i\rho}f(z\left(
\rho\right)  ):\quad L_{2}\left(  \mathbb{T}\right)  \rightarrow L_{2}\left(
\mathbb{R}\right)
\]
is an isometry. Thus we obtain the representation (\ref{repr y}). It in turn
coincides with (\ref{Jost representation}) up to the factor $(-1)^{n}$, but is
obtained independently of Levin's representation. Thus, system
(\ref{Equation Hankel 3}) can be written in the form%
\[
\left(  I+K\right)  X=f
\]
where $K$ is a compact operator, and the operator $I+K$ is invertible.
\end{remark}

\section{Convergence of the finite section
method\label{Sect Convergence of method of reduction}}

Consider the Hankel operator defining equation (\ref{Equation Hankel}). It can
be written in the form%
\[
\left(  \mathbf{H}(\varphi)y\right)  \left(  \rho\right)  =\left(
JP^{-}\varphi y\right)  \left(  \rho\right)
\]
where the analytic projection operator $P^{-}$ has the form%
\[
\left(  P^{-}f\right)  \left(  \rho\right)  =\lim_{\varepsilon\rightarrow
0+}\frac{1}{2\pi i}\int_{-\infty}^{\infty}\frac{f\left(  \tau\right)  d\tau
}{\tau-\left(  \rho-i\varepsilon\right)  }%
\]
and $\left(  Jf\right)  \left(  \rho\right)  =f\left(  -\rho\right)  $ is the
reflection operator.

Obviously, if $h^{+}\left(  \rho\right)  \in H_{\infty}^{+}\left(
\mathbb{R}\right)  $, where $H_{\infty}^{+}\left(  \mathbb{R}\right)  $ is the
set of all bounded analytic functions in the upper half-plane, then
\[
\mathbf{H}(\varphi+h^{+})=\mathbf{H}(\varphi).
\]
Hence instead of the symbol $\varphi$ we can introduce a modified symbol
$\varphi+h^{+}$. The function $\varphi\left(  \rho\right)  $ has the form
\begin{equation}
\varphi\left(  \rho\right)  =R(\rho)e^{8it\rho^{3}+2ix\rho}.
\label{symbol phi}%
\end{equation}
At first sight the highly oscillating factor deteriorates the smoothness
properties of $\varphi\left(  \rho\right)  $, however the smoothness
properties of $\varphi+h^{+}$ improve (in comparison with $R(\rho)$). The
smoothness of the Hankel operator symbol is important since the higher it is,
the faster is the decay of the elements of the Hankel matrix.

Together with the symbol (\ref{symbol phi}) we consider its derivatives with
respect to $x$:
\begin{equation}
\frac{\partial^{j}}{\partial x^{j}}\varphi\left(  \rho,x\right)  =\left(
2i\rho\right)  ^{j}R(\rho)e^{8it\rho^{3}+2ix\rho}. \label{symbol der}%
\end{equation}

In \cite{GR2020Proc} it was proved that under the condition
\begin{equation}
q(x)\in L_{1}(\mathbb{R},\left(  1+\left\vert x\right\vert \right)  ^{\alpha
}),\quad\alpha\geq1 \label{cond q}%
\end{equation}
the reflection coefficient admits the representation%
\begin{equation}
R(\rho)=T_{+}(\rho)G_{-}(\rho) \label{R factorized}%
\end{equation}
with
\begin{equation}
T_{+}(\rho)\in H_{\infty}^{+}(\mathbb{R}) \label{T+}%
\end{equation}
and
\begin{equation}
G_{-}(\rho)=\frac{1}{2i\rho}\int_{0}^{\infty}q(s)e^{-2is\rho}ds+\frac
{1}{\left(  2i\rho\right)  ^{2}}\int_{0}^{\infty}Q^{\prime}(s)e^{-2is\rho}ds
\label{G-}%
\end{equation}
where $Q(s)$ is an absolutely continuous function satisfying the inequality%
\begin{equation}
\left\vert Q^{\prime}(s)\right\vert \leq C_{1}\left\vert q(s)\right\vert
+C_{2}\int_{s}^{\infty}\left\vert q(u)\right\vert du \label{Qprime}%
\end{equation}
where $C_{1}$ and $C_{2}$ are independent of $s$. Note that this inequality
implies the inclusion $Q^{\prime}\in L_{1}(\mathbb{R},\left(  1+\left\vert
x\right\vert \right)  ^{\alpha-1})$.

Denote%
\begin{equation}
S\left(  \rho,x\right)  =8t\rho^{3}+2x\rho. \label{s}%
\end{equation}

Let us show first that the finite section method is applicable to system
(\ref{Equation Hankel 3}) under a quite general assumption when the symbol
$\varphi\left(  \rho\right)  $ is a continuous on the closed real axis
$\overset{\cdot}{\mathbb{R}}$ function. The function of the form
(\ref{symbol phi}) fulfils this condition since $R(\pm\infty)=0$.

Let us introduce two families of projection operators on the space $L_{2}%
^{+}\left(  \mathbb{R}\right)  :=P^{+}\left(  L_{2}\left(  \mathbb{R}\right)
\right)  $.

Let
\[
f\left(  \rho\right)  =\sum_{k=0}^{\infty}\frac{f_{k}}{\frac{1}{2}-i\rho}%
z^{k}\left(  \rho\right)  ,\quad\sum_{k=0}^{\infty}\left\vert f_{k}\right\vert
^{2}<\infty
\]
belong to the class $L_{2}^{+}\left(  \mathbb{R}\right)  $. We recall that
$z\left(  \rho\right)  =\frac{\frac{1}{2}+i\rho}{\frac{1}{2}-i\rho}$. Then
\[
\left(  P_{n}f\right)  \left(  \rho\right)  :=\sum_{k=0}^{n-1}\frac{f_{k}%
}{\frac{1}{2}-i\rho}z^{k}\left(  \rho\right)  ,\quad\left(  Q_{n}f\right)
\left(  \rho\right)  :=\sum_{k=n}^{\infty}\frac{f_{k}}{\frac{1}{2}-i\rho}%
z^{k}\left(  \rho\right)  .
\]

\begin{theorem}
\label{Th reduction in L2}Let $\varphi\left(  \rho\right)  \in C\left(
\overset{\cdot}{\mathbb{R}}\right)  $ and the operator $\left(  I+\mathbf{H}%
(\varphi)\right)  $ be invertible in the space $L_{2}^{+}\left(
\mathbb{R}\right)  $. Then the finite section method is applicable to system
(\ref{Equation Hankel 3}) in the same space.
\end{theorem}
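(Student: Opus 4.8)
The plan is to read system \eqref{Equation Hankel 3} as the coordinate form, with respect to a fixed orthonormal basis of $L_{2}^{+}(\mathbb{R})$, of the operator equation $Ay=f$ with $A=I+\mathbf{H}(\varphi)$ and $f=-\mathbf{H}(\varphi)(1)$, and then to deduce applicability of the finite section method from the classical stability theory for compact perturbations of the identity. Concretely, I would establish: (i) $\mathbf{H}(\varphi)$ is compact on $L_{2}^{+}(\mathbb{R})$; (ii) the operators $P_{n}$ from the statement are the orthogonal truncation projections for the natural basis, they increase strongly to $I$, and the restriction of \eqref{Equation Hankel 3} to $0\le m,n\le n-1$ is precisely $P_{n}AP_{n}y^{(n)}=P_{n}f$; (iii) whenever $A=I+K$ is invertible with $K$ compact and $\{P_{n}\}$ is an increasing sequence of orthogonal projections tending strongly to $I$, the sequence $\{P_{n}AP_{n}\}$ is stable, which is equivalent to applicability of the finite section method.

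For (i), observe that on $L_{2}^{+}(\mathbb{R})$ one has $\mathbf{H}(\varphi)=JP^{-}M_{\varphi}P^{+}$, where $M_{\varphi}$ denotes multiplication by $\varphi$ and $J$ is the unitary reflection operator, so that $\mathbf{H}(\varphi)$ is compact if and only if the Hankel operator $P^{-}M_{\varphi}P^{+}$ is. Since $\varphi\in C(\overset{\cdot}{\mathbb{R}})$, writing $\varphi=\varphi(\infty)+\varphi_{0}$ with $\varphi_{0}\in C_{0}(\mathbb{R})$ we may drop the constant term (it yields the zero Hankel operator, because $P^{-}$ annihilates $L_{2}^{+}(\mathbb{R})$), and then $\varphi_{0}$ is the uniform limit on $\mathbb{R}$ of Schwartz-class symbols, whose Hankel operators are compact; since $\|P^{-}M_{g}P^{+}\|\le\|g\|_{\infty}$ and a norm limit of compact operators is compact, $\mathbf{H}(\varphi)$ is compact. (Equivalently this is Hartman's classical characterization of compact Hankel operators; for the particular KdV symbol \eqref{symbol phi} the compactness of $\mathbf{H}(\varphi)$ is already contained in \cite{GR2015SIAM}.) In any case $A=I+\mathbf{H}(\varphi)=I+K$ with $K$ compact, and $A$ is invertible by hypothesis.

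For (ii), the functions $e_{n}(\rho):=(2\pi)^{-1/2}z^{n}(\rho)/(\tfrac{1}{2}-i\rho)$, $n=0,1,\dots$, form an orthonormal basis of $L_{2}^{+}(\mathbb{R})$: up to the normalizing constant they are the images of the monomials $z^{n}$ under the isometric change of variable from $L_{2}(\mathbb{T})$ onto $L_{2}(\mathbb{R})$ used above, and their orthonormality is exactly the relation $\delta_{n,m}=\frac{1}{2\pi}\int_{-\infty}^{\infty}z^{n-(m+1)}(\tau)(\tfrac{1}{2}-i\tau)^{-2}\,d\tau$ verified before the derivation of \eqref{Equation Hankel 3}. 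The operators $P_{n}$ of the statement are the orthogonal projections onto $\operatorname{span}\{e_{0},\dots,e_{n-1}\}$, hence $P_{n}=P_{n}^{*}$, $P_{n}\le P_{n+1}$, and $P_{n}\to I$ strongly (equivalently $Q_{n}\to 0$ strongly). Expressing $y$ and $f=-\mathbf{H}(\varphi)(1)$ through their coefficients in this basis turns $Ay=f$ into \eqref{Equation Hankel 3}, and the $n$-th truncated system is nothing but the equation $P_{n}AP_{n}y^{(n)}=P_{n}f$ on $\operatorname{ran}P_{n}$.

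Finally, (iii) is the standard compactness argument. Since $K$ is compact and $P_{n},P_{n}^{*}\to I$ strongly, $\|K-P_{n}KP_{n}\|\to 0$. If $\{P_{n}AP_{n}\}$ were not stable, there would be a subsequence $n_{k}$ and unit vectors $x_{n_{k}}\in\operatorname{ran}P_{n_{k}}$ with $P_{n_{k}}Ax_{n_{k}}\to 0$; choosing a weakly convergent subsequence $x_{n_{k}}\rightharpoonup x$ we get $Kx_{n_{k}}\to Kx$ in norm, hence $x_{n_{k}}=P_{n_{k}}Ax_{n_{k}}-P_{n_{k}}Kx_{n_{k}}\to-Kx$ in norm, so $x=-Kx$, i.e. $Ax=0$ and $x=0$, contradicting $\|x_{n_{k}}\|=1$. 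Thus $\{P_{n}AP_{n}\}$ is stable, and together with the invertibility of $A$ this means exactly that the truncated systems are uniquely solvable for all sufficiently large $n$, that the norms of the inverses of the truncations are uniformly bounded, and that $y^{(n)}\to y$ in $L_{2}^{+}(\mathbb{R})$, i.e. the finite section method is applicable (see \cite{Mihlin}). The only step that is not pure bookkeeping is (i); once the compactness of $\mathbf{H}(\varphi)$ is in hand, (ii) is an identification of notation and (iii) is a textbook argument, so the main effort of the write-up goes into making (i) precise.
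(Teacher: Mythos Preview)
Your proof is correct and follows the same overall strategy as the paper: identify the truncated system with $P_{n}(I+\mathbf{H}(\varphi))P_{n}$, use compactness of $\mathbf{H}(\varphi)$ (the paper simply cites \cite{BottcherSilbermann2006} for this, while you supply a Hartman-type argument), and then invoke standard finite-section stability for operators of the form $I+K$ with $K$ compact. The only substantive difference is in how stability is established in step (iii): the paper argues constructively, showing that $P_{n}(I+\mathbf{H}(\varphi))P_{n}\cdot P_{n}A^{-1}P_{n}=P_{n}-P_{n}\mathbf{H}(\varphi)Q_{n}A^{-1}P_{n}$ and that the perturbation term tends to zero in norm (since a compact operator composed with a strongly null sequence goes to zero uniformly), which immediately gives an approximate inverse and uniform bounds; you instead use the weak-compactness contradiction argument. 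Both are textbook routes to the same conclusion---the paper's version yields the approximate inverse explicitly and makes the convergence estimate $\|R_{n}f\|\to 0$ a bit more transparent, while yours is slightly shorter and avoids manipulating the product of truncations. The paper itself remarks that the whole thing is covered by \cite[Proposition~2.4]{BottcherSilbermann1999}, so either presentation is acceptable.
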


\begin{proof}
Instead of the system (\ref{Equation Hankel 3}) consider its reduced analogue%
\begin{align*}
&  a_{m}(x)+\sum_{k=0}^{n-1}\frac{(-1)^{k+m+1}}{2\pi}a_{k}(x)\int_{-\infty
}^{\infty}\frac{\varphi\left(  \tau\right)  z^{k+m+1}(\tau)d\tau}{\frac{1}%
{4}+\tau^{2}}\\
&  =\frac{(-1)^{m}}{2\pi}\int_{-\infty}^{\infty}\frac{\varphi\left(
\tau\right)  z^{m+1}(\tau)d\tau}{\frac{1}{2}+i\tau},\quad m=0,1,\ldots,n-1.
\end{align*}
This system can be written in the form%
\begin{equation}
\left(  \left(  P_{n}+P_{n}\mathbf{H}(\varphi)\right)  P_{n}y\right)  \left(
\rho\right)  =-P_{n}\mathbf{H}(\varphi)\left(  1\right)  \left(  \rho\right)
. \label{reduced system in operators}%
\end{equation}
Denote $A^{-1}:=\left(  I+\mathbf{H}(\varphi)\right)  ^{-1}$ and consider the
product
\begin{align}
P_{n}\left(  I+\mathbf{H}(\varphi)\right)  P_{n}\cdot P_{n}A^{-1}P_{n}  &
=P_{n}\left(  I+\mathbf{H}(\varphi)\right)  A^{-1}P_{n}-P_{n}\left(
I+\mathbf{H}(\varphi)\right)  Q_{n}A^{-1}P_{n}\nonumber\\
&  =P_{n}-P_{n}\mathbf{H}(\varphi)Q_{n}A^{-1}P_{n}. \label{operators eq}%
\end{align}
The operator $Q_{n}A^{-1}P_{n}$ tends to zero in a strong sense when
$n\rightarrow\infty$. Since the operator $\mathbf{H}(\varphi)$ is compact
(see, e.g., \cite[p. 77]{BottcherSilbermann2006}), the operator $P_{n}%
\mathbf{H}(\varphi)Q_{n}A^{-1}P_{n}$ tends to zero in the operator sense.
Thus, for a large enough $n$ the operator on the right-hand side of
(\ref{operators eq}) is invertible. Hence the operator $P_{n}\left(
I+\mathbf{H}(\varphi)\right)  P_{n}$ is invertible in the space $L_{2}%
^{+(n)}\left(  \mathbb{R}\right)  :=P_{n}L_{2}^{+}\left(  \mathbb{R}\right)
$, and, moreover, the norms of the inverse operators are uniformly bounded.
It remains to show that the solution of (\ref{reduced system in operators})
tends to the solution of (\ref{Equation Hankel 3}) when $n\rightarrow\infty$.
Indeed, let $f\in L_{2}^{+}\left(  \mathbb{R}\right)  $. Consider%
\begin{align*}
R_{n}f  &  :=\left(  \left(  I+\mathbf{H}(\varphi)\right)  ^{-1}-\left(
P_{n}\left(  I+\mathbf{H}(\varphi)\right)  P_{n}\right)  ^{-1}\right)  f\\
&  =-\left(  P_{n}+P_{n}\mathbf{H}(\varphi)P_{n}\right)  ^{-1}\left(  \left(
I+\mathbf{H}(\varphi)\right)  Q_{n}+Q_{n}\mathbf{H}(\varphi)P_{n}\right)
\left(  I+\mathbf{H}(\varphi)\right)  ^{-1}f+Q_{n}\left(  I+\mathbf{H}%
(\varphi)\right)  ^{-1}f.
\end{align*}
Thus,
\[
\left\Vert R_{n}f\right\Vert _{L_{2}}\leq M_{n}M\left\Vert Q_{n}%
\psi\right\Vert _{L_{2}}+M\left\Vert Q_{n}\psi_{n}\right\Vert _{L_{2}%
}+\left\Vert Q_{n}\psi\right\Vert _{L_{2}}%
\]
where $M_{n}=\left\Vert \left(  P_{n}\left(  I+\mathbf{H}(\varphi)\right)
P_{n}\right)  ^{-1}\right\Vert _{L_{2}}$, $M=\left\Vert I+\mathbf{H}%
(\varphi)\right\Vert _{L_{2}}$, $\psi_{n}=\mathbf{H}(\varphi)P_{n}\left(
I+\mathbf{H}(\varphi)\right)  ^{-1}f$ and $\psi=\left(  I+\mathbf{H}%
(\varphi)\right)  ^{-1}f$. Since $M_{n}$ is uniformly bounded with respect to
$n$, and the sequence of functions $\left\{  \psi_{n}\right\}  $ converges in
the $L_{2}$-norm, we obtain that
\[
\lim_{n\rightarrow\infty}\left\Vert R_{n}f\right\Vert _{L_{2}}=0.
\]
\end{proof}

\begin{remark}
In fact for the proof of Theorem \ref{Th reduction in L2} it is sufficient to
notice that the case under consideration is covered by Proposition 2.4 from
\cite{BottcherSilbermann1999}, however this would require some additional
notations and definitions.
\end{remark}

Thus, we proved the convergence of the finite section method for the system
(\ref{Equation Hankel 3}) in the space $l_{2}\left(  \mathbb{Z}\right)  $. For
the proof of a stronger convergence it is necessary to use some deeper results
concerning properties of the operator $\mathbf{H}(\varphi)$. In
\cite{GR2015SIAM}, \cite{GR2018MathNotes} it was shown that under the
assumption (\ref{cond q}) the operator $\mathbf{H}(\varphi)$ is nuclear
together with the operators
\[
\frac{\partial^{j}}{\partial x^{j}}\mathbf{H}(\varphi),\quad j=1,2,\ldots
,\left\lfloor 2\alpha\right\rfloor .
\]
We recall that the operator $K$ is nuclear if its singular numbers satisfy the
condition
\[
\sum_{j=1}^{\infty}\left\vert s_{j}(K)\right\vert <\infty.
\]

In the case $\alpha=1$ the first and the second derivatives of the operator
$\mathbf{H}(\varphi)$ are nuclear operators. The set of nuclear operators is
denoted by $G_{1}$.

According to system (\ref{Equation Hankel 3}) the matrix of the operator
$\mathbf{H}(\varphi)$ in the basis
\[
e_{k}(\rho)=\frac{z^{k}(\rho)}{\frac{1}{2}-i\rho},\quad k=0,1,\ldots
\]
has the form
\begin{equation}
\widehat{\mathbf{H}}(\varphi)=\left\{  h_{k+m}\right\}  _{k,m=0}^{\infty
},\quad h_{p}=\frac{\left(  -1\right)  ^{p+1}}{2\pi}\int_{-\infty}^{\infty
}\frac{\varphi\left(  \tau\right)  z^{p+1}\left(  \tau\right)  d\tau}{\frac
{1}{4}+\tau^{2}}. \label{matrix H}%
\end{equation}
As it is known, the trace of the matrix is invariant with respect to the
change of the basis. If the operator is nuclear the sum of the absolute values
of the diagonal elements is finite (see, e.g., \cite[p. 267]{Conway}). In our
case this means that
\[
\sum_{m=0}^{\infty}\left\vert h_{2m}\right\vert <\infty.
\]
Consider the symbol
\[
\varphi_{1}\left(  \rho\right)  =z\left(  \rho\right)  \varphi\left(
\rho\right)  .
\]
It is easy to see that
\[
\mathbf{H}(\varphi_{1})=\mathbf{H}(\varphi)\left(  P^{+}zP^{+}\right)  .
\]
Thus, $\mathbf{H}(\varphi_{1})$ also belongs to $G_{1}$. The matrix of the
operator $\mathbf{H}(\varphi_{1})$ has the form
\[
\widehat{\mathbf{H}}(\varphi_{1})=\left\{  h_{k+m+1}\right\}  _{k,m=0}%
^{\infty}.
\]
Thus we have
\[
\sum_{m=0}^{\infty}\left\vert h_{2m+1}\right\vert <\infty
\]
and hence
\[
\sum_{m=0}^{\infty}\left\vert h_{m}\right\vert <\infty.
\]

Let us introduce the space $l_{1}\left(  \mathbb{R}\right)  $ of functions
admitting the representation
\[
f\left(  \rho\right)  =\frac{1}{\frac{1}{2}-i\rho}\sum_{k=0}^{\infty}%
f_{k}z^{k}(\rho),\quad\sum_{k=0}^{\infty}\left\vert f_{k}\right\vert <\infty.
\]
Then the following statement is valid.

\begin{theorem}
\label{Th H is compact in l1}Let the function $\varphi\left(  \rho\right)  $
of the form (\ref{symbol phi}) satisfy the conditions (\ref{cond q}%
)-(\ref{Qprime}). Then the operator $\mathbf{H}(\varphi)$ is compact in the
space $l_{1}\left(  \mathbb{R}\right)  $ and the operator $I+\mathbf{H}%
(\varphi)$ is bounded and invertible in this space.
\end{theorem}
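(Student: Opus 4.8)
The plan is to argue entirely at the level of the Hankel matrix $\widehat{\mathbf{H}}(\varphi)=\{h_{k+m}\}_{k,m=0}^{\infty}$ from \eqref{matrix H}, exploiting the absolute summability $\sum_{m=0}^{\infty}|h_{m}|<\infty$ that was just obtained from the nuclearity of $\mathbf{H}(\varphi)$ and of $\mathbf{H}(\varphi_{1})$, $\varphi_{1}=z\varphi$, under \eqref{cond q}--\eqref{Qprime}. First I would establish boundedness (hence, together with the boundedness of $I$, that of $I+\mathbf{H}(\varphi)$) on $l_{1}(\mathbb{R})$: for $f=(f_{k})_{k\ge0}$ one has $(\widehat{\mathbf{H}}(\varphi)f)_{k}=\sum_{m\ge0}h_{k+m}f_{m}$, so
\[
\|\widehat{\mathbf{H}}(\varphi)f\|_{l_{1}}\le\sum_{m\ge0}|f_{m}|\sum_{k\ge0}|h_{k+m}|\le\Big(\sum_{p\ge0}|h_{p}|\Big)\|f\|_{l_{1}},
\]
and in fact $\|\mathbf{H}(\varphi)\|_{l_{1}\to l_{1}}=\sum_{p\ge0}|h_{p}|$, the value being attained on the first basis vector.

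Next I would obtain compactness by finite-rank approximation. For $M\in\mathbb{N}$ let $\widehat{\mathbf{H}}_{M}$ be the Hankel matrix whose entry is $h_{k+m}$ when $k+m<M$ and $0$ otherwise. Then $(\widehat{\mathbf{H}}_{M}f)_{k}=0$ for $k\ge M$, so the range of $\widehat{\mathbf{H}}_{M}$ lies in $\operatorname{span}\{e_{0},\dots,e_{M-1}\}$ and $\widehat{\mathbf{H}}_{M}$ has finite rank; the same computation as above gives $\|\widehat{\mathbf{H}}(\varphi)-\widehat{\mathbf{H}}_{M}\|_{l_{1}\to l_{1}}\le\sum_{p\ge M}|h_{p}|\to0$ as $M\to\infty$. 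Hence $\mathbf{H}(\varphi)$ is a norm limit of finite-rank operators and is therefore compact on $l_{1}(\mathbb{R})$.

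It remains to prove invertibility of $I+\mathbf{H}(\varphi)$ on $l_{1}(\mathbb{R})$. Since $\mathbf{H}(\varphi)$ is compact on $l_{1}$, the Riesz--Schauder theory ($I+\mathbf{H}(\varphi)$ is Fredholm of index $0$) reduces the claim to injectivity on $l_{1}$. For this I would use the continuous inclusion $l_{1}(\mathbb{R})\subset L_{2}^{+}(\mathbb{R})$ (i.e. $l_{1}\subset l_{2}$ on the coefficient side), noting that $\mathbf{H}(\varphi)$ is given by the one and the same matrix $\{h_{k+m}\}$ on both spaces: any $f\in l_{1}(\mathbb{R})$ with $(I+\mathbf{H}(\varphi))f=0$ then lies in the kernel of $I+\mathbf{H}(\varphi)$ acting on $L_{2}^{+}(\mathbb{R})$, which is trivial because that operator is invertible by \cite{GR2015SIAM}. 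Thus $f=0$, $I+\mathbf{H}(\varphi)$ is injective on $l_{1}$, and the Fredholm alternative yields bijectivity with bounded inverse (the boundedness of the inverse also following from the open mapping theorem).

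The two points I would spell out with care — and where the only real subtlety lies — are: (i) that $\sum_{m}|h_{m}|<\infty$ genuinely follows from \eqref{cond q}--\eqref{Qprime}, which rests on the nuclearity of $\mathbf{H}(\varphi)$ and on passing to $\varphi_{1}=z\varphi$ to control the odd-indexed diagonal, exactly as done in the discussion preceding the statement; and (ii) the compatibility of the $l_{1}$ and $L_{2}^{+}$ realizations of $\mathbf{H}(\varphi)$, so that a kernel element in $l_{1}$ is a bona fide kernel element in $L_{2}^{+}$. Everything else is the standard compact-perturbation-of-the-identity argument transferring invertibility from the larger Hilbert space $L_{2}^{+}(\mathbb{R})$ down to the smaller Banach space $l_{1}(\mathbb{R})$.
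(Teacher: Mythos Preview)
Your proposal is correct and follows essentially the same route as the paper: both prove compactness on $l_{1}(\mathbb{R})$ by approximating $\widehat{\mathbf{H}}(\varphi)$ in operator norm by finite-rank truncates, using the tail bound $\sum_{p\ge N}|h_{p}|\to 0$ that comes from $\sum_{m}|h_{m}|<\infty$, and then obtain invertibility via the Fredholm alternative together with the inclusion $l_{1}(\mathbb{R})\subset L_{2}^{+}(\mathbb{R})$ and the known invertibility of $I+\mathbf{H}(\varphi)$ on $L_{2}^{+}(\mathbb{R})$. The only cosmetic difference is the choice of finite-rank approximant---the paper uses the row truncation $P_{n}\mathbf{H}(\varphi)$, while you use the anti-diagonal cutoff $\{h_{k+m}:k+m<M\}$---but both lead to the same estimate $\|\,\cdot\,\|_{l_{1}\to l_{1}}\le\sum_{p\ge N}|h_{p}|$.
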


\begin{Proof}
Let us show that $\mathbf{H}(\varphi)$ can be represented as a limit with
respect to the operator norm of the family of finite-dimensional operators
$\left\{  K_{n}\right\}  _{n=1}^{\infty}$. Indeed, let $f\left(  \rho\right)
\in l_{1}\left(  \mathbb{R}\right)  $. Then
\begin{equation*}
u_{n}\left(  \rho\right)     :=\left(  \mathbf{H}(\varphi)-P_{n}%
\mathbf{H}(\varphi)\right)  f\left(  \rho\right)   =
\begin{cases}
0,& m=0,1,\ldots,n-1\\
\sum_{k=0}^{\infty}h_{k+m}f_{k}\frac{z^{k}(\rho)}{\frac{1}{2}-i\rho},&
m=n,n+1,\ldots.
\end{cases}
\end{equation*}
Obviously,
\[
\left\Vert u_{n}\left(  \rho\right)  \right\Vert _{l_{1}}\leq\left(
\sum_{k=n}^{\infty}\left\vert h_{k}\right\vert \right)  \left\Vert
f\right\Vert _{l_{1}}.
\]
Thus,
\[
\lim_{n\rightarrow\infty}\left\Vert u_{n}\left(  \rho\right)  \right\Vert
_{l_{1}}=0,
\]
and the operator $\mathbf{H}(\varphi)$ is compact in the space $l_{1}\left(
\mathbb{R}\right)  $. This implies that the operator $I+\mathbf{H}(\varphi)$
is a Fredholm operator with the index $0$. On the other hand we already know
that the operator $I+\mathbf{H}(\varphi)$ is invertible in the space
$L_{2}^{+}\left(  \mathbb{R}\right)  $. Hence
\[
\left.  \ker\left(  I+\mathbf{H}(\varphi)\right)  \right\vert _{l_{1}\left(
\mathbb{R}\right)  }=\left\{  0\right\}  ,
\]
since $l_{1}\left(  \mathbb{R}\right)  \subset L_{2}^{+}\left(  \mathbb{R}%
\right)  $. Consequently, $\left(  I+\mathbf{H}(\varphi)\right)  $ is
invertible in $l_{1}\left(  \mathbb{R}\right)  $.
\end{Proof}

\begin{theorem}
\label{Th applicability of reduction in l1}Let the function $\varphi\left(
\rho\right)  $ of the form (\ref{symbol phi}) satisfy the conditions
(\ref{cond q})-(\ref{Qprime}). Then the system (\ref{Equation Hankel 3})
admits the finite section method in the space $l_{1}\left(  \mathbb{R}\right)
$.
\end{theorem}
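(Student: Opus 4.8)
The plan is to mimic the structure of the proof of Theorem~\ref{Th reduction in L2}, but now working in the stronger space $l_{1}(\mathbb{R})$ rather than in $L_{2}^{+}(\mathbb{R})$, using the fact (established via nuclearity in the passage above) that the matrix entries $\{h_{m}\}$ of $\mathbf{H}(\varphi)$ are absolutely summable. First I would observe that the reduced system is again $\left(\left(P_{n}+P_{n}\mathbf{H}(\varphi)\right)P_{n}y\right)(\rho)=-P_{n}\mathbf{H}(\varphi)(1)(\rho)$, exactly as in equation~(\ref{reduced system in operators}), and that by Theorem~\ref{Th H is compact in l1} the operator $I+\mathbf{H}(\varphi)$ is boundedly invertible in $l_{1}(\mathbb{R})$. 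So the whole argument reduces to showing that $P_{n}(I+\mathbf{H}(\varphi))P_{n}$ is invertible on $P_{n}l_{1}(\mathbb{R})$ for all large $n$, with uniformly bounded inverses, and that the approximate solutions converge.

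The key algebraic identity is the same as (\ref{operators eq}):
\[
P_{n}\left(I+\mathbf{H}(\varphi)\right)P_{n}\cdot P_{n}A^{-1}P_{n}=P_{n}-P_{n}\mathbf{H}(\varphi)Q_{n}A^{-1}P_{n},
\]
with $A^{-1}=(I+\mathbf{H}(\varphi))^{-1}$, now regarded as an operator on $l_{1}(\mathbb{R})$. The point I must verify is that $\|P_{n}\mathbf{H}(\varphi)Q_{n}A^{-1}P_{n}\|_{l_{1}}\to 0$. Here I would \emph{not} use strong convergence of $Q_{n}$ (which fails in $l_{1}$-type spaces) but instead exploit the Hankel structure directly: for $f\in l_{1}(\mathbb{R})$ with coefficients $\{f_{k}\}$, the vector $\mathbf{H}(\varphi)Q_{n}f$ has coefficients $\left(\sum_{k\ge n}h_{k+m}f_{k}\right)_{m\ge 0}$, so its $l_{1}$-norm is bounded by $\left(\sum_{p\ge n}|h_{p}|\right)\|f\|_{l_{1}}$, exactly as in the proof of Theorem~\ref{Th H is compact in l1}. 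Since $\sum_{p=0}^{\infty}|h_{p}|<\infty$ (the nuclearity argument in the text), the tail $\sum_{p\ge n}|h_{p}|\to 0$, giving $\|\mathbf{H}(\varphi)Q_{n}\|_{l_{1}}\to 0$ in operator norm; composing with the bounded operators $A^{-1}P_{n}$ and $P_{n}$ yields the claim. Consequently the right-hand side $P_{n}-P_{n}\mathbf{H}(\varphi)Q_{n}A^{-1}P_{n}$ is invertible on $P_{n}l_{1}(\mathbb{R})$ for $n$ large with inverses of norm $\le (1-\varepsilon_{n})^{-1}$, hence $\left(P_{n}(I+\mathbf{H}(\varphi))P_{n}\right)^{-1}=P_{n}A^{-1}P_{n}\left(P_{n}-P_{n}\mathbf{H}(\varphi)Q_{n}A^{-1}P_{n}\right)^{-1}$ exists with norms uniformly bounded in $n$.

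For the convergence of the approximate solutions I would repeat the $R_{n}f$ computation from the proof of Theorem~\ref{Th reduction in L2} verbatim: writing $R_{n}f=\left(A^{-1}-\left(P_{n}(I+\mathbf{H}(\varphi))P_{n}\right)^{-1}\right)f$ and inserting $I=P_{n}+Q_{n}$ appropriately one gets a bound of the form $\|R_{n}f\|_{l_{1}}\le M_{n}M\|\mathbf{H}(\varphi)Q_{n}\psi\|_{l_{1}}+\|Q_{n}\psi\|_{l_{1}}+\ldots$ with $\psi=A^{-1}f\in l_{1}(\mathbb{R})$; each term tends to $0$ because $\|Q_{n}\psi\|_{l_{1}}\to 0$ for the fixed vector $\psi\in l_{1}(\mathbb{R})$ and $\|\mathbf{H}(\varphi)Q_{n}\|_{l_{1}}\to 0$ as just shown, while $M_{n}$ is uniformly bounded. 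I expect the only genuinely new ingredient compared with Theorem~\ref{Th reduction in L2} to be the uniform operator-norm estimate $\|\mathbf{H}(\varphi)Q_{n}\|_{l_{1}}=\sum_{p\ge n}|h_{p}|\to 0$; once that is in place everything else is a transcription of the earlier argument. The main obstacle, then, is simply making sure the absolute summability $\sum_{p}|h_{p}|<\infty$ — which rests on the nuclearity of $\mathbf{H}(\varphi)$ and $\mathbf{H}(\varphi_{1})$ under hypothesis~(\ref{cond q}) — is correctly invoked, since this is precisely what promotes strong convergence of $Q_{n}$ (available in $L_{2}$) to the operator-norm convergence of $\mathbf{H}(\varphi)Q_{n}$ that the $l_{1}$ setting requires.
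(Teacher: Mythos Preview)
Your approach is correct and is exactly what the paper intends: its proof reads, in full, ``The proof is analogous to that of Theorem~\ref{Th reduction in L2}.'' One small correction to your parenthetical: strong convergence $Q_{n}\to 0$ \emph{does} hold in $l_{1}(\mathbb{R})$ (for $f\in l_{1}$ one has $\|Q_{n}f\|_{l_{1}}=\sum_{k\ge n}|f_{k}|\to 0$); what actually breaks down is the implication ``$K$ compact and $Q_{n}\to 0$ strongly $\Rightarrow \|KQ_{n}\|\to 0$'', which in the $L_{2}$ proof tacitly uses $Q_{n}^{*}=Q_{n}$ so that $Q_{n}^{*}\to 0$ strongly on the dual --- a property that fails on $l_{\infty}=l_{1}^{*}$. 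Your direct Hankel estimate $\|\mathbf{H}(\varphi)Q_{n}\|_{l_{1}}\le \sum_{p\ge n}|h_{p}|\to 0$ (the mirror of the bound in the proof of Theorem~\ref{Th H is compact in l1}) is precisely the right substitute, and with it the remainder of the argument transcribes verbatim.
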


The proof is analogous to that of Theorem \ref{Th reduction in L2}.

For considering the convergence in the stronger sense we make use of the
conditions (\ref{R factorized})--(\ref{Qprime}) for the potential of the kind
(\ref{cond q}). One can expect that the larger is $\alpha$ in (\ref{cond q})
the faster the finite section method applied to system
(\ref{Equation Hankel 3}) converges. Indeed, elements of the matrix
(\ref{matrix H}) are Fourier coefficients (this becomes obvious upon returning
to the unitary circle). It is well known that on a unitary circle the
existence of $l$-th derivative of a function implies the decay of its Fourier
coefficients as $o\left(  \left\vert \rho\right\vert ^{-l}\right)  $, and this
fact is proved by a direct integration by parts ($l$ times). In the case of
the real axis $\mathbb{R}$ the situation turns more complicated due to the
necessity of taking into account the behaviour at infinity. However the
technique developed in \cite{GR2015SIAM}, \cite{GR2018MathNotes} allows one to
prove the following results.

Let
\begin{equation}
\varphi_{0}\left(  \rho\right)  =G_{-}\left(  \rho\right)  e^{iS(\rho,x)}
\label{phi0}%
\end{equation}
($\varphi\left(  \rho\right)  =T_{+}\left(  \rho\right)  \varphi_{0}\left(
\rho\right)  $, the factor $T_{+}\left(  \rho\right)  $ will be taken into
account in Theorem \ref{Th asymptotics hp}), where $G_{-}\left(  \rho\right)
$ satisfies (\ref{G-}), (\ref{Qprime}) and $S(\rho,x)$ has the form (\ref{s}).
Denote%
\begin{equation}
\varphi_{0}^{-}\left(  \rho\right)  :=\frac{1}{2\pi i}\int_{-\infty}^{\infty
}\frac{\varphi_{0}\left(  \tau\right)  }{\tau-\rho}d\tau,\quad\rho
\in\mathbb{R}, \label{phi0-}%
\end{equation}
where the integral is understood in the sense of the limit value from the
lower half-plane.

\begin{theorem}
\label{Th estimate for hp}Let $\varphi_{0}^{-}\left(  \rho\right)  $ have the
form (\ref{phi0-}). Then its Fourier coefficients, the numbers%
\begin{equation}
h_{p}^{\left(  0\right)  }=\frac{1}{2\pi}\int_{-\infty}^{\infty}\varphi
_{0}^{-}\left(  \tau\right)  \frac{z^{p}(\tau)}{\left(  \frac{1}{2}%
-i\tau\right)  ^{2}}d\tau,\quad p=1,2,\ldots\label{hP0}%
\end{equation}
admit the estimate%
\[
\left\vert h_{p}^{\left(  0\right)  }\right\vert =O\left(  \frac{1}{p^{l}%
}\right)  ,\quad p\rightarrow\infty
\]
with $l=\left\lfloor \frac{4}{5}\alpha+1\right\rfloor $.
\end{theorem}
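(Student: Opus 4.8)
The plan is to reduce the estimate of $h_p^{(0)}$ to a decay estimate for the Fourier coefficients of $\varphi_0^{-}$ on the real line, and then to exploit the smoothness gain that comes from the factorization $\varphi_0=G_{-}e^{iS}$ together with the nuclearity of $\partial_x^j\mathbf{H}(\varphi)$ for $j\le\lfloor 2\alpha\rfloor$. First I would pass to the unit circle via the M\"obius change of variable $z=z(\tau)$, under which $h_p^{(0)}$ becomes, up to a harmless factor, the $p$-th Fourier coefficient of a function built from $\varphi_0^{-}(z)$; on the circle, decay of Fourier coefficients like $O(p^{-l})$ is equivalent to $l$-fold integration by parts being available, i.e.\ to membership of the function in a Sobolev-type class $W_1^{l}$ of the circle (or, more precisely, to $\varphi_0^{-}$ having $l$ derivatives with appropriate integrability). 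So the whole problem becomes: how many "derivatives" does $\varphi_0^{-}$ have, measured in the sense adapted to $\mathbb{R}$?

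The key observation is that differentiation in $x$ of the symbol multiplies by $(2i\rho)^j$ (cf.\ \eqref{symbol der}), so the nuclearity of $\partial_x^j\mathbf{H}(\varphi)$ for $j\le\lfloor2\alpha\rfloor$ — established in \cite{GR2015SIAM}, \cite{GR2018MathNotes} under \eqref{cond q} — encodes exactly a quantitative statement about $\rho^j\varphi(\rho)$ (and hence, after the Cauchy projection, $\rho^j\varphi^{-}(\rho)$) lying in a trace-class Hankel setting, which in turn bounds $\sum_p p^{j}|h_p|<\infty$-type sums. The second input is the structure of $G_{-}$ in \eqref{G-}: it is $(2i\rho)^{-1}$ times a Fourier transform of $q$ plus $(2i\rho)^{-2}$ times a Fourier transform of $Q'$, with $Q'\in L_1(\mathbb{R},(1+|x|)^{\alpha-1})$ by \eqref{Qprime}; thus $G_{-}$ itself carries a definite number of "spatial moments" which, via Fourier duality, translate into differentiability of $\varphi_0^{-}$. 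The step I would carry out next is to combine these: estimate $\int_{-\infty}^\infty |\frac{d^l}{d\rho^l}(\varphi_0^{-}(\rho)\,(\text{Jacobian factor}))|\,d\rho$ by splitting the $l$ derivatives between the $(2i\rho)^{-k}$ prefactors (which each contribute a gain, but only finitely many times before they stop helping) and the Fourier-transform parts of $q$ and $Q'$ (which can absorb up to $\alpha$, respectively $\alpha-1$, derivatives in the weighted-$L_1$ sense). Optimizing this budget — roughly, we can afford about $2$ derivatives "for free" from the $(2i\rho)^{-2}$ and then fractions of $\alpha$ from the moment conditions, but the Cauchy projection $P^-$ and the oscillatory factor $e^{iS(\rho,x)}$ with its cubic phase $8t\rho^3$ cost us a fixed fraction of the available smoothness — is what produces the somewhat unusual exponent $l=\lfloor\tfrac45\alpha+1\rfloor$.

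The main obstacle, and the place where the paper's prior technique is essential, is controlling the interplay between the projection operator $P^{-}$ (equivalently, the Hilbert transform) and the weighted-$L_1$ classes: the Hilbert transform is not bounded on $L_1$, so one cannot naively say that $\varphi_0^{-}$ inherits the smoothness of $\varphi_0$; instead one must track how the Cauchy integral in \eqref{phi0-} acts on functions that are themselves Fourier transforms of weighted-$L_1$ data, and how the cubic oscillation $e^{8it\rho^3}$ redistributes decay between the variable $\rho$ at infinity and the resulting Fourier coefficients. I expect the argument to proceed by writing $\varphi_0^{-}$ explicitly as a superposition (over the $x$-support of $q$ and $Q'$) of elementary Cauchy integrals of oscillatory exponentials, differentiating $l$ times under the integral sign, and then estimating each piece by stationary-phase / van der Corput bounds for the cubic phase, with the weight $(1+|x|)^{\alpha-1}$ providing exactly the integrability in the external variable needed to sum up. The fraction $\tfrac45$ (rather than $1$) is the signature of the cubic, rather than linear, oscillation: a van der Corput estimate for $8t\rho^3$ loses a factor $\rho^{-1/3}$ per stationary point and this, balanced against the $l$ derivatives and the $\alpha$ moments, is what caps the exponent at $\tfrac45\alpha+1$. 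I would relegate the bookkeeping of constants to the machinery of \cite{GR2018MathNotes}, invoking it as a black box for the quantitative decay of Fourier coefficients of Hankel symbols in weighted classes, and present here only the reduction to the circle, the derivative-budget split, and the van der Corput step that fixes the exponent.
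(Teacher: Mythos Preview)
Your overall architecture---pass to the circle, integrate by parts $l$ times, and reduce to showing $(\widetilde\varphi_0^{-})^{(l)}\in L_1(\mathbb T)$---is exactly the paper's route. But two of the inputs you propose for that last step do not deliver what is needed.

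First, the nuclearity of $\partial_x^{\,j}\mathbf H(\varphi)$ for $j\le\lfloor 2\alpha\rfloor$ only gives absolute summability of the matrix diagonal, i.e.\ $\sum_p|h_p|<\infty$ for the symbol $(2i\rho)^j\varphi$; it does not yield $|h_p|=O(p^{-l})$. Differentiation in $x$ multiplies the symbol by $(2i\rho)^j$, which under the M\"obius map $z=z(\rho)$ is \emph{not} multiplication of the $p$-th Fourier coefficient by $p^j$, so there is no passage from ``$\partial_x^{\,j}\mathbf H$ nuclear'' to ``$\sum_p p^{\,j}|h_p|<\infty$''. The paper does not invoke nuclearity in this proof at all.

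Second, the exponent $\tfrac45$ is not produced by a van der Corput $\rho^{-1/3}$ loss on the cubic phase, and your heuristic would not recover it. The paper's mechanism is this: the chain rule on the circle gives $(\widetilde\varphi_0^{-})^{(l)}(z(\rho))=c_l(\tfrac12-i\rho)^{l+1}J_l(\rho)$ with $J_l(\rho)=\int_{\mathbb R}\varphi_0(\tau)(\tfrac12-i\tau)^{l-1}(\tau-\rho)^{-(l+1)}\,d\tau$. One substitutes the integral form \eqref{G-} of $G_-$, interchanges the order of integration, and in the inner $\tau$-integral rescales by $\tau=\beta(s)u$, $\rho=\beta(s)\xi$ with $\beta(s)=((s-x)/12t)^{1/2}$, so that the phase becomes $\Lambda(s)\,\sigma(u)$ with $\sigma(u)=u^3/3-u$ and large parameter $\Lambda(s)\sim s^{3/2}$. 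Saddle-point bounds then give either $|\xi\mp1|^{-(l+1)}\Lambda^{-1/2}(s)$ or $\Lambda^{l/2}(s)$ according as $|\xi\mp1|\Lambda^{1/2}(s)\gtrless 1$; integrating in $s$ against $|q(s)|$ (using the $(1+s)^\alpha$ moment) over the sets $M^\pm$ and their complements, and then requiring the result multiplied by the chain-rule factor $(1+|\rho|)^{l+1}$ and divided by the Jacobian $(1+|\rho|)^{2}$ to lie in $L_1(\mathbb R)$, produces exactly the constraint $\tfrac{5}{2}l-\tfrac{5}{2}\le 2\alpha$, whence $l=\lfloor\tfrac45\alpha+1\rfloor$. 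The $\tfrac45$ is thus the outcome of balancing the growth $(1+|\rho|)^{l+1}$ against the combined saddle-point/moment gain, not of a $\rho^{-1/3}$ stationary-phase loss; without this rescaling and the $M^\pm$ decomposition you will not arrive at the correct exponent.
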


\begin{proof}
The change of the variable $u=z(\tau)$ in the integral (\ref{hP0}) leads to
the expression for $h_{p}^{\left(  0\right)  }$:
\begin{equation}
h_{p}^{\left(  0\right)  }=\frac{1}{2\pi i}\int_{\mathbb{T}}\widetilde
{\varphi}_{0}^{-}\left(  u\right)  u^{p}du,\label{hP0 new}%
\end{equation}
where
\[
\widetilde{\varphi}_{0}^{-}\left(  u\right)  =\varphi_{0}^{-}\left(
z^{-1}(u)\right)  ,\quad z^{-1}(u)=\frac{1}{2i}\frac{u-1}{u+1}.
\]
Integrating by parts $l$ times (\ref{hP0 new}) we obtain
\begin{equation}
h_{p}^{\left(  0\right)  }=\frac{\left(  -1\right)  ^{l}}{2\pi i}\frac
{1}{\left(  p+1\right)  \left(  p+2\right)  \cdots\left(  p+l\right)  }%
\int_{\mathbb{T}}\left(  \widetilde{\varphi}_{0}^{-}\left(  u\right)  \right)
^{\left(  l\right)  }u^{p+l}du.\label{hD}%
\end{equation}
Let us show that the conditions of the theorem guarantee the inclusion
$\left(  \widetilde{\varphi}_{0}^{-}\left(  u\right)  \right)  ^{\left(
l\right)  }\in L_{1}\left(  \mathbb{T}\right)  $. For this let us return to
the real axis taking into account that
\begin{align*}
\widetilde{\varphi}_{0}^{-}\left(  u\right)   &  =\frac{1}{2\pi i}%
\int_{\mathbb{R}}\frac{\varphi_{0}\left(  \tau\right)  }{\tau-\frac{1}%
{2i}\frac{u-1}{u+1}}d\tau\\
&  =\frac{1}{2\pi i}\int_{\mathbb{T}}\frac{\varphi_{0}\left(  z^{-1}%
(v)\right)  }{\frac{1}{2i}\frac{v-1}{v+1}-\frac{1}{2i}\frac{u-1}{u+1}}%
\frac{dv}{i\left(  v+1\right)  ^{2}}\\
&  =\frac{1}{2\pi i}\int_{\mathbb{T}}\frac{\varphi_{0}\left(  z^{-1}%
(v)\right)  }{v-u}\frac{u+1}{v+1}dv\\
&  =\frac{1}{2\pi i}\int_{\mathbb{T}}\frac{\varphi_{0}\left(  z^{-1}%
(v)\right)  }{v-u}dv-\frac{1}{2\pi i}\int_{\mathbb{T}}\frac{\varphi_{0}\left(
z^{-1}(v)\right)  }{v+1}dv.
\end{align*}
Thus, since the second term here is constant, the derivatives of this function
are calculated by the formula%
\[
\left(  \widetilde{\varphi}_{0}^{-}\left(  u\right)  \right)  ^{\left(
l\right)  }=\frac{\left(  l-1\right)  !}{2\pi i}\int_{\mathbb{T}}\frac
{\varphi_{0}\left(  z^{-1}(v)\right)  }{\left(  v-u\right)  ^{l+1}}dv.
\]
The inverse change of the variable $v=z(\tau)$, $u=z(\rho)$ gives
\begin{align}
\left(  \widetilde{\varphi}_{0}^{-}\left(  z(\rho)\right)  \right)  ^{\left(
l\right)  } &  =-\frac{\left(  l-1\right)  !}{2\pi}\int_{\mathbb{R}}%
\frac{\varphi_{0}\left(  \tau\right)  }{\left(  \frac{\frac{1}{2}+i\tau}%
{\frac{1}{2}-i\tau}-\frac{\frac{1}{2}+i\rho}{\frac{1}{2}-i\rho}\right)
^{l+1}}\frac{d\tau}{\left(  \frac{1}{2}-i\tau\right)  ^{2}}\nonumber\\
&  =-\frac{\left(  l-1\right)  !}{2\pi}\left(  -i\right)  ^{l+1}\left(
\frac{1}{2}-i\rho\right)  ^{l+1}\int_{\mathbb{R}}\frac{\varphi_{0}\left(
\tau\right)  \left(  \frac{1}{2}-i\tau\right)  ^{l-1}}{\left(  \tau
-\rho\right)  ^{l+1}}d\tau\nonumber\\
&  =c_{l}\left(  \frac{1}{2}-i\rho\right)  ^{l+1}J_{l}\left(  \rho\right)
,\label{phi0tilde}%
\end{align}
where $c_{l}=-\frac{\left(  l-1\right)  !}{2\pi}\left(  -i\right)  ^{l+1}$
and
\begin{equation}
J_{l}\left(  \rho\right)     :=\int_{\mathbb{R}}\frac{\varphi_{0}\left(
\tau\right)  \left(  \frac{1}{2}-i\tau\right)  ^{l-1}}{\left(  \tau
-\rho\right)  ^{l+1}}d\tau  =\sum_{j=0}^{l-1}C_{l-1}^{j}\left(  \frac{1}{2}\right)  ^{j}\left(
-i\right)  ^{l-1-j}\int_{\mathbb{R}}\frac{\varphi_{0}\left(  \tau\right)
\tau^{l-1-j}}{\left(  \tau-\rho\right)  ^{l+1}}d\tau.\label{sum of integrals}%
\end{equation}
Consider the integral corresponding to $j=0$,%
\begin{equation}
J_{l,0}\left(  \rho\right)  :=\int_{\mathbb{R}}\frac{\varphi_{0}\left(
\tau\right)  \tau^{l-1}}{\left(  \tau-\rho\right)  ^{l+1}}d\tau.\label{Jl0}%
\end{equation}
Let us study the behaviour of (\ref{Jl0}) when $\rho\rightarrow\infty$ using
the method from \cite{GR2018MathNotes}. Substituting the integral (\ref{G-})
into (\ref{Jl0}) and interchanging the order of integration we obtain%
\begin{equation}
J_{l,0}\left(  \rho\right)  =\int_{0}^{\infty}\left(  q(s)I_{l}^{\left(
1\right)  }(s,\rho)+Q^{\prime}(s)I_{l}^{\left(  2\right)  }(s,\rho)\right)
ds,\label{Jl0 in q}%
\end{equation}
where
\begin{equation}
I_{l}^{\left(  1\right)  }(s,\rho)=\frac{1}{2i}\int_{\mathbb{R}}%
\frac{e^{iS(\tau,x-s)}\tau^{l-2}}{\left(  \tau-\rho\right)  ^{l+1}}%
d\tau\label{Il1}%
\end{equation}
and
\begin{equation}
I_{l}^{\left(  2\right)  }(s,\rho)=\frac{1}{\left(  2i\right)  ^{2}}%
\int_{\mathbb{R}}\frac{e^{iS(\tau,x-s)}\tau^{l-3}}{\left(  \tau-\rho\right)
^{l+1}}d\tau.\label{Il2}%
\end{equation}
Consider (\ref{Il1}), where we change the variable:%
\begin{equation}
\tau=\beta\left(  s\right)  u,\quad\rho=\beta\left(  s\right)  \xi
\quad\text{with }\beta\left(  s\right)  =\left(  \frac{s-x}{12t}\right)
^{\frac{1}{2}}.\label{change variable tau=}%
\end{equation}
Setting
\[
\sigma(u)=\frac{u^{3}}{3}-u,\quad\Lambda\left(  s,x\right)  =\Lambda\left(
s\right)  =\frac{\left(  s-x\right)  ^{\frac{3}{2}}}{\left(  3t\right)
^{\frac{1}{2}}}%
\]
we obtain%
\[
I_{l}^{\left(  1\right)  }(s,\rho)=\frac{\beta^{-2}\left(  s\right)  }{2i}%
\int_{\mathbb{R}}\frac{u^{l-2}e^{i\Lambda\left(  s\right)  \sigma(u)}}{\left(
u-\xi\right)  ^{l+1}}du.
\]
Here $\Lambda\left(  s\right)  $ can be regarded as a large parameter, and
hence it is natural to apply to this integral the saddle-point method. In
\cite{GR2018MathNotes} that was done in the cases $l=0$ and $l=2$. In a
similar way (see (4.17) and (5.5) in \cite{GR2018MathNotes}) we obtain
\begin{equation}
\left\vert \int_{\mathbb{R}}\frac{u^{l-2}e^{i\Lambda\left(  s\right)
\sigma(u)}}{\left(  u-\xi\right)  ^{l+1}}du\right\vert \leq
\operatorname*{Const}\left\{
\begin{array}
[c]{c}%
\frac{1}{\left\vert \xi\mp1\right\vert ^{l+1}\Lambda^{\frac{1}{2}}\left(
s\right)  },\quad\left\vert \xi\mp1\right\vert \Lambda^{\frac{1}{2}}\left(
s\right)  \geq1,\\
\Lambda^{\frac{l}{2}}\left(  s\right)  ,\quad\left\vert \xi\mp1\right\vert
\Lambda^{\frac{1}{2}}\left(  s\right)  \leq1.
\end{array}
\right.  \label{bound integral u}%
\end{equation}
These bounds allow us to estimate (\ref{Jl0 in q}). Let us obtain a bound for
the first integral in (\ref{Jl0 in q}). Denoting it as $J_{l,0}^{\left(
1\right)  }\left(  \rho\right)  $ we have%
\begin{align}
\left\vert J_{l,0}^{\left(  1\right)  }\left(  \rho\right)  \right\vert  &
\leq\operatorname*{Const}(\int_{M^{+}}\left\vert q(s)\right\vert \beta
^{-2}(s)\Lambda^{\frac{l}{2}}\left(  s\right)  ds+\int_{\mathbb{R}\backslash
M^{+}}\frac{\left\vert q(s)\right\vert \beta^{-2}(s)}{\left\vert
\xi-1\right\vert ^{l+1}\Lambda^{\frac{1}{2}}\left(  s\right)  }ds\nonumber\\
&  +\int_{M^{-}}\left\vert q(s)\right\vert \beta^{-2}(s)ds+\int_{\mathbb{R}%
\backslash M^{-}}\frac{\left\vert q(s)\right\vert \beta^{-2}(s)}{\left\vert
\xi+1\right\vert ^{l+1}\Lambda^{\frac{1}{2}}\left(  s\right)  }%
ds),\label{Jl0 1}%
\end{align}
where $M^{\pm}=\left\{  s\in\left(  0,\infty\right)  \mid\left\vert \xi
\mp1\right\vert \Lambda^{\frac{1}{2}}\left(  s\right)  \leq1\right\}  $. Since
$\xi=\rho/\beta(s)$, the sets $M^{\pm}$ can be described as follows%
\begin{equation}
M^{\pm}=\left\{  s\in\left(  0,\infty\right)  \mid\left\vert \rho\mp
\beta(s)\right\vert \leq\Lambda^{-\frac{1}{2}}\left(  s\right)  \beta
(s)\right\}  .\label{Mpm}%
\end{equation}
According to the definition of the functions $\beta(s)$ and $\Lambda\left(
s\right)  $,
\[
\Lambda^{-\frac{1}{2}}\left(  s\right)  \beta(s)=d(t)\beta^{-\frac{1}{2}}(s)
\]
where $d(t)$ is calculated by the formula%
\begin{equation}
d=\left(  \Lambda^{-\frac{1}{2}}(s)\beta(s)\right)  \beta^{\frac{1}{2}%
}(s)=\left(  \frac{\beta^{3}(s)}{\Lambda(s)}\right)  ^{\frac{1}{2}}=\frac
{1}{4\sqrt[4]{t/2}}.\label{d(t)}%
\end{equation}
\qquad Thus, the inequalities defining the sets $M^{\pm}$ have the form%
\begin{equation}
\left\vert \rho\mp\beta(s)\right\vert \leq d(t)\beta^{-\frac{1}{2}%
}(s).\label{ineq rho-beta}%
\end{equation}
Let us write this condition (for the \textquotedblleft$-$\textquotedblright%
\ sign) as follows%
\begin{equation}
F_{-}(s):=\beta(s)-d(t)\beta^{-\frac{1}{2}}(s)\leq\rho\leq\beta(s)+d(t)\beta
^{-\frac{1}{2}}(s)=:F_{+}(s).\label{ineq F 1}%
\end{equation}
Obviously, the functions $F_{\pm}(s)$ are monotonous for large enough $s$.
Hence%
\begin{equation}
F_{+}^{-1}(\rho)\leq s\leq F_{-}^{-1}(\rho),\label{ineq F}%
\end{equation}
and it is easy to see that
\begin{equation}
F_{\pm}^{-1}(\rho)\sim c\rho^{2}\label{asympt F}%
\end{equation}
for some $c>0$. Let us estimate the first integral in (\ref{Jl0 1}). Taking
into account (\ref{ineq F}) we obtain
\begin{align*}
\int_{M^{+}}\left\vert q(s)\right\vert \beta^{-2}(s)\Lambda^{\frac{l}{2}%
}\left(  s\right)  ds &  \leq\operatorname*{Const}\int_{M^{+}}\left\vert
q(s)\right\vert \left(  1+\left\vert s\right\vert \right)  ^{-1+\frac{3l}{4}%
}ds\\
&  \leq\operatorname*{Const}\int_{M^{+}}\left\vert q(s)\right\vert \left(
1+\left\vert s\right\vert \right)  ^{\alpha_{1}+\left(  -1+\frac{3l}{4}%
-\alpha_{1}\right)  }ds\\
&  \leq\operatorname*{Const}\left(  1+\left\vert \rho\right\vert \right)
^{\frac{3l}{2}-2-2\alpha_{1}}\int_{M^{+}}\left\vert q(s)\right\vert \left(
1+\left\vert s\right\vert \right)  ^{\alpha_{1}}ds.
\end{align*}
Note that the last integral exists for all $\alpha_{1}\geq0$ since the
interval $M^{+}$ is finite. This integral is a function of $\rho$. Let us
denote it by $\mathbf{L}_{1}^{+}(\rho)$ and find the maximum value of
$\alpha_{1}$ for which $\mathbf{L}_{1}^{+}(\rho)\in L_{1}\left(
\mathbb{R}\right)  $. For $\rho>0$ we have%
\begin{equation*}
\left\Vert \mathbf{L}_{1}^{+}\right\Vert _{L_{1}}   =\int_{0}^{\infty
}\mathbf{L}_{1}^{+}(\rho)d\rho=\int_{0}^{\infty}\int_{M^{+}}\left\vert
q(s)\right\vert \left(  1+s\right)  ^{\alpha_{1}}dsd\rho
  =\int_{0}^{\infty}\left\vert q(s)\right\vert \left(  1+s\right)
^{\alpha_{1}}\left(  \int_{m^{+}}d\rho\right)  ds
\end{equation*}
where the interval $m^{+}:=(\beta(s)-d(t)\beta^{-\frac{1}{2}}(s),\,\beta
(s)+d(t)\beta^{-\frac{1}{2}}(s))$ is defined by the inequalities
(\ref{ineq F 1}). Hence%
\begin{equation*}
\left\Vert \mathbf{L}_{1}^{+}\right\Vert _{L_{1}\left(  \mathbb{R}\right)  }
 \leq\operatorname*{Const}\int_{0}^{\infty}\left\vert q(s)\right\vert
\left(  1+\left\vert s\right\vert \right)  ^{\alpha_{1}}\beta^{-\frac{1}{2}%
}(s)ds  \leq\operatorname*{Const}\int_{0}^{\infty}\left\vert q(s)\right\vert
\left(  1+\left\vert s\right\vert \right)  ^{\alpha_{1}-\frac{1}{4}}ds.
\end{equation*}
Thus, due to (\ref{cond q}), the last integral converges if $\alpha_{1}%
-\frac{1}{4}\leq\alpha$. Choosing the largest possible value $\alpha
_{1}=\alpha+\frac{1}{4}$ we obtain%
\begin{equation}
\int_{M^{+}}\left\vert q(s)\right\vert \beta^{-2}(s)\Lambda^{\frac{l}{2}%
}\left(  s\right)  ds\leq\operatorname*{Const}\left(  1+\left\vert
\rho\right\vert \right)  ^{\frac{3l}{2}-\frac{5}{2}-2\alpha}\mathbf{L}_{1}%
^{+}(\rho)\label{bound1}%
\end{equation}
where $\mathbf{L}_{1}^{+}(\rho)\in L_{1}\left(  \mathbb{R}\right)  $. Now let
us estimate the second integral in (\ref{Jl0 1}). According to (\ref{ineq F 1}%
) it splits in two integrals over the sets $s\geq F_{-}^{-1}(\rho)$ and $0\leq
s\leq F_{+}^{-1}(\rho)$ where $F_{\pm}^{-1}(\rho)$ satisfy the asymptotic
relations (\ref{asympt F}). Furthermore,
\begin{align*}
\int_{F_{-}^{-1}(\rho)}^{\infty}\frac{\left\vert q(s)\right\vert \beta
^{-2}(s)}{\left\vert \xi-1\right\vert ^{l+1}\Lambda^{\frac{1}{2}}\left(
s\right)  }ds &  =\int_{F_{-}^{-1}(\rho)}^{\infty}\frac{\left\vert
q(s)\right\vert \beta^{l-1}(s)\Lambda^{-\frac{1}{2}}\left(  s\right)
}{\left\vert \rho-\beta(s)\right\vert ^{l+1}}ds\\
&  \leq\operatorname*{Const}\int_{F_{-}^{-1}(\rho)}^{\infty}\frac{\left\vert
q(s)\right\vert \left(  1+s\right)  ^{\frac{l}{2}-\frac{5}{4}}}{\left\vert
\rho-\beta(s)\right\vert ^{l+1}}ds\\
&  \leq\operatorname*{Const}\left(  1+\left\vert \rho\right\vert \right)
^{l-\frac{5}{2}-2\alpha_{2}}\int_{F_{-}^{-1}(\rho)}^{\infty}\frac{\left\vert
q(s)\right\vert \left(  1+s\right)  ^{\alpha_{2}}}{\left\vert \rho
-\beta(s)\right\vert ^{l+1}}ds.
\end{align*}
Let us find the largest possible value of $\alpha_{2}\geq0$ such that the last
integral
\[
\mathbf{L}_{2}^{+}(\rho):=\int_{F_{-}^{-1}(\rho)}^{\infty}\frac{\left\vert
q(s)\right\vert \left(  1+s\right)  ^{\alpha_{2}}}{\left\vert \rho
-\beta(s)\right\vert ^{l+1}}ds
\]
be a function of the class $L_{1}\left(  \mathbb{R}\right)  $. We have%
\begin{align*}
\left\Vert \mathbf{L}_{2}^{+}(\rho)\right\Vert _{L_{1}\left(  \mathbb{R}%
\right)  } &  =\int_{0}^{\infty}\mathbf{L}_{2}^{+}(\rho)d\rho=\int_{0}%
^{\infty}q(s)(1+s)^{\alpha_{2}}\left(  \int_{0}^{F_{-}(s)}\frac{d\rho}{\left(
\beta(s)-\rho\right)  ^{l+1}}\right)  ds\\
&  \leq\operatorname*{Const}\int_{0}^{\infty}q(s)(1+s)^{\alpha_{2}}%
\beta^{\frac{l}{2}}(s)ds\leq\operatorname*{Const}\int_{0}^{\infty
}q(s)(1+s)^{\alpha_{2}+\frac{l}{4}}ds.
\end{align*}
The last integral converges when
\begin{equation}
\alpha_{2}+\frac{l}{4}\leq\alpha.\label{ineq alpha2}%
\end{equation}
Thus,
\begin{equation}
\int_{F_{-}^{-1}(\rho)}^{\infty}\frac{\left\vert q(s)\right\vert \beta
^{-2}(s)}{\left\vert \xi-1\right\vert ^{l+1}\Lambda^{\frac{1}{2}}\left(
s\right)  }ds\leq\operatorname*{Const}\left(  1+\left\vert \rho\right\vert
\right)  ^{l-\frac{5}{2}-2\alpha_{2}}\mathbf{L}_{2}^{+}(\rho)\label{bound L2}%
\end{equation}
where $\mathbf{L}_{2}^{+}(\rho)\in L_{1}\left(  \mathbb{R}\right)  $ and
$\alpha_{2}$ satisfies (\ref{ineq alpha2}). The largest value of $\alpha_{2}$
satisfying (\ref{ineq alpha2}) is $\alpha_{2}=\alpha-\frac{l}{4}$. Thus the
bound (\ref{bound L2}) takes the form%
\begin{equation}
\int_{F_{-}^{-1}(\rho)}^{\infty}\frac{\left\vert q(s)\right\vert \beta
^{-2}(s)}{\left\vert \xi-1\right\vert ^{l+1}\Lambda^{\frac{1}{2}}\left(
s\right)  }ds\leq\operatorname*{Const}\left(  1+\left\vert \rho\right\vert
\right)  ^{\frac{3}{2}l-\frac{5}{2}-2\alpha}\mathbf{L}_{2}^{+}(\rho
)\label{bound L2 main}%
\end{equation}
where $\mathbf{L}_{2}^{+}(\rho)\in L_{1}\left(  \mathbb{R}\right)  $. Consider
the second part of the second integral in (\ref{Jl0 1}),%
\begin{equation}
\int_{0}^{F_{+}^{-1}(\rho)}\frac{\left\vert q(s)\right\vert \beta^{-2}%
(s)}{\left\vert \xi-1\right\vert ^{l+1}\Lambda^{\frac{1}{2}}\left(  s\right)
}ds=\int_{\frac{\rho^{2}}{2}}^{F_{+}^{-1}(\rho)}\frac{\left\vert
q(s)\right\vert \beta^{-2}(s)}{\left(  1-\xi\right)  ^{l+1}\Lambda^{\frac
{1}{2}}\left(  s\right)  }ds+\int_{0}^{\frac{\rho^{2}}{2}}\frac{\left\vert
q(s)\right\vert \beta^{-2}(s)}{\left(  1-\xi\right)  ^{l+1}\Lambda^{\frac
{1}{2}}\left(  s\right)  }ds.\label{int22}%
\end{equation}
The first integral here admits the bound (\ref{bound L2 main}) which is proved
similarly. Let us estimate the second integral%
\begin{align*}
\int_{0}^{\frac{\rho^{2}}{2}}\frac{\left\vert q(s)\right\vert \beta^{-2}%
(s)}{\left(  1-\xi\right)  ^{l+1}\Lambda^{\frac{1}{2}}\left(  s\right)  }ds &
=\int_{0}^{\frac{\rho^{2}}{2}}\frac{\left\vert q(s)\right\vert \beta
^{l-1}(s)\Lambda^{-\frac{1}{2}}\left(  s\right)  }{\left(  \beta
(s)-\rho\right)  ^{l+1}}ds\\
&  \leq\operatorname*{Const}\frac{1}{\left(  1+\left\vert \rho\right\vert
\right)  ^{l+1}}\int_{0}^{\frac{\rho^{2}}{2}}\left\vert q(s)\right\vert
\beta^{l-1}(s)\Lambda^{-\frac{1}{2}}\left(  s\right)  ds\\
&  \leq\operatorname*{Const}\frac{1}{\left(  1+\left\vert \rho\right\vert
\right)  ^{l+1}}\int_{0}^{\frac{\rho^{2}}{2}}\left\vert q(s)\right\vert
\left(  1+\left\vert s\right\vert \right)  ^{\frac{l}{2}-\frac{5}{4}}ds.
\end{align*}
The last integral is uniformly bounded with respect to $\rho$ when
\begin{equation}
\frac{l}{2}-\frac{5}{4}\leq\alpha.\label{ineq l/2}%
\end{equation}
In this case%
\begin{equation}
\int_{0}^{\frac{\rho^{2}}{2}}\frac{\left\vert q(s)\right\vert \beta^{-2}%
(s)}{\left(  1-\xi\right)  ^{l+1}\Lambda^{\frac{1}{2}}\left(  s\right)
}ds\leq\operatorname*{Const}\frac{1}{\left(  1+\left\vert \rho\right\vert
\right)  ^{l+1}}.\label{bound vsp}%
\end{equation}
Taking into account the estimates (\ref{bound1}) and (\ref{bound L2 main})
(valid for the integral on the left hand side in (\ref{bound L2 main}) and for
the first integral in (\ref{int22})), as well as the estimate (\ref{ineq l/2}%
)-(\ref{bound vsp}) we obtain that the first two terms in (\ref{Jl0 1}) admit
the estimate%
\begin{equation}
\begin{split}
\int_{M^{+}}\left\vert q(s)\right\vert \beta^{-2}(s)\Lambda^{\frac{l}{2}%
}\left(  s\right)  ds&+\int_{\mathbb{R}\backslash M^{+}}\frac{\left\vert
q(s)\right\vert \beta^{-2}(s)}{\left\vert \xi-1\right\vert \Lambda^{\frac
{1}{2}}\left(  s\right)  }ds\\
&\leq\operatorname*{Const}\left(  \left(
1+\left\vert \rho\right\vert \right)  ^{\frac{3}{2}l-\frac{5}{2}-2\alpha
}\mathbf{L}(\rho)+\left(  1+\left\vert \rho\right\vert \right)  ^{-\left(
l+1\right)  }\right)  ,
\end{split}
\label{estimate M}%
\end{equation}
where $\mathbf{L}(\rho)\in L_{1}\left(  \mathbb{R}\right)  $ and
(\ref{ineq l/2}) is valid. In a similar way it is shown that the other two
terms (\ref{Jl0 1}) also admit the estimate (\ref{estimate M}). Thus,
\begin{equation}
\left\vert J_{l,0}^{\left(  1\right)  }\left(  \rho\right)  \right\vert
\leq\operatorname*{Const}\left(  \left(  1+\left\vert \rho\right\vert \right)
^{\frac{3}{2}l-\frac{5}{2}-2\alpha}\mathbf{L}(\rho)+\left(  1+\left\vert
\rho\right\vert \right)  ^{-\left(  l+1\right)  }\right)
\label{estimate Jl01}%
\end{equation}
with $\mathbf{L}(\rho)\in L_{1}\left(  \mathbb{R}\right)  $ and
(\ref{ineq l/2}). Let us recall that $J_{l,0}^{\left(  1\right)  }\left(
\rho\right)  $ is a first part of the integral (\ref{Jl0 in q}). Its second
part is estimated analogously by taking into account the following two
observations. 1. Due to the condition (\ref{Qprime}) it is easy to show that
\[
Q^{\prime}(s)\in L_{1}\left(  \mathbb{R},\left(  1+\left\vert \rho\right\vert
\right)  ^{\alpha-1}\right)  .
\]
2. The integrals (\ref{Il1}) and (\ref{Il2}) differ by the factors $\tau
^{l-2}$ and $\tau^{l-3}$, respectively. Hence the estimate for the integral
\[
J_{l,0}^{\left(  2\right)  }\left(  \rho\right)  =\int_{0}^{\infty}Q^{\prime
}(s)I_{l}^{\left(  2\right)  }(s,\rho)ds
\]
is obtained from (\ref{estimate Jl01}) by replacing $\alpha$ with $(\alpha-1)$
and decreasing of the power of $\beta(s)$ by one. Thus we have
\begin{align}
\left\vert J_{l,0}^{\left(  2\right)  }\left(  \rho\right)  \right\vert  &
\leq\operatorname*{Const}\left(  \left(  1+\left\vert \rho\right\vert \right)
^{\frac{3}{2}l-\frac{5}{2}-1-2\left(  \alpha-1\right)  }\mathbf{L}%
(\rho)+\left(  1+\left\vert \rho\right\vert \right)  ^{-\left(  l+1\right)
}\right)  \nonumber\\
&  =\operatorname*{Const}\left(  \left(  1+\left\vert \rho\right\vert \right)
^{\frac{3}{2}l-\frac{3}{2}-2\alpha}\mathbf{L}(\rho)+\left(  1+\left\vert
\rho\right\vert \right)  ^{-\left(  l+1\right)  }\right)
,\label{estimate Jl02}%
\end{align}
where the condition (\ref{ineq l/2}) is replaced by
\begin{equation}
\frac{l}{2}-\frac{7}{4}\leq\alpha.\label{ineq l/2 2}%
\end{equation}
Comparing and summing up the estimates (\ref{estimate Jl01}) and
(\ref{estimate Jl02}) we obtain (see (\ref{Jl0 in q})) that%
\begin{equation}
\left\vert J_{l,0}\left(  \rho\right)  \right\vert \leq\operatorname*{Const}%
\left(  \left(  1+\left\vert \rho\right\vert \right)  ^{\frac{3}{2}l-\frac
{3}{2}-2\alpha}\mathbf{L}(\rho)+\left(  1+\left\vert \rho\right\vert \right)
^{-\left(  l+1\right)  }\right)  ,\label{estimate Jl0}%
\end{equation}
where $\mathbf{L}(\rho)\in L_{1}\left(  \mathbb{R}\right)  $ and
(\ref{ineq l/2}) is fulfilled ((\ref{ineq l/2}) is stronger than
(\ref{ineq l/2 2})). Let us recall that the integral $J_{l,0}\left(
\rho\right)  $ of the form (\ref{Jl0}) is one from the group of integrals
$J_{l,j}\left(  \rho\right)  $, $j=0,1,\ldots,l-1$ from the sum
(\ref{sum of integrals}) of integrals having the form
\[
J_{l,j}\left(  \rho\right)  =\int_{\mathbb{R}}\frac{\varphi_{0}\left(
\tau\right)  \tau^{l-1-j}}{\left(  \tau-\rho\right)  ^{l+1}}d\tau.
\]
Obviously these integrals for $j=1,2,\ldots,l-1$ also admit the estimate
(\ref{estimate Jl0}), (\ref{ineq l/2}). Hence the integral $J_{l}\left(
\rho\right)  $ from (\ref{phi0tilde}) admits this estimate as well. Thus,%
\begin{equation}
\left\vert \widetilde{\varphi}_{0}^{-}\left(  z(\rho)\right)  \right\vert
=c_{l}\left\vert \frac{1}{2}-i\rho\right\vert ^{l+1}\left\vert J_{l}\left(
\rho\right)  \right\vert  \leq\operatorname*{Const}\left(  \left(  1+\left\vert \rho\right\vert
\right)  ^{\frac{5}{2}l-\frac{1}{2}-2\alpha}\mathbf{L}(\rho)+1\right)
\label{estimate phi0tilde}%
\end{equation}
where $\mathbf{L}(\rho)\in L_{1}\left(  \mathbb{R}\right)  $ and
(\ref{ineq l/2}) is fulfilled. Let us return to formula (\ref{hD}),%
\begin{equation*}
\left\vert h_{p}^{(0)}\right\vert    \leq\frac{\operatorname*{Const}}{p^{l}%
}\int_{\mathbb{T}}\left\vert \widetilde{\varphi}_{0}^{-(l)}\left(
z(u)\right)  \right\vert \left\vert du\right\vert  =\frac{\operatorname*{Const}}{p^{l}}\int_{\mathbb{R}}\left\vert
\widetilde{\varphi}_{0}^{-(l)}\left(  \rho\right)  \right\vert \frac{d\rho
}{\left\vert \frac{1}{2}-i\rho\right\vert ^{2}}.
\end{equation*}
Due to the estimate (\ref{estimate phi0tilde}) we have
\[
\left\vert h_{p}^{(0)}\right\vert \leq\frac{\operatorname*{Const}}{p^{l}}%
\int_{\mathbb{R}}\left(  \left(  1+\left\vert \rho\right\vert \right)
^{\frac{5}{2}l-\frac{5}{2}-2\alpha}\mathbf{L}(\rho)+\left(  1+\left\vert
\rho\right\vert \right)  ^{-2}\right)  d\rho.
\]
Obviously the last integral converges when
\begin{equation}
\frac{5}{2}l-\frac{5}{2}\leq2\alpha,\label{ineq 5/2}%
\end{equation}
and in this case
\[
\left\vert h_{p}^{(0)}\right\vert \leq\frac{\operatorname*{Const}}{p^{l}}.
\]
Note that for $l\geq1$ (\ref{ineq 5/2}) implies (\ref{ineq l/2}). Thus Theorem
\ref{Th estimate for hp} is proved.
\end{proof}

We estimated Fourier coefficients of the function $\varphi_{0}^{-}\left(
\rho\right)  $ which has the form (\ref{phi0-}) and hence the entries of the
Hankel matrix $\mathbf{H}(\varphi_{0})$ ($=\mathbf{H}(\varphi_{0}^{-})$). At
the same time in the system (\ref{Equation Hankel 3}) the symbol
\begin{equation}
\varphi\left(  \rho\right)  =T_{+}\left(  \rho\right)  \varphi_{0}\left(
\rho\right)  \label{phi via phi0}%
\end{equation}
appears, where $T_{+}\left(  \rho\right)  \in H_{\infty}^{+}\left(
\mathbb{R}\right)  $. For estimating the entries of the matrix $\mathbf{H}%
(\varphi)$ we use the representation%
\begin{equation}
\mathbf{H}(\varphi)=\mathbf{H}(\varphi_{0})T(T_{+}), \label{Hphi factorized}%
\end{equation}
where $T(T_{+}):=P^{+}T_{+}P^{+}$ is a Toeplitz operator, and $P^{+}$ is an
analytic projection operator $P^{+}:L_{2}\left(  \mathbb{R}\right)
\rightarrow H_{2}^{+}\left(  \mathbb{R}\right)  $ which similarly to $P^{-}$
(see Section \ref{Sect Convergence of method of reduction}) can be written in
the form%
\[
P^{+}f\left(  \rho\right)  =\lim_{\varepsilon\rightarrow0+}\frac{1}{2\pi
i}\int_{-\infty}^{\infty}\frac{f\left(  \tau\right)  }{\tau-\left(
\rho+i\varepsilon\right)  }d\tau.
\]
The matrix of the operator $T(T_{+}):L_{2}\left(  \mathbb{R}\right)
\rightarrow H_{2}^{+}\left(  \mathbb{R}\right)  $ in the basis $\left\{
\frac{z^{n}\left(  \rho\right)  }{\frac{1}{2}-i\rho}\right\}  _{n=0}^{\infty}$
(see (\ref{repr y})) can be written as follows%
\begin{equation}
T(T_{+})=\left(
\begin{array}
[c]{cccccc}%
t_{0} & 0 & 0 & \ldots & 0 & \ldots\\
t_{1} & t_{0} & 0 & \ldots & 0 & \ldots\\
t_{2} & t_{1} & t_{0} & \ldots & 0 & \ldots\\
& \ldots &  & \ddots &  & \\
t_{n} & t_{n-1} & t_{n-2} & \ldots & t_{0} & \ldots\\
& \ldots &  & \ldots &  & \ddots
\end{array}
\right)  \label{matrix T}%
\end{equation}
where%
\[
t_{k}=\frac{1}{2\pi}\int_{-\infty}^{\infty}\frac{T_{+}\left(  \rho\right)
z^{-k}\left(  \rho\right)  }{\left(  \frac{1}{2}-i\rho\right)  ^{2}}%
d\rho,\quad k=0,1,\ldots.
\]
With the aid of the representation (\ref{matrix T}) let us prove the following result.

\begin{theorem}
\label{Th asymptotics hp}Let the reflection coefficient have the form
(\ref{R factorized})-(\ref{G-}), and the condition (\ref{cond q}) be
fulfilled. Then the entries of the Hankel matrix $\mathbf{H}(\varphi)$, where
$\varphi\left(  \rho\right)  $ has the form (\ref{symbol phi}), admit the
estimate
\[
\left\vert h_{p}\right\vert =O\left(  \frac{1}{\left\vert p\right\vert
^{l-\frac{1}{2}}}\right)
\]
where $l=\left\lfloor \frac{4}{5}\alpha+1\right\rfloor $.
\end{theorem}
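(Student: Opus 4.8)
The plan is to exploit the factorization \eqref{Hphi factorized}, $\mathbf{H}(\varphi)=\mathbf{H}(\varphi_{0})\,T(T_{+})$, combining the decay of the entries of $\mathbf{H}(\varphi_{0})$ supplied by Theorem \ref{Th estimate for hp} with the only structural information available about the factor $T_{+}$, namely $T_{+}\in H_{\infty}^{+}(\mathbb{R})$, which guarantees that its associated coefficient sequence $\{t_{k}\}$ is square summable but in general not absolutely summable. The extra $\tfrac{1}{2}$ lost in the exponent of $p$ will be precisely the price of the resulting Cauchy--Schwarz step.

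First I would write down the two matrices in the basis $\{z^{n}(\rho)/(\tfrac{1}{2}-i\rho)\}_{n=0}^{\infty}$. By Theorem \ref{Th estimate for hp} the matrix of $\mathbf{H}(\varphi_{0})$ is a Hankel matrix $\{h_{k+m}^{(0)}\}_{k,m=0}^{\infty}$ with $|h_{p}^{(0)}|\le\operatorname{Const}/p^{\,l}$, $l=\lfloor\tfrac{4}{5}\alpha+1\rfloor$; note that $l\ge 1$ because $\alpha\ge 1$, so in particular $\{h_{p}^{(0)}\}\in\ell_{2}$. The matrix of $T(T_{+})$ is the lower triangular Toeplitz matrix \eqref{matrix T}, i.e.\ $(T(T_{+}))_{jn}=t_{j-n}$ for $j\ge n$ and $0$ for $j<n$. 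Multiplying these,
\[
\bigl(\mathbf{H}(\varphi)\bigr)_{mn}=\sum_{j=0}^{\infty}h_{m+j}^{(0)}\,(T(T_{+}))_{jn}=\sum_{k=0}^{\infty}h_{m+n+k}^{(0)}\,t_{k},
\]
which depends only on $p:=m+n$, consistently with $\mathbf{H}(\varphi)$ being a Hankel operator. The rearrangement is legitimate: $\mathbf{H}(\varphi_{0})$ and $T(T_{+})$ are bounded (indeed $\mathbf{H}(\varphi_{0})$ is compact) and the inner product is continuous, while $\sum_{k}h_{m+n+k}^{(0)}t_{k}$ converges absolutely by Cauchy--Schwarz since $\{h_{p}^{(0)}\},\{t_{k}\}\in\ell_{2}$. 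Hence, up to signs that do not affect moduli,
\[
|h_{p}|\le\sum_{k=0}^{\infty}|h_{p+k}^{(0)}|\,|t_{k}|\le\operatorname{Const}\sum_{k=0}^{\infty}\frac{|t_{k}|}{(p+k)^{\,l}}.
\]

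Next I would split this sum at $k=p$ and apply Cauchy--Schwarz to each piece. Since $T_{+}\in H_{\infty}^{+}(\mathbb{R})$, its coefficient sequence is square summable; set $C_{T}:=\bigl(\sum_{k\ge 0}|t_{k}|^{2}\bigr)^{1/2}<\infty$. For the head,
\[
\sum_{k=0}^{p}\frac{|t_{k}|}{(p+k)^{\,l}}\le\frac{1}{p^{\,l}}\sum_{k=0}^{p}|t_{k}|\le\frac{\sqrt{p+1}}{p^{\,l}}\,C_{T}\le\frac{\operatorname{Const}}{p^{\,l-\frac{1}{2}}},
\]
and for the tail, using $2l\ge 2>1$,
\[
\sum_{k=p+1}^{\infty}\frac{|t_{k}|}{(p+k)^{\,l}}\le\sum_{k>p}\frac{|t_{k}|}{k^{\,l}}\le C_{T}\Bigl(\sum_{k>p}\frac{1}{k^{\,2l}}\Bigr)^{1/2}\le\operatorname{Const}\Bigl(\frac{1}{p^{\,2l-1}}\Bigr)^{1/2}=\frac{\operatorname{Const}}{p^{\,l-\frac{1}{2}}}.
\]
Adding the two bounds yields $|h_{p}|=O\!\left(p^{-(l-1/2)}\right)$; since the indices are nonnegative we have $|p|=p$, so this is the claimed estimate, and the finitely many small values of $p$ are harmless because $h_{p}$ is bounded in any case.

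The only genuinely delicate point is the matrix bookkeeping in the second paragraph: one must check that multiplying the Hankel matrix of $\mathbf{H}(\varphi_{0})$ by the \emph{lower triangular} Toeplitz matrix of $T(T_{+})$ again produces a Hankel matrix, and that it is given exactly by the convolution $\sum_{k\ge 0}h_{p+k}^{(0)}t_{k}$ --- it is essential that the shift in the summation index falls on $\{h_{p}^{(0)}\}$, whose decay is controlled by Theorem \ref{Th estimate for hp}, rather than on $\{t_{k}\}$, about which we know less. Everything else is the elementary Cauchy--Schwarz split above, and the loss of exactly $\tfrac{1}{2}$ in the exponent is forced by the available information on $T_{+}$: it would disappear if $T_{+}$ were known to have an absolutely summable coefficient sequence.
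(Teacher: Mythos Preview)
Your argument is correct and follows essentially the same route as the paper: the factorization $\mathbf{H}(\varphi)=\mathbf{H}(\varphi_{0})T(T_{+})$, the matrix-product identity $h_{p}=\sum_{k\ge 0}h^{(0)}_{p+k}t_{k}$, the fact that $\{t_{k}\}\in\ell_{2}$ because $T_{+}/(\tfrac{1}{2}-i\tau)\in L_{2}(\mathbb{R})$, and a Cauchy--Schwarz estimate. The only cosmetic difference is that the paper applies Cauchy--Schwarz once to the full sum, obtaining $|h_{p}|\le\bigl(\sum_{k\ge 0}(p+k)^{-2l}\bigr)^{1/2}\|\{t_{k}\}\|_{\ell_{2}}\le C\,p^{-(l-1/2)}$ directly, so your split at $k=p$ is unnecessary though harmless.
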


\begin{Proof}
According to (\ref{Hphi factorized}) the matrix $\mathbf{H}(\varphi)$ is a
product of two matrices%
\[
\mathbf{H}(\varphi_{0}^{-})=\left(
\begin{array}
[c]{cccccc}%
h_{0}^{\left(  0\right)  } & h_{1}^{\left(  0\right)  } & h_{2}^{\left(
0\right)  } & \ldots & h_{n}^{\left(  0\right)  } & \ldots\\
h_{1}^{\left(  0\right)  } & h_{2}^{\left(  0\right)  } & h_{3}^{\left(
0\right)  } & \ldots & h_{n+1}^{\left(  0\right)  } & \ldots\\
h_{2}^{\left(  0\right)  } & h_{3}^{\left(  0\right)  } & h_{4}^{\left(
0\right)  } & \ldots & h_{n+2}^{\left(  0\right)  } & \ldots\\
& \ldots &  & \ddots &  & \\
h_{n}^{\left(  0\right)  } & h_{n+1}^{\left(  0\right)  } & h_{n+2}^{\left(
0\right)  } & \ldots & h_{2n}^{\left(  0\right)  } & \ldots\\
& \ldots &  & \ldots &  & \ddots
\end{array}
\right)
\]
and $T(T_{+})$ of the form (\ref{matrix T}). Thus, an element of the matrix
$\mathbf{H}(\varphi)$ has the form%
\[
h_{p+k}=\sum_{j=k}^{\infty}h_{p+j}t_{j-k}.
\]
Since $T_{+}\left(  \tau\right)  /\left(  \frac{1}{2}-i\tau\right)  \in
L_{2}\left(  \mathbb{R}\right)  $ and
\[
t_{j}=\frac{1}{2\pi}\int_{\mathbb{R}}\frac{T_{+}\left(  \tau\right)
z_{n}^{-k}\left(  \tau\right)  }{\left(  \frac{1}{2}-i\tau\right)  ^{2}}d\tau
\]
we have that $\left\{  t_{j}\right\}  _{j=0}^{\infty}\in l_{2}\left(
\mathbb{Z}\right)  $. Then due to Theorem \ref{Th estimate for hp},%
\begin{equation*}
\left\vert h_{p+k}\right\vert    \leq\left(  \sum_{j=k}^{\infty}\left\vert
h_{p+j}\right\vert ^{2}\right)  ^{\frac{1}{2}}\left\Vert \left\{
t_{j}\right\}  \right\Vert _{l_{2}\left(  \mathbb{Z}\right)  }  \leq\left(  \sum_{j=k}^{\infty}\frac{1}{\left(  p+j\right)  ^{2l}}\right)
^{\frac{1}{2}}\left\Vert \left\{  t_{j}\right\}  \right\Vert _{l_{2}\left(
\mathbb{Z}\right)  }  \leq\frac{\operatorname*{Const}}{\left(  p+k\right)  ^{l-\frac{1}{2}}}.
\end{equation*}

\end{Proof}

Let us introduce the Wiener classes,%
\[
f\left(  \tau\right)  =\sum_{j=-\infty}^{\infty}f_{j}\frac{z^{j}\left(
\tau\right)  }{\frac{1}{2}-i\tau}\in W^{\beta}\left(  \mathbb{R}\right)
\quad\Leftrightarrow\quad\sum_{j=-\infty}^{\infty}\left\vert f_{j}\right\vert
\left(  1+\left\vert j\right\vert \right)  ^{\beta}<\infty,\,\beta\geq0.
\]
Note that the class $W^{\beta}$ is an algebra.

\begin{theorem}
\label{Th applicability of reduction in Wiener classes}Let the reflection
coefficient have the form (\ref{R factorized})--(\ref{G-}) and the condition
(\ref{cond q}) be fulfilled. Then the finite section method is applicable to
system (\ref{Equation Hankel 3}) in the class $W^{\beta}\left(  \mathbb{R}%
\right)  $ for any $0\leq\beta<l-3/2$ with $l=\left\lfloor \frac{4}{5}%
\alpha+1\right\rfloor $.
\end{theorem}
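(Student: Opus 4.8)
The plan is to reduce the statement to a standard invertibility-plus-compactness argument in the Banach algebra $W^{\beta}(\mathbb R)$, exactly in the spirit of the proof of Theorem \ref{Th reduction in L2}, but carried out in the Wiener class rather than in $L_2^+$ or $l_1$. First I would observe that, by Theorem \ref{Th asymptotics hp}, the entries of the Hankel matrix of $\mathbf H(\varphi)$ satisfy $|h_p|=O(|p|^{-(l-1/2)})$ with $l=\lfloor \tfrac{4}{5}\alpha+1\rfloor$. Consequently, for any $\beta$ with $0\le\beta<l-3/2$ one has $\sum_{p}|h_p|(1+|p|)^{\beta}<\infty$, so the symbol $\mathbf H(\varphi)(1)$ and, more importantly, the matrix $\{h_{k+m}\}$ act boundedly on $W^{\beta}(\mathbb R)$ and $\mathbf H(\varphi)$ maps $W^{\beta}(\mathbb R)$ into itself. (Here one uses that $W^{\beta}$ is an algebra together with the fact that a Hankel operator with absolutely-and-$\beta$-weight summable entries is bounded on the weighted $l_1$-type space underlying $W^{\beta}$.)

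Next I would establish that $\mathbf H(\varphi)$ is in fact \emph{compact} on $W^{\beta}(\mathbb R)$. This is the analogue of the computation in the proof of Theorem \ref{Th H is compact in l1}: write $\mathbf H(\varphi)$ as the norm limit of its finite sections $P_n\mathbf H(\varphi)$, and estimate the tail $\|(\mathbf H(\varphi)-P_n\mathbf H(\varphi))f\|_{W^{\beta}}\le \bigl(\sum_{p\ge n}|h_p|(1+p)^{\beta}\bigr)\|f\|_{W^{\beta}}$, which tends to $0$ as $n\to\infty$ because the weighted series converges. Hence $\mathbf H(\varphi)$ is a limit of finite-rank operators and therefore compact in $W^{\beta}(\mathbb R)$, so $I+\mathbf H(\varphi)$ is Fredholm of index zero there. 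Since $W^{\beta}(\mathbb R)\subset L_2^+(\mathbb R)$ and $I+\mathbf H(\varphi)$ is invertible in $L_2^+(\mathbb R)$ (as already recorded in the excerpt, following \cite{GR2015SIAM}), the kernel of $I+\mathbf H(\varphi)$ restricted to $W^{\beta}$ is trivial; combined with the Fredholm index-zero property this gives invertibility of $I+\mathbf H(\varphi)$ in $W^{\beta}(\mathbb R)$.

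Finally, with compactness of $\mathbf H(\varphi)$ and invertibility of $I+\mathbf H(\varphi)$ in hand, the applicability of the finite section method follows by repeating verbatim the argument of Theorem \ref{Th reduction in L2} with $L_2^+$ replaced by $W^{\beta}$: one shows $P_n(I+\mathbf H(\varphi))P_n$ is invertible on $P_nW^{\beta}$ for large $n$ with uniformly bounded inverses, using that $Q_n(I+\mathbf H(\varphi))^{-1}P_n\to 0$ strongly and $P_n\mathbf H(\varphi)Q_n(I+\mathbf H(\varphi))^{-1}P_n\to 0$ in operator norm (here compactness of $\mathbf H(\varphi)$ is what upgrades strong to norm convergence), and then one proves the approximate solutions converge to the exact one by the same decomposition of $R_nf$ as in that proof. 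I expect the main obstacle to be the verification that the relevant projections $P_n,Q_n$ behave well on $W^{\beta}$ — namely that $\{Q_n\}$ is uniformly bounded and converges strongly to $0$ on $W^{\beta}(\mathbb R)$, which requires the weighted-summability of Fourier coefficients rather than mere square-summability and is the reason the range of admissible $\beta$ is capped by $l-3/2$ (losing a further $1$ beyond the $l-1/2$ decay, exactly to keep the weight $(1+|p|)^{\beta}$ summable against $|h_p|$); the rest is a routine transcription of the earlier proofs.
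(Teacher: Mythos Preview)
Your proposal is correct and follows essentially the same route as the paper's proof: use Theorem~\ref{Th asymptotics hp} to place the Hankel symbol (equivalently, the sequence $\{h_p\}$) in $W^{\beta}(\mathbb{R})$ for $\beta<l-3/2$, invoke the algebra property for boundedness, prove compactness by the tail estimate as in Theorem~\ref{Th H is compact in l1}, obtain invertibility via Fredholm index zero plus the trivial-kernel argument from $L_2^+$, and then transcribe the finite-section argument of Theorem~\ref{Th reduction in L2}. The paper's version is terser but structurally identical, and your identification of the threshold $\beta<l-3/2$ (summability of $|h_p|(1+|p|)^{\beta}$ against the $O(|p|^{-(l-1/2)})$ decay) matches exactly.
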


\begin{Proof}
Obviously,
\[
\mathbf{H}(\varphi)=\mathbf{H}(\varphi_{0}^{-}T_{+})=\mathbf{H}(P^{-}\left(
\varphi_{0}^{-}T_{+}\right)  ).
\]
The symbol of the Hankel operator on the right-hand side has the form%
\[
\psi^{-}\left(  \tau\right)  =P^{-}\left(  \varphi_{0}^{-}\left(  \tau\right)
T_{+}\left(  \tau\right)  \right)  =\sum_{p=0}^{\infty}\frac{h_{p}%
z^{-p}\left(  \tau\right)  }{\frac{1}{2}-i\tau}.
\]
Due to Theorem \ref{Th asymptotics hp} this symbol belongs to $W^{\beta
}\left(  \mathbb{R}\right)  $ for $0\leq\beta<l-3/2$. Since $W^{\beta}\left(
\mathbb{R}\right)  $ is an algebra, the operator $\mathbf{H}(\psi^{-})$ is
bounded in the space $W^{\beta}\left(  \mathbb{R}\right)  $. Moreover,
similarly to Theorem \ref{Th H is compact in l1} it can be shown that
$\mathbf{H}(\psi^{-})$ is compact in that space, and the operator
$I+\mathbf{H}(\psi^{-})$ ($=I+\mathbf{H}(\varphi)$) is invertible in
$W^{\beta}\left(  \mathbb{R}\right)  $ ($\beta<l-3/2$). After that, similarly
to Theorem \ref{Th reduction in L2}, the applicability of the finite section
method to (\ref{Equation Hankel 3}) is proved for $W^{\beta}\left(
\mathbb{R}\right)  $.
\end{Proof}

\section{Existence of derivatives $a_{0}^{\prime}$ and $a_{0}^{\prime\prime}$}

When conditions (\ref{R factorized})--(\ref{Qprime}) are fulfilled equation
(\ref{Equation Hankel}) is uniquely solvable in $L_{2}^{+}\left(
\mathbb{R}\right)  $, the symbol $\varphi\left(  \rho\right)  $ having the
form (\ref{symbol phi}). The solution can be written in the form%
\begin{equation}
y\left(  \rho\right)  :=y\left(  \rho,x\right)  =\frac{1}{\frac{1}{2}-i\rho
}\sum_{j=0}^{\infty}a_{j}(x)z^{n}\left(  \rho\right)  ,\quad\sum_{j=0}%
^{\infty}\left\vert a_{j}(x)\right\vert ^{2}<\infty. \label{y}%
\end{equation}
Moreover, due to Theorem \ref{Th H is compact in l1} we have that
\[
\sum_{j=0}^{\infty}\left\vert a_{j}(x)\right\vert <\infty.
\]

\begin{lemma}
\label{Lemma derivatives in LL}Let $\varphi\left(  \rho\right)  $ have the
form (\ref{symbol phi}) and conditions (\ref{R factorized})--(\ref{Qprime}) be
fulfilled. Then
\[
\frac{\partial^{k}\varphi_{0}^{-}\left(  \rho,x\right)  }{\partial x^{k}}\in
L_{2}\left(  \mathbb{R}\right)  \cap L_{\infty}\left(  \mathbb{R}\right)
\]
where $k=1,2,\ldots,\left\lfloor 2\alpha\right\rfloor $, and the function
$\varphi_{0}^{-}\left(  \rho,x\right)  $ has the form (\ref{phi0-}).
\end{lemma}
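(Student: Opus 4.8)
The plan is to mimic the proof of Theorem~\ref{Th estimate for hp}, but to estimate $\partial_x^k\varphi_0^-(\rho,x)$ pointwise in $\rho$ rather than its Fourier--Laguerre coefficients. The starting observation is that, by \eqref{phi0} and \eqref{s}, the variable $x$ enters $\varphi_0$ only through $e^{iS(\rho,x)}=e^{i(8t\rho^3+2x\rho)}$, so $\partial_x^k\varphi_0(\rho,x)=(2i\rho)^k\varphi_0(\rho,x)$, in agreement with \eqref{symbol der}. (Note that the soft argument --- ``$P^-$ is bounded on $L_2$, hence it suffices that $(2i\rho)^k\varphi_0=(2i\rho)^kG_-e^{iS}\in L_2$'' --- is of no use here: by Plancherel it would demand $k-1$ derivatives of $q$ in $L_2$, whereas only the weighted $L_1$ bound \eqref{cond q} is assumed; the gain must come entirely from the interaction of $P^-$ with the cubic oscillation.) Differentiating \eqref{phi0-} under the integral sign (which is legitimated a posteriori by the bounds obtained in the course of the proof), inserting the representation \eqref{G-}, using $e^{-2is\tau}e^{iS(\tau,x)}=e^{iS(\tau,x-s)}$ and interchanging the order of integration leads to
\[
\frac{\partial^k\varphi_0^-(\rho,x)}{\partial x^k}=\int_0^\infty\Bigl(q(s)\,\widetilde I_k^{(1)}(s,\rho)+Q'(s)\,\widetilde I_k^{(2)}(s,\rho)\Bigr)\,ds,
\]
where $\widetilde I_k^{(j)}(s,\rho):=\frac{1}{2\pi i}\int_{\mathbb{R}}\frac{(2i\tau)^{k-j}e^{iS(\tau,x-s)}}{\tau-\rho}\,d\tau$, $j=1,2$, are oscillatory integrals of exactly the type \eqref{Il1}, \eqref{Il2}, only with $(\tau-\rho)^{-1}$ in place of $(\tau-\rho)^{-(l+1)}$.

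For $s<x$ the phase $S(\cdot,x-s)$ has no real stationary point, and repeated integration by parts makes $\widetilde I_k^{(j)}$ rapidly decaying; this range contributes nothing of concern. For $s>x$ I would apply the change of variables \eqref{change variable tau=}, $\tau=\beta(s)u$, $\rho=\beta(s)\xi$, under which $S(\beta u,x-s)=\Lambda(s)\sigma(u)$ and $\widetilde I_k^{(j)}(s,\rho)=c_{k,j}\,\beta(s)^{k-j}\int_{\mathbb{R}}\frac{u^{k-j}e^{i\Lambda(s)\sigma(u)}}{u-\xi}\,du$, with $\Lambda(s)$ serving as the large parameter. The remaining oscillatory integral is bounded by the saddle-point analysis of \cite{GR2018MathNotes} --- this is the $l=0$ instance of \eqref{bound integral u}: away from the stationary points $u=\pm1$ of $\sigma$ the factor $(u-\xi)^{-1}$ is harmless and stationary phase yields $O(|\xi\mp1|^{-1}\Lambda^{-1/2})$, whereas in the saddle zone $|\xi\mp1|\Lambda^{1/2}\le1$ one obtains $O(1)$. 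Returning to the original variables, this produces pointwise bounds for $\widetilde I_k^{(1)}$, $\widetilde I_k^{(2)}$ in terms of $\beta(s)$, $\Lambda(s)$ and $|\rho\mp\beta(s)|$, completely parallel to those assembled in the proof of Theorem~\ref{Th estimate for hp}.

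It then remains to feed these bounds into the $s$-integrals and estimate them against $|q(s)|$ by means of \eqref{cond q}, and against $|Q'(s)|$ by means of the inclusion $Q'\in L_1(\mathbb{R},(1+|x|)^{\alpha-1})$ that follows from \eqref{Qprime}. Splitting the range of $s$ according to the sets $M^{\pm}$ of \eqref{Mpm} --- whose widths obey \eqref{ineq F 1}--\eqref{asympt F} --- and then integrating the resulting $\rho$-densities over $\mathbb{R}$ as in the passages leading to \eqref{bound1} and \eqref{bound L2 main}, one arrives at an estimate of the form $|\partial_x^k\varphi_0^-(\rho,x)|\le\operatorname{Const}\bigl(\mathbf{L}(\rho)+(1+|\rho|)^{-\delta}\bigr)$ with some $\mathbf{L}\in L_1(\mathbb{R})\cap L_\infty(\mathbb{R})$ and $\delta>1/2$; such a function lies in $L_2(\mathbb{R})\cap L_\infty(\mathbb{R})$, which is the assertion of the lemma.

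The main obstacle --- the point at which the bookkeeping of exponents must be carried out carefully --- is the near-saddle contribution of the second ($Q'$-)term of $G_-$ in \eqref{G-}. On $M^{\pm}$ one has $\widetilde I_k^{(2)}(s,\rho)=O(\beta(s)^{k-2})=O(|\rho|^{k-2})$ while $\int_{M^{\pm}}|Q'(s)|\,ds=O(|\rho|^{-2(\alpha-1)})$, so this term is $O(|\rho|^{k-2\alpha})$; since $Q'$ carries only the weight $(1+|x|)^{\alpha-1}$, it is precisely this term that forces $k\le 2\alpha$, i.e.\ $k\le\lfloor 2\alpha\rfloor$ (the $q$-term tolerates a somewhat larger $k$). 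A secondary technicality is the behaviour as $\rho\to0$, where $G_-$ in \eqref{G-} is a priori singular; this is dealt with exactly as in \cite{GR2020Proc}, \cite{GR2018MathNotes}, the requisite cancellation being encoded in the factorization \eqref{R factorized}, and it does not affect the admissible range of $k$.
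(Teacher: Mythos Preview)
Your setup is exactly the paper's: write $\partial_x^k\varphi_0^-$ as $\int_0^\infty(qL_k^{(1)}+Q'L_k^{(2)})\,ds$, rescale by $\tau=\beta(s)u$, apply the $l=0$ case of the saddle-point bound \eqref{bound integral u}, and split the $s$-integration into the near-saddle sets $M^\pm$ and their complements. Your identification of the bottleneck --- the near-saddle $Q'$-contribution forcing $k\le2\alpha$ --- is correct, and the $L_\infty$ part of the conclusion follows along the lines you indicate.

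The gap is in the $L_2$ half. You claim a pointwise bound of the form $\mathbf{L}(\rho)+(1+|\rho|)^{-\delta}$ with $\mathbf{L}\in L_1\cap L_\infty$ and $\delta>1/2$, obtained ``as in the passages leading to \eqref{bound1} and \eqref{bound L2 main}''. That works for the near-saddle pieces (those integrals over $M^\pm$ are indeed in $L_1(\mathbb{R}_\rho)$, by Fubini and the fact that $|m^\pm(s)|\sim\beta^{-1/2}(s)$). But the far-from-saddle pieces are \emph{not} in $L_1(\mathbb{R}_\rho)$: after Fubini the inner $\rho$-integral is $\int_{F_+(s)}^\infty\frac{d\rho}{\rho-\beta(s)}=+\infty$, and the pointwise decay you would extract is at best $O(\rho^{-1})$, which is borderline and not $(1+|\rho|)^{-\delta}$ with $\delta>1/2$. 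So the $L_1\cap L_\infty\Rightarrow L_2$ route is blocked.

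The paper closes this by computing the $L_2$ norm of the far-from-saddle pieces directly: squaring produces a double $s,s'$ integral with inner kernel
\[
\mathcal{B}_\pm(s,s')=\int\frac{d\rho}{(\rho-\beta(s))(\rho-\beta(s'))},
\]
which is evaluated explicitly via the logarithm and shown to satisfy $\mathcal{B}_\pm(s,s')\le C\,\beta^{1/2}(\min(s,s'))$. Feeding this back gives $\|L_k^{(3)}\|_{L_2}^2,\|L_k^{(4)}\|_{L_2}^2\le C\bigl(\int_0^\infty|Q'(s)|(1+s)^{k/2-1}ds\bigr)^2$, finite precisely when $k\le2\alpha$. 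This two-variable computation is the missing ingredient; once you insert it, your argument becomes the paper's.
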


\begin{proof}
According to (\ref{phi0}), (\ref{phi0-}) we have%
\[
\frac{\partial^{k}\varphi_{0}^{-}\left(  \rho,x\right)  }{\partial x^{k}%
}=\frac{\left(  2i\right)  ^{k}}{2\pi i}\int_{-\infty}^{\infty}\frac{\tau
^{k}G_{-}(\tau)e^{iS(\tau,x)}}{\tau-\rho}d\tau.
\]
Substituting (\ref{G-}) and changing the order of integration we obtain%
\begin{equation}
\frac{\partial^{k}\varphi_{0}^{-}\left(  \rho,x\right)  }{\partial x^{k}%
}=\frac{\left(  2i\right)  ^{k}}{2\pi i}\int_{0}^{\infty}\left(
q(s)L_{k}^{\left(  1\right)  }\left(  s,\rho\right)  +Q^{\prime}%
(s)L_{k}^{\left(  2\right)  }\left(  s,\rho\right)  \right)  ds\label{der=int}%
\end{equation}
where%
\begin{equation}
L_{k}^{\left(  1\right)  }\left(  s,\rho\right)  =\int_{-\infty}^{\infty}%
\frac{e^{iS(\tau,x-s)}\tau^{k-1}}{\tau-\rho}d\tau\label{Lk1}%
\end{equation}
and
\begin{equation}
L_{k}^{\left(  2\right)  }\left(  s,\rho\right)  =\int_{-\infty}^{\infty}%
\frac{e^{iS(\tau,x-s)}\tau^{k-2}}{\tau-\rho}d\tau.\label{Lk2}%
\end{equation}
Let us apply the change of the variable (\ref{change variable tau=}) to the
integral $L_{k}^{\left(  2\right)  }\left(  s,\rho\right)  $. We obtain%
\[
L_{k}^{\left(  2\right)  }\left(  s,\rho\right)  =\beta^{k-2}(s)\int_{-\infty
}^{\infty}\frac{u^{k-2}e^{i\Lambda(s)\sigma(u)}}{u-\xi}du.
\]
Similarly to (\ref{bound integral u}) we have%
\[
L_{k}^{\left(  2\right)  }\left(  s,\rho\right)  \leq\operatorname*{Const}%
\beta^{k-2}(s)\left\{
\begin{array}
[c]{c}%
\frac{1}{\left\vert \xi\mp1\right\vert \Lambda^{\frac{1}{2}}\left(  s\right)
},\quad\left\vert \xi\mp1\right\vert \Lambda^{\frac{1}{2}}\left(  s\right)
\geq1,\\
1,\quad\left\vert \xi\mp1\right\vert \Lambda^{\frac{1}{2}}\left(  s\right)
\leq1.
\end{array}
\right.
\]
Thus, the second part of the integral (\ref{der=int}) can be estimated as
follows%
\begin{align}
\left\vert \int_{0}^{\infty}Q^{\prime}(s)L_{k}^{\left(  2\right)  }\left(
s,\rho\right)  ds\right\vert  &  \leq\operatorname*{Const}(\int_{M^{+}%
}\left\vert Q^{\prime}(s)\right\vert \beta^{k-2}(s)ds+\int_{\mathbb{R}%
\backslash M^{+}}\frac{\left\vert Q^{\prime}(s)\right\vert \beta^{k-2}%
(s)}{\left\vert \xi-1\right\vert \Lambda^{\frac{1}{2}}\left(  s\right)
}ds\nonumber\\
&  +\int_{M^{-}}\left\vert Q^{\prime}(s)\right\vert \beta^{k-2}(s)ds+\int
_{\mathbb{R}\backslash M^{-}}\frac{\left\vert Q^{\prime}(s)\right\vert
\beta^{k-2}(s)}{\left\vert \xi+1\right\vert \Lambda^{\frac{1}{2}}\left(
s\right)  }ds\label{int2}%
\end{align}
where the sets $M^{\pm}$ are defined by (\ref{Mpm}). Denote the first integral
in (\ref{int2}) by $\mathcal{L}_{1}^{+}\left(  \rho\right)  $. Then
\[
\mathcal{L}_{1}^{+}\left(  \rho\right)  \leq\operatorname*{Const}\int_{M^{+}%
}\left\vert Q^{\prime}(s)\right\vert \left(  1+\left\vert s\right\vert
\right)  ^{\frac{k-2}{2}}(s)ds.
\]
Thus, if $\frac{k-2}{2}\leq\alpha-1$ or which is the same $k\leq2\alpha$, then
the function $\mathcal{L}_{1}^{+}\left(  \rho\right)  \in L_{\infty}\left(
\mathbb{R}\right)  $. Let us find now the condition of belonging of
$\mathcal{L}_{1}^{+}\left(  \rho\right)  $ to $L_{2}\left(  \mathbb{R}\right)
$. We have
\begin{equation}
\left\Vert \mathcal{L}_{1}^{+}\left(  \rho\right)  \right\Vert _{L_{2}\left(
\mathbb{R}\right)  }^{2}=\int_{-\infty}^{\infty}\left(  \int_{M^{+}}\left\vert
Q^{\prime}(s)\right\vert \beta^{k-2}(s)ds\right)  \left(  \int_{M^{+}%
}\left\vert Q^{\prime}(s^{\prime})\right\vert \overline{\beta}^{k-2}%
(s^{\prime})ds^{\prime}\right)  d\rho.\label{norm squared 1}%
\end{equation}
This triple (iterated) integral is taken over the domain defined by the
relations%
\[
\left\{
\begin{array}
[c]{c}%
-\infty<\rho<\infty,\\
a_{l}\left(  \rho\right)  \leq s\leq b_{l}\left(  \rho\right)  ,\\
a_{l}\left(  \rho\right)  \leq s^{\prime}\leq b_{l}\left(  \rho\right)  ,
\end{array}
\right.
\]
where $a_{l}\left(  \rho\right)  :=F_{+}^{-1}(\rho)$ and $b_{l}\left(
\rho\right)  :=F_{-}^{-1}(\rho)$ are defined by (\ref{Mpm}). Indeed,
(\ref{Mpm}) gives us the inequalities%
\[
\beta(s)-d\beta^{-\frac{1}{2}}(s)\leq\rho\leq\beta(s)+d\beta^{-\frac{1}{2}}(s)
\]
where the constant $d$ is calculated by (\ref{d(t)}).
Let us change the order of integration in (\ref{norm squared 1}),%
\[
\left\Vert \mathcal{L}_{1}^{+}\left(  \rho\right)  \right\Vert _{L_{2}\left(
\mathbb{R}\right)  }^{2}=2\int_{0}^{\infty}\int_{0}^{\infty}\left\vert
Q^{\prime}(s)\right\vert \beta^{k-2}(s)\left\vert Q^{\prime}(s^{\prime
})\right\vert \beta^{k-2}(s^{\prime})\left(  \int_{\widetilde{F}_{-}%
}^{\widetilde{F}_{+}}d\rho\right)  dsds^{\prime}%
\]
where $\widetilde{F}_{+}=\min\left(  F_{+}(s),F_{+}(s^{\prime})\right)  $ and
$\widetilde{F}_{-}=\max\left(  F_{-}(s),F_{-}(s^{\prime})\right)  $. Note that
if $s>s^{\prime}$ then $\widetilde{F}_{+}=F_{+}(s^{\prime})$ and
$\widetilde{F}_{-}=F_{-}(s^{\prime})$. If $s\leq s^{\prime}$, then
$\widetilde{F}_{\pm}=F_{\pm}(s)$. Thus we obtain%
\begin{align*}
\left\Vert \mathcal{L}_{1}^{+}\left(  \rho\right)  \right\Vert _{L_{2}\left(
\mathbb{R}\right)  }^{2} &  =2\int_{0}^{\infty}\int_{0}^{\infty}\left\vert
Q^{\prime}(s)\right\vert \beta^{k-2}(s)\left\vert Q^{\prime}(s^{\prime
})\right\vert \beta^{k-2}(s^{\prime})\left(  \widetilde{F}_{+}-\widetilde
{F}_{-}\right)  dsds^{\prime}\\
&  =2\int_{0}^{\infty}\left\vert Q^{\prime}(s)\right\vert \beta^{k-2}%
(s)\left(  \int_{0}^{s}\left(  \left\vert Q^{\prime}(s^{\prime})\right\vert
\beta^{k-2}(s^{\prime})\right)  \left(  F_{+}(s^{\prime})-F_{-}(s^{\prime
})ds^{\prime}\right)  \right)  ds\\
&  +2\int_{0}^{\infty}\left\vert Q^{\prime}(s^{\prime})\right\vert \beta
^{k-2}(s^{\prime})\left(  \int_{0}^{s^{\prime}}\left(  \left\vert Q^{\prime
}(s)\right\vert \beta^{k-2}(s)\right)  \left(  F_{+}(s)-F_{-}(s)ds\right)
\right)  ds^{\prime}%
\end{align*}
Since $F_{+}(s)-F_{-}(s)=2d\beta^{-\frac{1}{2}}(s)$ and
\[
\int_{0}^{s}\left\vert Q^{\prime}(s^{\prime})\right\vert \beta^{k-2}%
(s^{\prime})\beta^{-\frac{1}{2}}(s^{\prime})ds^{\prime}\leq\int_{0}^{\infty
}\left\vert Q^{\prime}(s^{\prime})\right\vert \beta^{k-5/2}(s^{\prime
})ds^{\prime},
\]
we obtain%
\begin{align*}
\left\Vert \mathcal{L}_{1}^{+}\left(  \rho\right)  \right\Vert _{L_{2}\left(
\mathbb{R}\right)  }^{2} &  \leq2d(\int_{0}^{\infty}\left\vert Q^{\prime
}(s)\right\vert \beta^{k-2}(s)ds\int_{0}^{\infty}\left\vert Q^{\prime
}(s^{\prime})\right\vert \beta^{k-5/2}(s^{\prime})ds^{\prime}\\
&  +\int_{0}^{\infty}\left\vert Q^{\prime}(s)\right\vert \beta^{k-5/2}%
(s)ds\int_{0}^{\infty}\left\vert Q^{\prime}(s^{\prime})\right\vert \beta
^{k-2}(s^{\prime})ds^{\prime}\\
&  \leq4d\left(  \int_{0}^{\infty}\left\vert Q^{\prime}(s)\right\vert
\beta^{k-5/2}(s)ds\right)  ^{2}.
\end{align*}
Thus,%
\[
\left\Vert \mathcal{L}_{1}^{+}\left(  \rho\right)  \right\Vert _{L_{2}\left(
\mathbb{R}\right)  }\leq\operatorname*{Const}\int_{0}^{\infty}\left\vert
Q^{\prime}(s)\right\vert \left(  1+s\right)  ^{k/2-5/4}ds.
\]
The last integral converges if $k/2-5/4\leq\alpha-1$ or which is the same
$k\leq2\alpha+1/2$. Now let us consider the second integral in (\ref{int2}).
Note that it in turn splits in two integrals, since
\[
\mathbb{R}\backslash M_{l}^{+}=\left(  0,a_{l}\right)  \cup\left(
b_{l},\infty\right)  .
\]
For the integral on $\left(  0,a_{l}\right)  $ we have%
\begin{equation*}
L_{k}^{\left(  3\right)  }\left(  \rho\right)     =\int_{0}^{a_{l}}%
\frac{\left\vert Q^{\prime}(s)\right\vert \beta^{k-2}(s)ds}{\left\vert
\xi-1\right\vert \Lambda^{\frac{1}{2}}(s)}\leq\int_{0}^{a_{l}}\left\vert
Q^{\prime}(s)\right\vert \beta^{k-2}(s)ds  \leq\operatorname*{Const}\int_{0}^{\infty}\left\vert Q^{\prime
}(s)\right\vert \left(  1+s\right)  ^{\frac{k}{2}-1}ds.
\end{equation*}
Thus,
\[
L_{k}^{\left(  3\right)  }\left(  \rho\right)  \in L_{\infty}\left(
\mathbb{R}\right)  \quad\text{if }k\leq2\alpha.
\]
Let us show that additionally under the same condition ($k\leq2\alpha$) the
inclusion $L_{k}^{\left(  3\right)  }\left(  \rho\right)  \in L_{2}\left(
\mathbb{R}\right)  $ is valid as well. We have%
\begin{align*}
&\left\Vert L_{k}^{\left(  3\right)  }\left(  \rho\right)  \right\Vert
_{L_{2}\left(  \mathbb{R}\right)  }^{2}   =\int_{0}^{\infty}\left(  \int
_{0}^{a_{l}}\frac{\left\vert Q^{\prime}(s)\right\vert \beta^{k-2}%
(s)ds}{\left(  \rho-\beta(s)\right)  \Lambda^{\frac{1}{2}}(s)/\beta
(s)}\right)  \left(  \int_{0}^{a_{l}}\frac{\left\vert Q^{\prime}(s^{\prime
})\right\vert \beta^{k-2}(s^{\prime})ds^{\prime}}{\left(  \rho-\beta
(s^{\prime})\right)  \Lambda^{\frac{1}{2}}(s^{\prime})/\beta(s^{\prime}%
)}\right)  d\rho\\
&  =\operatorname*{Const}\int_{0}^{\infty}\int_{0}^{\infty}\left\vert
Q^{\prime}(s)\right\vert \beta^{k-5/2}(s)\left\vert Q^{\prime}(s^{\prime
})\right\vert \beta^{k-5/2}(s^{\prime})\left(  \int_{\mathcal{F}(s,s^{\prime
})}^{\infty}\frac{d\rho}{\left(  \rho-\beta(s)\right)  \left(  \rho
-\beta(s^{\prime})\right)  }\right)  dsds^{\prime}%
\end{align*}
where $\mathcal{F}(s,s^{\prime})=\max(F_{+}(s),F_{+}(s^{\prime}))$. Let us
calculate the interior integral. When $s\geq s^{\prime}$ we have that
$\mathcal{F}(s,s^{\prime})=F_{+}(s)$. Denote
\begin{align*}
\mathcal{B}_{+}\left(  s,s^{\prime}\right)   &  :=\int_{F_{+}(s)}^{\infty
}\frac{d\rho}{\left(  \rho-\beta(s)\right)  \left(  \rho-\beta(s^{\prime
})\right)  }=\frac{1}{\beta(s)-\beta(s^{\prime})}\left.  \ln\frac{\rho
-\beta(s)}{\rho-\beta(s^{\prime})}\right\vert _{\rho=F_{+}(s)}\\
&  =-\frac{1}{\beta(s)-\beta(s^{\prime})}\ln\frac{F_{+}(s)-\beta(s)}%
{F_{+}(s)-\beta(s^{\prime})}=-\frac{1}{\beta(s)-\beta(s^{\prime})}\ln\left(
\frac{d\beta^{-\frac{1}{2}}(s)}{\beta(s)-\beta(s^{\prime})+d\beta^{-\frac
{1}{2}}(s)}\right)  \\
&  =\frac{1}{\beta(s)-\beta(s^{\prime})}\ln\left(  1+\frac{\left(
\beta(s)-\beta(s^{\prime})\right)  \beta^{\frac{1}{2}}(s)}{d}\right)
=\frac{\beta^{\frac{1}{2}}(s)}{d}\frac{1}{z}\ln\left(  1+z\right)
\end{align*}
where $z=\beta^{\frac{1}{2}}(s)\left(  \beta(s)-\beta(s^{\prime})\right)
/d\geq0$. Since
\[
\frac{1}{z}\ln\left(  1+z\right)  <1\quad\text{for }z>0,
\]
we obtain
\begin{equation}
\mathcal{B}_{+}\left(  s,s^{\prime}\right)  \leq\operatorname*{Const}\left\{
\begin{array}
[c]{c}%
\beta^{\frac{1}{2}}(s),\quad s\geq s^{\prime},\\
\beta^{\frac{1}{2}}(s^{\prime}),\quad s\leq s^{\prime}.
\end{array}
\right.  \label{B+ ineq}%
\end{equation}
The second line in (\ref{B+ ineq}) is obtained similarly to the first one.
Thus,%
\begin{align*}
\left\Vert L_{k}^{\left(  3\right)  }\left(  \rho\right)  \right\Vert
_{L_{2}\left(  \mathbb{R}\right)  }^{2} &  \leq\operatorname*{Const}(\int
_{0}^{\infty}\left\vert Q^{\prime}(s)\right\vert \left(  1+s\right)
^{\frac{k}{2}-\frac{5}{4}}\beta^{\frac{1}{2}}(s)\left(  \int_{0}^{s}\left\vert
Q^{\prime}(s^{\prime})\right\vert \left(  1+s^{\prime}\right)  ^{\frac{k}%
{2}-1}ds^{\prime}\right)  ds\\
&  +\int_{0}^{\infty}\left\vert Q^{\prime}(s^{\prime})\right\vert \left(
1+s^{\prime}\right)  ^{\frac{k}{2}-\frac{5}{4}}\beta^{\frac{1}{2}}(s^{\prime
})\left(  \int_{0}^{s^{\prime}}\left\vert Q^{\prime}(s)\right\vert \left(
1+s\right)  ^{\frac{k}{2}-1}ds\right)  ds^{\prime})\\
&  \leq\operatorname*{Const}\left(  \int_{0}^{\infty}\left\vert Q^{\prime
}(s)\right\vert \left(  1+s\right)  ^{\frac{k}{2}-1}ds\right)  ^{2}.
\end{align*}
Hence, if $\frac{k}{2}-1\leq\alpha-1$ $\Leftrightarrow$ $k\leq2\alpha$ we
have
\[
\left\Vert L_{k}^{\left(  3\right)  }\left(  \rho\right)  \right\Vert
_{L_{2}\left(  \mathbb{R}\right)  }^{2}\leq\operatorname*{Const}.
\]
It remains to consider the part of the second integral from (\ref{int2}) on
the interval $\left(  b_{l},\infty\right)  $. Denote%
\[
L_{k}^{\left(  4\right)  }\left(  \rho\right)  :=\int_{b_{l}}^{\infty}%
\frac{\left\vert Q^{\prime}(s)\right\vert \left\vert \beta(s)\right\vert
^{k-2}ds}{\left\vert \xi-1\right\vert \Lambda^{\frac{1}{2}}(s)}.
\]
Analogously to the previous, $L_{k}^{\left(  4\right)  }\left(  \rho\right)
\in L_{\infty}\left(  \mathbb{R}\right)  $ if $k\leq2\alpha$. Let us consider
the question of belonging of $L_{k}^{\left(  4\right)  }\left(  \rho\right)  $
to $L_{2}\left(  \mathbb{R}\right)  $. We have
\[
\left\Vert L_{k}^{\left(  4\right)  }\left(  \rho\right)  \right\Vert
_{L_{2}\left(  \mathbb{R}\right)  }^{2}=\int_{0}^{\infty}\int_{0}^{\infty
}\left\vert Q^{\prime}(s)\right\vert \beta(s)\left\vert ^{k-5/2}Q^{\prime
}(s^{\prime})\right\vert \left\vert \beta(s^{\prime})\right\vert
^{k-5/2}\left(  \int_{0}^{\widetilde{F}_{-}}\frac{d\rho}{\left(  \beta
(s)-\rho\right)  \left(  \beta(s^{\prime})-\rho\right)  }\right)
dsds^{\prime}%
\]
where $\widetilde{F}_{-}=\min(F_{-}(s),F_{-}(s^{\prime}))$. Denoting the last
integral by $\mathcal{B}_{-}\left(  s,s^{\prime}\right)  $ we obtain in the
case $s\leq s^{\prime}$ that%
\begin{align*}
\mathcal{B}_{-}\left(  s,s^{\prime}\right)   &  =\frac{1}{\beta(s^{\prime
})-\beta(s)}\left.  \left(  \ln\frac{\beta(s^{\prime})-\rho}{\beta(s)-\rho
}\right)  \right\vert _{0}^{F_{-}(s)}\\
&  =-\frac{1}{\beta(s^{\prime})-\beta(s)}\left(  \ln\frac{\beta(s^{\prime}%
)}{\beta(s)}-\ln\frac{\beta(s^{\prime})-\left(  \beta(s)-d\beta^{-\frac{1}{2}%
}(s)\right)  }{d\beta^{-\frac{1}{2}}(s)}\right)  \\
&  =\frac{1}{\beta(s^{\prime})-\beta(s)}\ln\left(  1+\frac{\left(
\beta(s^{\prime})-\beta(s)\right)  \beta^{\frac{1}{2}}(s)}{d}\right)
-\frac{1}{\beta(s^{\prime})-\beta(s)}\ln\frac{\beta(s^{\prime})}{\beta(s)}.
\end{align*}
Similarly to (\ref{B+ ineq}) we obtain%
\[
\mathcal{B}_{-}\left(  s,s^{\prime}\right)  \leq\operatorname*{Const}\left\{
\begin{array}
[c]{c}%
\beta^{\frac{1}{2}}(s),\quad s\leq s^{\prime},\\
\beta^{\frac{1}{2}}(s^{\prime}),\quad s\geq s^{\prime}.
\end{array}
\right.
\]
Thus,%
\[
\left\Vert L_{k}^{\left(  4\right)  }\left(  \rho\right)  \right\Vert
_{L_{2}\left(  \mathbb{R}\right)  }^{2}\leq\operatorname*{Const}\left(
\int_{0}^{\infty}\left\vert Q^{\prime}(s)\right\vert \left(  1+s\right)
^{\frac{k}{2}-1}ds\right)  ^{2}%
\]
and hence, as in the previous cases, under the condition $k\leq2\alpha$ the
inclusion $L_{k}^{\left(  4\right)  }\left(  \rho\right)  \in L_{2}\left(
\mathbb{R}\right)  $ is valid. Thus, if $k\leq2\alpha$ the first two terms in
(\ref{int2}) belong to $L_{\infty}\left(  \mathbb{R}\right)  \cap L_{2}\left(
\mathbb{R}\right)  $. The same condition guarantees the belonging of the other
two terms in (\ref{int2}) to the same class, and hence we proved the belonging
of the integral (\ref{Lk2}) to $L_{\infty}\left(  \mathbb{R}\right)  \cap
L_{2}\left(  \mathbb{R}\right)  $. The integral (\ref{Lk1}) can be studied
analogously, by replacing $k$ with $k+1$ and $\alpha-1$ with $\alpha$. That is
the condition $k\leq2\alpha$ becomes $k\leq2\alpha+1$, and Lemma
\ref{Lemma derivatives in LL} is proved.
\end{proof}

\begin{theorem}
Let the function $\varphi\left(  \rho\right)  $ have the form
(\ref{symbol phi}) and the conditions (\ref{R factorized})--(\ref{Qprime}) be
fulfilled. Then for the solution $y(\rho,x)$ of (\ref{Equation Hankel}) the
inclusion is valid%
\[
\frac{\partial^{k}y\left(  \rho,x\right)  }{\partial x^{k}}\in L_{2}%
^{+}\left(  \mathbb{R}\right)  \quad\text{for }k=0,1,\ldots,\left\lfloor
2\alpha\right\rfloor .
\]

\end{theorem}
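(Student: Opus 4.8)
The plan is an induction on $k$. The case $k=0$ is the known unique solvability of (\ref{Equation Hankel}) in $L_{2}^{+}(\mathbb{R})$, which moreover furnishes the expansion (\ref{y}). To pass from $k-1$ to $k$ (with $k\le\lfloor 2\alpha\rfloor$) I would differentiate the equation, written as $y=-\mathbf{H}(\varphi)(y+1)$, $k$ times in $x$; the two ingredients needed are the boundedness on $L_{2}^{+}(\mathbb{R})$ of the operators $\partial_x^{j}\mathbf{H}(\varphi)$ for $j\le\lfloor 2\alpha\rfloor$, and the inclusion $\partial_x^{k}\bigl(\mathbf{H}(\varphi)(1)\bigr)\in L_{2}^{+}(\mathbb{R})$. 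For the first, differentiating the factorization (\ref{Hphi factorized}) and using that $T_{+}$ does not depend on $x$ while the entire $x$-dependence of $\varphi=T_{+}\varphi_{0}$ (see (\ref{phi via phi0})) is carried by the factor $e^{iS(\rho,x)}$ inside $\varphi_{0}$, together with the fact that a Hankel operator is determined by the $P^{-}$-part of its symbol, one obtains
$$\frac{\partial^{j}}{\partial x^{j}}\mathbf{H}(\varphi)=\mathbf{H}\!\left(\frac{\partial^{j}\varphi_{0}^{-}}{\partial x^{j}}\right)T(T_{+}),\qquad j=1,\dots,\lfloor 2\alpha\rfloor .$$
By Lemma \ref{Lemma derivatives in LL} the symbol $\partial_x^{j}\varphi_{0}^{-}$ lies in $L_{\infty}(\mathbb{R})$ for such $j$, so $\mathbf{H}(\partial_x^{j}\varphi_{0}^{-})$ is bounded by Nehari's theorem, and, $T(T_{+})$ being bounded because $T_{+}\in H_{\infty}^{+}(\mathbb{R})$, the operator $\partial_x^{j}\mathbf{H}(\varphi)$ is bounded on $L_{2}^{+}(\mathbb{R})$ for every $j\le\lfloor 2\alpha\rfloor$ (in fact it is nuclear, cf. \cite{GR2015SIAM}, \cite{GR2018MathNotes}).

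For the second ingredient, from the definition $\mathbf{H}(\varphi)y=JP^{-}(\varphi y)$ we have $\mathbf{H}(\varphi)(1)=JP^{-}\varphi=JP^{-}(T_{+}\varphi_{0})=JP^{-}(T_{+}\varphi_{0}^{-})$, the last equality because $P^{-}$ annihilates $T_{+}$ times the part of $\varphi_{0}$ that extends analytically into the upper half-plane. Differentiating in $x$ and invoking Lemma \ref{Lemma derivatives in LL} ($\partial_x^{k}\varphi_{0}^{-}\in L_{2}(\mathbb{R})$ for $k\le\lfloor 2\alpha\rfloor$, whence $T_{+}\partial_x^{k}\varphi_{0}^{-}\in L_{2}(\mathbb{R})$), we get $\partial_x^{k}\bigl(\mathbf{H}(\varphi)(1)\bigr)=JP^{-}\bigl(T_{+}\partial_x^{k}\varphi_{0}^{-}\bigr)\in L_{2}^{+}(\mathbb{R})$. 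Now, assuming $\partial_x^{j}y\in L_{2}^{+}(\mathbb{R})$ for $0\le j\le k-1$, the Leibniz rule applied to $y=-\mathbf{H}(\varphi)(y+1)$ (note $\partial_x^{m}(y+1)=\partial_x^{m}y$ for $m\ge 1$) gives
$$(I+\mathbf{H}(\varphi))\,\partial_x^{k}y=-\sum_{j=1}^{k-1}\binom{k}{j}\bigl(\partial_x^{j}\mathbf{H}(\varphi)\bigr)\partial_x^{k-j}y-\bigl(\partial_x^{k}\mathbf{H}(\varphi)\bigr)y-\frac{\partial^{k}}{\partial x^{k}}\bigl(\mathbf{H}(\varphi)(1)\bigr).$$
Every term on the right belongs to $L_{2}^{+}(\mathbb{R})$: those in the sum because $\partial_x^{j}\mathbf{H}(\varphi)$ is bounded on $L_{2}^{+}(\mathbb{R})$ (as $j\le k-1\le\lfloor 2\alpha\rfloor$) and $\partial_x^{k-j}y\in L_{2}^{+}(\mathbb{R})$ by hypothesis; the middle term likewise; the last by the preceding paragraph. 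Since $I+\mathbf{H}(\varphi)$ is invertible in $L_{2}^{+}(\mathbb{R})$, it follows that $\partial_x^{k}y\in L_{2}^{+}(\mathbb{R})$, closing the induction.

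The step that demands genuine care — and which I regard as the main obstacle — is to legitimise this term-by-term differentiation, i.e. to know in advance that $x\mapsto y(\cdot,x)$ is $k$ times differentiable as a map into $L_{2}^{+}(\mathbb{R})$. I would establish this in parallel with the induction, from three facts: $x\mapsto\mathbf{H}(\varphi(\cdot,x))$ is $C^{k}$ in the operator norm on $L_{2}^{+}(\mathbb{R})$, which follows from the factorization above and the continuity in $L_{2}(\mathbb{R})$ of $x\mapsto\partial_x^{j}\varphi_{0}^{-}(\cdot,x)$ (a consequence of the integral representations and the bounds obtained in the proof of Lemma \ref{Lemma derivatives in LL}; the nuclearity proved in \cite{GR2015SIAM}, \cite{GR2018MathNotes} is amply enough); $x\mapsto\mathbf{H}(\varphi(\cdot,x))(1)$ is $C^{k}$ as an $L_{2}^{+}(\mathbb{R})$-valued map, by the same ingredients applied to $JP^{-}\bigl(T_{+}\varphi_{0}^{-}(\cdot,x)\bigr)$; and $I+\mathbf{H}(\varphi(\cdot,x))$ is invertible with inverse bounded uniformly on compact $x$-intervals, which follows from the continuity just mentioned together with a Neumann-series argument. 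Then $y(\cdot,x)=-\bigl(I+\mathbf{H}(\varphi(\cdot,x))\bigr)^{-1}\mathbf{H}(\varphi(\cdot,x))(1)$ is a composition of $C^{k}$ Banach-space-valued maps, hence $C^{k}$, so $\partial_x^{k}y(\cdot,x)$ exists in $L_{2}^{+}(\mathbb{R})$ and satisfies the displayed identity, which completes the proof for $k=0,1,\dots,\lfloor 2\alpha\rfloor$.
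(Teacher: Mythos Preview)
Your proposal is correct and follows essentially the same approach as the paper: differentiate the Hankel equation in $x$, use the factorization $\partial_x^{j}\mathbf{H}(\varphi)=\mathbf{H}(\partial_x^{j}\varphi_0^{-})T(T_{+})$, invoke Lemma~\ref{Lemma derivatives in LL} to put the right-hand side in $L_2$, and conclude via the invertibility of $I+\mathbf{H}(\varphi)$. The paper carries this out explicitly for $k=1$ and $k=2$ and then says ``the case $k>2$ is considered analogously,'' whereas you write the general inductive step directly with the Leibniz rule; your added paragraph justifying the term-by-term differentiation (via $C^k$ dependence of $\mathbf{H}(\varphi(\cdot,x))$ and of the right-hand side on $x$) is a point the paper treats only formally.
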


\begin{proof}
Due to \cite{GR2015SIAM} equation (\ref{Equation Hankel}) is uniquely
solvable, and the solution belongs to $L_{2}^{+}\left(  \mathbb{R}\right)  $
and has the form (\ref{y}). Let us differentiate (\ref{Equation Hankel}) with
respect to $x$. We have%
\begin{equation}
\left(  \left(  I+\mathbf{H}(\varphi)\right)  \frac{\partial y}{\partial
x}\right)  \left(  \rho\right)  =-\mathbf{H}(\frac{\partial\varphi}{\partial
x})\left(  1\right)  \left(  \rho\right)  -\mathbf{H}(\frac{\partial\varphi
}{\partial x})\left(  y\right)  \left(  \rho\right)  .\label{vsp1}%
\end{equation}
Since $\mathbf{H}(\varphi)=\mathbf{H}(\varphi_{0})T\left(  T_{+}\right)  $
(see (\ref{Hphi factorized})) and $\mathbf{H}(\varphi_{0})=\mathbf{H}%
(\varphi_{0}^{-})$ (see (\ref{phi0-})), then%
\[
\mathbf{H}\left(  \frac{\partial\varphi}{\partial x}\right)  =\mathbf{H}%
\left(  \frac{\partial\varphi_{0}^{-}}{\partial x}\right)  T\left(
T_{+}\right)  .
\]
Due to Lemma \ref{Lemma derivatives in LL}, for $\alpha\geq1$ we have
$\frac{\partial\varphi_{0}^{-}}{\partial x}\in L_{2}\left(  \mathbb{R}\right)
$, and the first term on the right-hand side of\ (\ref{vsp1}) is a function of
the class $L_{2}\left(  \mathbb{R}\right)  $. Additionally, again due to Lemma
\ref{Lemma derivatives in LL}, $\frac{\partial\varphi_{0}^{-}}{\partial x}\in
L_{\infty}\left(  \mathbb{R}\right)  $. Hence
\[
\mathbf{H}\left(  \frac{\partial\varphi}{\partial x}\right)  \left(  y\right)
\left(  \rho\right)  =\mathbf{H}\left(  \frac{\partial\varphi_{0}^{-}%
}{\partial x}\right)  \left(  T_{+}y\right)  \in L_{2}\left(  \mathbb{R}%
\right)  .
\]
Thus, the right-hand side of (\ref{vsp1}) belongs to $L_{2}\left(
\mathbb{R}\right)  $. Hence equation (\ref{vsp1}) is uniquely solvable in
$L_{2}\left(  \mathbb{R}\right)  $, and
\begin{equation}
\frac{\partial y\left(  \rho,x\right)  }{\partial x}\in L_{2}\left(
\mathbb{R}\right)  .\label{dy in L2}%
\end{equation}
Differentiating (\ref{vsp1}) with respect to $x$ we obtain
\[
\left(  \left(  I+\mathbf{H}(\varphi)\right)  \frac{\partial^{2}y}{\partial
x^{2}}\right)  \left(  \rho\right)  =-\mathbf{H}\left(  \frac{\partial
^{2}\varphi}{\partial x^{2}}\right)  \left(  1\right)  \left(  \rho\right)
-\mathbf{H}\left(  \frac{\partial^{2}\varphi}{\partial x^{2}}\right)  \left(
y\right)  \left(  \rho\right)  -2\mathbf{H}\left(  \frac{\partial\varphi
}{\partial x}\right)  \left(  \frac{\partial y}{\partial x}\right)  \left(
\rho\right)  .
\]
Due to Lemma \ref{Lemma derivatives in LL}, $\frac{\partial^{2}\varphi_{0}%
^{-}}{\partial x^{2}}\in L_{\infty}\left(  \mathbb{R}\right)  \cap
L_{2}\left(  \mathbb{R}\right)  $ for $\alpha\geq1$, and since%
\[
\mathbf{H}\left(  \frac{\partial^{2}\varphi}{\partial x^{2}}\right)
=\mathbf{H}\left(  \frac{\partial^{2}\varphi_{0}^{-}}{\partial x^{2}}\right)
T\left(  T_{+}\right)  ,
\]
the first two terms on the right-hand side belong to $L_{2}\left(
\mathbb{R}\right)  $. Due to (\ref{dy in L2}) and the inclusion $\frac
{\partial\varphi_{0}^{-}}{\partial x}\in L_{\infty}\left(  \mathbb{R}\right)
$ the third term belongs to the same space as well. Thus, the right-hand side
here belongs to $L_{2}\left(  \mathbb{R}\right)  $, and hence $\frac
{\partial^{2}y\left(  \rho,x\right)  }{\partial x^{2}}\in L_{2}\left(
\mathbb{R}\right)  $. The case $\ k>2$ is considered analogously.
\end{proof}

\bigskip

\section{Numerical realization of ISTM}

\subsection{Direct scattering\label{SubsectDirectScattering}}

Given a potential $q(x)$, the sets of the coefficients $\left\{
a_{n}\right\}  $, $\left\{  b_{n}\right\}  $, $\left\{  c_{n}\right\}  $ and
$\left\{  d_{n}\right\}  $ are computed following the recurrent integration
procedure from Appendix. The first required for this procedure functions
$e(\frac{i}{2},x)$ and $g(\frac{i}{2},x)$ and their first derivatives can be
computed using any available method for numerical solution of an approximate
Cauchy problem for the equation
\begin{equation}
-y^{\prime\prime}+q(x)y+\frac{1}{4}y=0. \label{Schr1/4}%
\end{equation}
In particular, we used the SPPS (spectral parameter power series) method from
\cite{KrPorter2010} (see also \cite{APFT} and \cite{KrBook2020}). To find an
approximation $\widetilde{e}\left(  \frac{i}{2},x\right)  $ of $e\left(
\frac{i}{2},x\right)  $ one can compute a solution of the Cauchy problem for
(\ref{Schr1/4}) on a sufficiently large interval $\left(  0,b\right)  $ with
the initial conditions
\[
\widetilde{e}\left(  \frac{i}{2},b\right)  =e^{-\frac{b}{2}}\quad
\text{and}\quad\widetilde{e}\,^{\prime}\left(  \frac{i}{2},b\right)
=-\frac{e^{-\frac{b}{2}}}{2}%
\]
which follow from the asymptotics of the Jost solution.

Next, $\eta(x)$ is computed by (\ref{Abel formula}), see Appendix. Analogously
$g(\frac{i}{2},x)$ and $\xi(x)$ (see Appendix) are computed, and a number of
the coefficients $\left\{  a_{n}\right\}  $, $\left\{  b_{n}\right\}  $,
$\left\{  c_{n}\right\}  $ and $\left\{  d_{n}\right\}  $ is computed
following the recurrent integration procedure from Appendix. For the recurrent
integration we use the Newton-Cottes 6-point integration rule. Having computed
the sets of the coefficients, the scattering data are computed with the aid of
the corresponding formulas from Section
\ref{Sect Representations scattering data}.

\subsection{Inverse scattering\label{SubsectInverseScattering}}

The following numerical method for solving the inverse scattering problem can
be proposed.

\begin{enumerate}
\item Given a set of scattering data $J^{+}$ or $J^{-}$. Choose a number of
equations $N_{s}$, so that the truncated system
\begin{equation}
a_{m}(x)+\sum_{n=0}^{N_{s}}a_{n}(x)A_{mn}(x)=r_{m}(x),\quad m=0,\ldots,N_{s}
\label{truncated system A}%
\end{equation}
or
\begin{equation}
b_{m}(x)+\sum_{n=0}^{N_{s}}b_{n}(x)B_{mn}(x)=s_{m}(x),\quad m=0,\ldots,N_{s}
\label{truncated system B}%
\end{equation}
is to be solved.

\item Compute $r_{m}(x)$ and $A_{mn}(x)$ or $s_{m}(x)$ and $B_{mn}(x)$
according to the formulas from the preceding section.

\item Solve the system (\ref{truncated system A}) or (\ref{truncated system B}%
) to find $a_{0}(x)$ or $b_{0}(x)$, respectively.

\item Compute $q$ with the aid of (\ref{q=a}) or (\ref{q=b})
\end{enumerate}

Let us discuss some relevant aspects of this numerical approach.

The first question is regarding an appropriate way for computing the integrals
in (\ref{alpha mn})-(\ref{s m}). Notice that they are Fourier transforms of
the reflection coefficients multiplied by fractions of a special form. Here it
would be interesting to apply some of the available techniques for numerical
computation of the Fourier transform. However, the special form of the
fractional factors suggests another possibility for computing the integrals,
which proved to provide good results.

The integral we are interested in computing has the form%
\[
I(x)=\int_{-\infty}^{\infty}s\left(  \rho\right)  e^{2i\rho x}\frac{\left(
\frac{1}{2}+i\rho\right)  ^{m}}{\left(  \frac{1}{2}-i\rho\right)  ^{n}}d\rho
\]
where $n-m\geq1$. Let $z:=\frac{\frac{1}{2}+i\rho}{\frac{1}{2}-i\rho}$. Then,
when $\rho$ runs along the real axis $(-\infty,\infty)$, the variable $z$ runs
along the unitary circle, so that $z=e^{i\theta}$ with $\theta\in(-\pi,\pi)$.
Thus, the change of the integration variable has the form $e^{i\theta}%
=\frac{\frac{1}{2}+i\rho}{\frac{1}{2}-i\rho}$. Then $e^{i\theta}d\theta
=\frac{d\rho}{\left(  \frac{1}{2}-i\rho\right)  ^{2}}$, $2i\rho=\frac
{e^{i\theta}-1}{e^{i\theta}+1}$ and $\frac{1}{\frac{1}{2}-i\rho}=e^{i\theta
}+1$. Hence the integral $I(x)$ can be written as
\begin{equation}
I(x)=\int_{-\pi}^{\pi}s\left(  \frac{i(1-e^{i\theta})}{2(1+e^{i\theta}%
)}\right)  \exp\left(  \frac{e^{i\theta}-1}{e^{i\theta}+1}x\right)
e^{i\theta(m+1)}\left(  e^{i\theta}+1\right)  ^{n-m-2}d\theta. \label{Ix}%
\end{equation}
All the integrals involved were computed using formula (\ref{Ix}) with the aid
of the Matlab routine `trapz', evaluating the integrand in $N_{i}=10^{4}$
points, uniformly distributed on the interval $(-\pi,\pi)$.

Finally, on the last step, for recovering $q$ with the aid of (\ref{q=a}) or
(\ref{q=b}), the computed coefficient $a_{0}$ (or $b_{0}$) needs to be
differentiated twice. This was performed by representing the computed
coefficient in the form of a spline with the aid of the Matlab routine `spapi'
with a posterior differentiation with the Matlab command `fnder'.

\subsection{Numerical examples}

\begin{example}
Consider the Cauchy problem for the KdV equation (\ref{KdV}) with the initial
data
\begin{equation}
q(x)=xe^{-x^{2}}. \label{q ex1}%
\end{equation}
On Figure \ref{FigLamb} we reproduce the results from one of the first books
on the subject \cite{Lamb}, where on p. 117 the reader can find a similar
figure with one important difference. The graphs from \cite{Lamb} contain some
additional oscillations at the ends of the interval depicted which as
explained by the author is due to imposition of (artificial) periodic boundary
conditions. Application of our method does not require imposing any artificial
boundary conditions.%

\begin{figure}
[ptb]
\centering
\includegraphics[
height=3in,
width=5in
]
{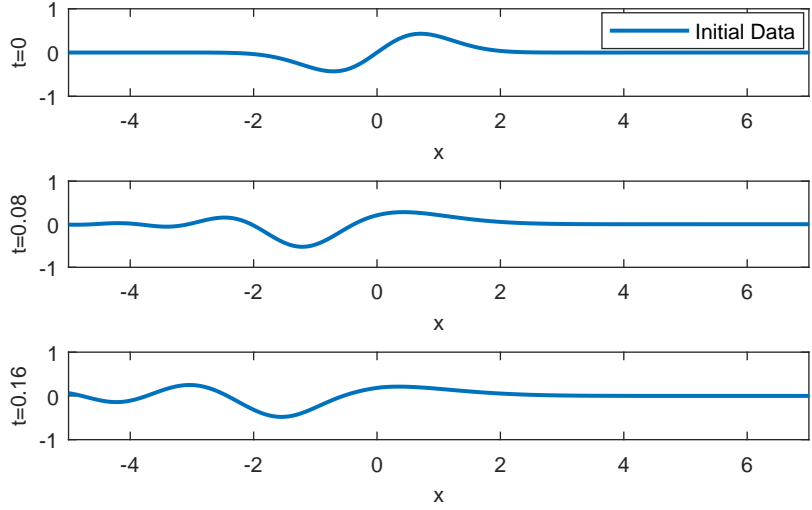}%
\caption{Solution of the Cauchy problem for (\ref{KdV}) with the initial data
(\ref{q ex1}).}%
\label{FigLamb}%
\end{figure}

The potential (\ref{q ex1}) has one eigenvalue which was computed numerically,
$\lambda_{1}\approx-0.0138384593995$ with the corresponding norming constants
$\alpha_{1}^{-}\approx0.2055954681199$ and $\alpha_{1}^{+}\approx
0.0416040800785$. The reflection coefficients $s^{-}(\rho)$ and $s^{+}(\rho)$
where computed as explained in subsection \ref{SubsectDirectScattering} on the
interval $\left(  -500,500\right)  $. This computation of the scattering data
took less than a second in Matlab2017 on a Laptop computer equipped with an
Intel Core i7 processor.

The inverse scattering problems were solved as explained in subsection
\ref{SubsectInverseScattering} with five equations in the truncated systems
(\ref{truncated system A}) used for $x<0$ and (\ref{truncated system B}) for
$x>0$. The elapsed time to obtain Figure \ref{FigLamb} was 9 sec.

Notice that after solving the direct scattering problem and the inverse
scattering problem for $t=0$ the potential (\ref{q ex1}) on the interval
$\left(  -5,7\right)  $ was recovered with the absolute error of $1.5\cdot%
10^{-3}$.
\end{example}

\begin{example}
Consider the Cauchy problem for the KdV equation (\ref{KdV}) with the initial
data in the form of the reflectionless potential
\begin{equation}
q(x)=-\frac{c}{2}\operatorname*{sech}{}^{2}\left(  \frac{\sqrt{c}x}{2}\right)
. \label{q ex3}%
\end{equation}
The corresponding solution is the solitary wave
\[
u(x,t)=-\frac{c}{2}\operatorname*{sech}{}^{2}\left(  \frac{\sqrt{c}\left(
x-ct\right)  }{2}\right)  .
\]

On Figure \ref{FigSoliton} the computed solution is presented for $c=\pi$ and
$t=0,0.5,1$. Numerical solution of the direct scattering problem gave us the
reflection coefficient equal to zero with the accuracy $2\cdot10^{-4}$, and
the only eigenvalue $\lambda_{1}\approx-0.78539816329$, while its exact value
is $-c/4=-0.78539816339\ldots$ (the difference in the 10-th digit). The
corresponding norming constants are $\alpha_{1}^{\pm}=\sqrt{\pi}$ which were
computed with the accuracy $2.9\cdot10^{-5}$. The solutions of the inverse
scattering problems were obtained with five equations in the truncated systems
(\ref{truncated system A}) used for $x<0$ and (\ref{truncated system B}) for
$x>0$.
%TCIMACRO{\FRAME{ftbpFU}{3.2938in}{2.4697in}{0pt}{\Qcb{Solution of the Cauchy
%problem for (\ref{KdV}) with the initial data (\ref{q ex3}).}}%
%{\Qlb{FigSoliton}}{figsoliton.tif}{\special{ language "Scientific Word";
%type "GRAPHIC";  maintain-aspect-ratio TRUE;  display "USEDEF";
%valid_file "F";  width 3.2938in;  height 2.4697in;  depth 0pt;
%original-width 7.3211in;  original-height 5.4723in;  cropleft "0";
%croptop "1";  cropright "1";  cropbottom "0";
%filename 'FigSoliton.tif';file-properties "XNPEU";}} }%
%BeginExpansion
\begin{figure}
[ptb]
\centering
\includegraphics[
height=3in,
width=5in
]%
{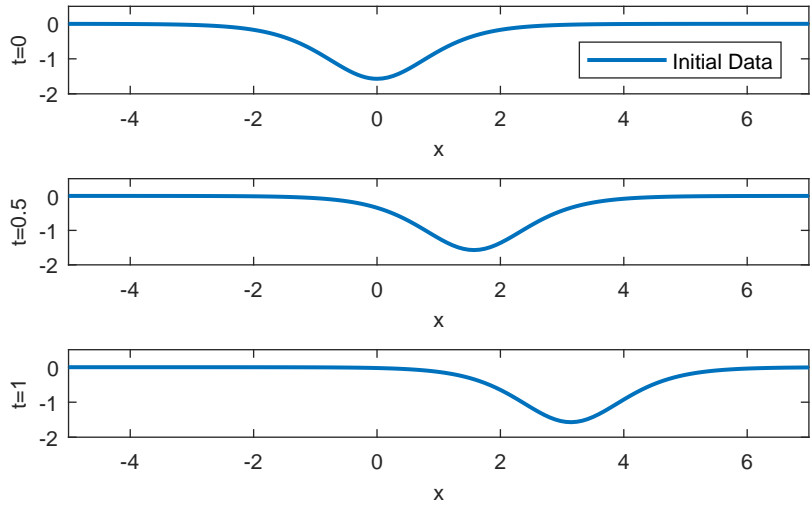}%
\caption{Solution of the Cauchy problem for (\ref{KdV}) with the initial data
(\ref{q ex3}).}%
\label{FigSoliton}%
\end{figure}
%EndExpansion

After solving the direct scattering problem and the inverse scattering problem
for $t=0$ the potential on the interval $\left(  -5,7\right)  $ was recovered
with the absolute error of $8\cdot%
10^{-4}$, and for subsequent times the error did not grow. For example, for
$t=1$ the absolute error was $2.4\cdot%
10^{-4}$.
\end{example}

\begin{example}
Consider the Cauchy problem for the KdV equation (\ref{KdV}) with the initial
data
\begin{equation}
q(x)=\left\{
\begin{array}
[c]{cc}%
e^{x}\cos4x, & x<0,\\
e^{-x}J_{0}(2x), & x>0,
\end{array}
\right.  \label{q ex2}%
\end{equation}
where $J_{0}(z)$ stands for the Bessel function of the first kind of order
zero. The graph of this potential is presented on Figure \ref{FigPridumannij}.
Its first derivative is discontinuous.
%TCIMACRO{\FRAME{ftbpFU}{3.2938in}{2.4697in}{0pt}{\Qcb{Graph of the potential
%(\ref{q ex2}).}}{\Qlb{FigPridumannij}}{figpridumannijpotential.tif}%
%{\special{ language "Scientific Word";  type "GRAPHIC";
%maintain-aspect-ratio TRUE;  display "USEDEF";  valid_file "F";
%width 3.2938in;  height 2.4697in;  depth 0pt;  original-width 7.3211in;
%original-height 5.4723in;  cropleft "0";  croptop "1";  cropright "1";
%cropbottom "0";
%filename 'FigPridumannijPotential.tif';file-properties "XNPEU";}%
%} }%
%BeginExpansion
\begin{figure}
[ptb]
\centering
\includegraphics[
height=2.4in,
width=4in
]%
{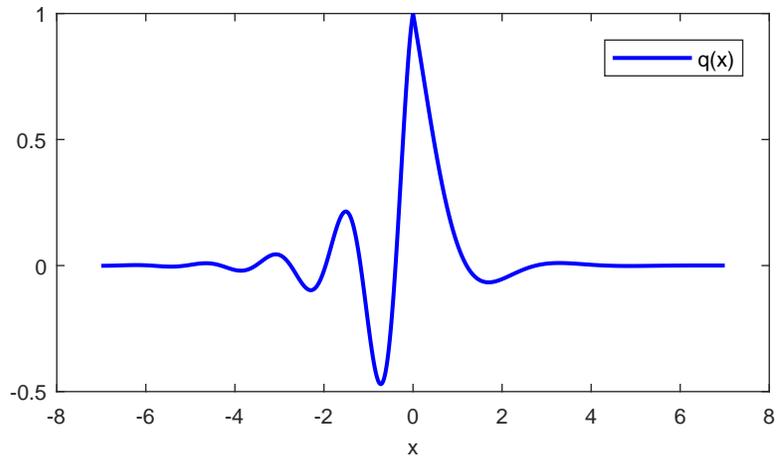}%
\caption{Graph of the potential (\ref{q ex2}).}%
\label{FigPridumannij}%
\end{figure}
%EndExpansion

On Figure the corresponding solution of the Cauchy problem for (\ref{KdV}) is
presented. It was obtained with nine equations in the truncated systems (with
five equations it was only slightly less accurate). After solving the direct
scattering problem and the inverse scattering problem for $t=0$ the potential
(\ref{q ex2}) on the interval $\left(  -7,7\right)  $ was recovered with the
absolute error of $6\cdot10^{-3}$.%

%TCIMACRO{\FRAME{ftbpFU}{3.2938in}{2.4697in}{0pt}{\Qcb{Solution of the Cauchy
%problem for (\ref{KdV}) with the initial data (\ref{q ex2}).}}%
%{\Qlb{FigPridumannijEvolution}}{figpridumannij.tif}%
%{\special{ language "Scientific Word";  type "GRAPHIC";
%maintain-aspect-ratio TRUE;  display "USEDEF";  valid_file "F";
%width 3.2938in;  height 2.4697in;  depth 0pt;  original-width 7.3211in;
%original-height 5.4723in;  cropleft "0";  croptop "1";  cropright "1";
%cropbottom "0";
%filename 'FigPridumannij.tif';file-properties "XNPEU";}} }%
%BeginExpansion
\begin{figure}
[ptb]
\begin{center}
\includegraphics[
height=3in,
width=5in
]%
{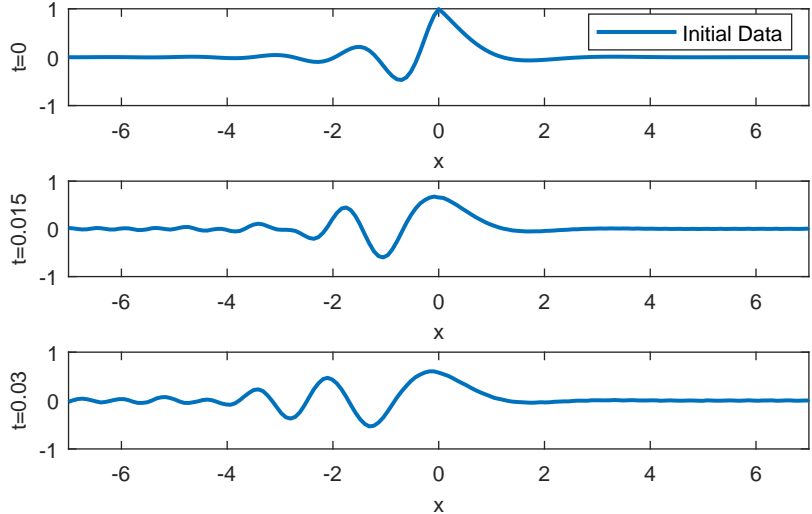}%
\caption{Solution of the Cauchy problem for (\ref{KdV}) with the initial data
(\ref{q ex2}).}%
\label{FigPridumannijEvolution}%
\end{center}
\end{figure}
%EndExpansion

\end{example}

\section*{Conclusions}

A method for practical realization of the inverse scattering transform method
for the Korteweg--de Vries equation is proposed. It is accurate, efficient
and relatively simple for implementation. The applicability of the finite
section method to the system of linear algebraic equations, which arises when
solving the inverse scattering problem, is proved. Numerical examples are
provided. Similar approach can be derived for other integrable non-linear
equations, such as the nonlinear Schr\"{o}dinger equation.

\section*{Acknowledgements}
Research of Vladislav Kravchenko and Sergii Torba was supported by CONACYT,
Mexico via the project 284470. Research of Sergei Grudsky was supported by
CONACYT, Mexico via the project \textquotedblleft Ciencia de
Frontera\textquotedblright\ FORDECYT-PRONACES/61517/2020. Research of Sergei
Grudsky and Vladislav Kravchenko was performed at the Regional mathematical
center of the Southern Federal University with the support of the Ministry of
Science and Higher Education of Russia, agreement  075--02--2022--893.

\appendix\section{Recurrent integration procedure for the
coefficients of the representations}

Here we recall the result from \cite{DelgadoKhmelnytskayaKrHalfline}
concerning the coefficients $\left\{  a_{n}\right\}  $ and $\left\{
d_{n}\right\}  $ and obtain by analogy similar relations for the coefficients
$\left\{  b_{n}\right\}  $ and $\left\{  c_{n}\right\}  $. Denote%

\begin{equation}
\eta(x):=e(\frac{i}{2},x)\int_{0}^{x}\frac{dt}{e^{2}(\frac{i}{2},t)}
\label{Abel formula}%
\end{equation}
and
\begin{equation}
\xi(x):=g(\frac{i}{2},x)\int_{x}^{0}\frac{dt}{g^{2}(\frac{i}{2},t)}.
\label{Abel formula 2}%
\end{equation}
These solutions of (\ref{Schr}) with $\lambda=-1/4$ satisfy the asymptotic
relations $\eta(x)=e^{\frac{x}{2}}\left(  1+o(1)\right)  $, $x\rightarrow
\infty$ and $\xi(x)=e^{-\frac{x}{2}}\left(  1+o(1)\right)  $, $x\rightarrow
-\infty$, respectively. Computation of all coefficients can be performed
acording to the following steps.

\begin{enumerate}
\item The coefficients $a_{0}$, $b_{0}$, $c_{0}$ and $d_{0}$ have the form%
\begin{align}
a_{0}(x)&=e(\frac{i}{2},x)e^{\frac{x}{2}}-1\quad\text{and}\quad b_{0}%
(x)=g(\frac{i}{2},x)e^{-\frac{x}{2}}-1,\nonumber\\
d_{0}(x)&=a_{0}^{\prime}(x)-\frac{a_{0}(x)}{2}+\frac{1}{2}\int_{x}^{\infty
}q(t)dt, \label{d0}\\
c_{0}(x)&=b_{0}^{\prime}(x)+\frac{b_{0}(x)}{2}-\frac{1}{2}\int_{-\infty}%
^{x}q(t)dt\nonumber
\end{align}

\item The subsequent coefficients $a_{n}$ and $b_{n}$ together with their
derivatives can be computed as follows
\begin{align}
a_{n}(x)&=a_{0}(x)-2e^{\frac{x}{2}}\left(  \eta(x)J_{1,n}(x)-e(\frac{i}%
{2},x)J_{2,n}(x)\right)  ,\nonumber\\
b_{n}(x)&=b_{0}(x)+2e^{-\frac{x}{2}}\left(  \xi(x)I_{1,n}(x)-g(\frac{i}%
{2},x)I_{2,n}(x)\right)  ,\nonumber\\
a_{n}^{\prime}(x)  &  =a_{0}^{\prime}(x)-2\left(  e^{\frac{x}{2}}%
\eta(x)\right)  ^{\prime}J_{1,n}(x)-2e^{\frac{x}{2}}\eta(x)J_{1,n}^{\prime
}(x)\nonumber\\
&  +2\left(  e^{\frac{x}{2}}e(\frac{i}{2},x)\right)  ^{\prime}J_{2,n}%
(x)+2e^{\frac{x}{2}}e(\frac{i}{2},x)J_{2,n}^{\prime}(x), \label{an prime}\\
b_{n}^{\prime}(x)  &  =b_{0}^{\prime}(x)+2\left(  e^{-\frac{x}{2}}%
\xi(x)\right)  ^{\prime}I_{1,n}(x)+2e^{-\frac{x}{2}}\xi(x)I_{1,n}^{\prime
}(x)\nonumber\\
&  -2\left(  e^{-\frac{x}{2}}g(\frac{i}{2},x)\right)  ^{\prime}I_{2,n}%
(x)-2e^{-\frac{x}{2}}g(\frac{i}{2},x)I_{2,n}^{\prime}(x),\nonumber
\end{align}
where
\begin{equation}
J_{1,n}(x)=J_{1,n-1}(x)-e^{-\frac{x}{2}}e(\frac{i}{2},x)a_{n-1}(x)-\int
_{x}^{\infty}\left(  e(\frac{i}{2},t)e^{-\frac{t}{2}}\right)  ^{\prime}%
a_{n-1}(t)dt, \label{J1 recurrent}%
\end{equation}
\begin{equation}
J_{2,n}(x)=J_{2,n-1}(x)-e^{-\frac{x}{2}}\eta(x)a_{n-1}(x)-\int_{x}^{\infty
}\left(  \eta(t)e^{-\frac{t}{2}}\right)  ^{\prime}a_{n-1}(t)dt
\label{J2 recurrent}%
\end{equation}
\begin{equation}
I_{1,n}(x)=I_{1,n-1}(x)+e^{\frac{x}{2}}g(\frac{i}{2},x)b_{n-1}(x)-\int
_{-\infty}^{x}\left(  g(\frac{i}{2},t)e^{\frac{t}{2}}\right)  ^{\prime}%
b_{n-1}(t)dt, \label{I1}%
\end{equation}
and
\begin{equation}
I_{2,n}(x)=I_{2,n-1}(x)+e^{\frac{x}{2}}\xi(x)b_{n-1}(x)-\int_{-\infty}%
^{x}\left(  \xi(t)e^{\frac{t}{2}}\right)  ^{\prime}b_{n-1}(t)dt. \label{I2}%
\end{equation}
($J_{1,0}(x)=J_{2,0}(x)=I_{1,0}(x)=I_{2,0}(x)\equiv0$), and for $J_{1,n}%
^{\prime}(x)$, $J_{2,n}^{\prime}(x)$, $I_{1,n}(x)$ and $I_{2,n}(x)$ the
following relations are obtained from (\ref{J1 recurrent})-(\ref{I2}),%
\begin{equation}
J_{1,n}^{\prime}(x)=J_{1,n-1}^{\prime}(x)-e^{-\frac{x}{2}}e(\frac{i}%
{2},x)a_{n-1}^{\prime}(x), \label{J1 prime}%
\end{equation}
\begin{equation}
J_{2,n}^{\prime}(x)=J_{2,n-1}^{\prime}(x)-e^{-\frac{x}{2}}\eta(x)a_{n-1}%
^{\prime}(x), \label{J2 prime}%
\end{equation}%
\[
I_{1,n}^{\prime}(x)=I_{1,n-1}^{\prime}(x)+e^{\frac{x}{2}}g(\frac{i}%
{2},x)b_{n-1}^{\prime}(x)
\]
and%
\[
I_{2,n}^{\prime}(x)=I_{2,n-1}^{\prime}(x)+e^{\frac{x}{2}}\xi(x)b_{n-1}%
^{\prime}(x).
\]

\item The coefficients $d_{n}$ and $c_{n}$ can be computed with the aid of the
relations
\begin{equation}
d_{n+1}(x)=d_{n}(x)+a_{n+1}^{\prime}(x)-a_{n}^{\prime}(x)-\frac{1}{2}\left(
a_{n+1}(x)+a_{n}(x)\right)  ,\quad n=0,1,\ldots\label{dn recurrent}%
\end{equation}
and
\[
c_{n+1}(x)=c_{n}(x)+b_{n+1}^{\prime}(x)-b_{n}^{\prime}(x)+\frac{1}{2}\left(
b_{n+1}(x)+b_{n}(x)\right)  ,\quad n=0,1,\ldots.
\]

\end{enumerate}

\end{document}